\theoremstyle{plain}
\newtheorem*{theorem}{Theorem}
\newtheorem{thm}{Theorem}[section]
\newtheorem{lem}[thm]{Lemma}
\newtheorem{cor}{Corollary}[section]
\newtheorem{definition}[thm]{Definition}
\newtheorem{rmk}[thm]{Remark}
\numberwithin{equation}{section}
\def\a{\alpha}
\def\b{\beta}
\def\d{\Delta}
\def\e{\epsilon}
\def\g{\gamma}
\def\CD{${\rm CD}(K,N)$}
\def\CDS{${\rm CD}^{\ast}(K,N)$}
\def\G{Geo(X)}
\def\l{\lambda}
\def\M{\mathbf{M^n}}
\def\O{\Omega}
\def\P{\bm{\pi}}
\def\p2{\mathscr{P}_2(X)}
\def\PC{\mathscr{P}(C([0,1],X))}
\def\L{\rm{LIP(X)}}
\def\LC{\rm LIP_{0}(X)}
\def\R{\mathbb{R}}
\def\RCD{${\rm RCD}^{\ast}(K,N)$}
\def\RCDn{${\rm RCD}^{\ast}(-K,N)$}
\def\RCDI{${\rm RCD}(K,\infty)$}
\def\RCDO{${\rm RCD}^{\ast}(0,N)$}
\def\W{W^{1,2}}
\def\X{$(X,d,\mu)$}
\begin{document}

\title[Semilinear elliptic equations on metric measure spaces]
{gradient estimate and Universal bounds for semilinear elliptic equations on RCD$^*$(K,N)\\ metric measure spaces}

\author[Zhihao Lu]{Zhihao Lu}
\address[Zhihao Lu]{School of Mathematics and Statistics, Jiangsu Normal University, Xuzhou 221116, P. R. China}
\email{zhihaolu@jsnu.edu.cn}

\begin{abstract}
We derive logarithmic gradient estimate and universal boundedness estimate for semilinear elliptic equations on \RCD\, metric measure spaces, which contains the class of Riemannian manifolds with Ricci curvature bounded below. These estimates are applicable for equations satisfying subcritical index condition,which recover many classical results even on Euclidean spaces. In certain case, these estimates are optimal even on \RCD\,\,spaces with $K<0$. Two direct corollaries of these estimates are Harnack inequality and Liouville theorem. In addition to these estimates, we also establish fundamental relations among the universal boundedness estimate, the logarithmic gradient estimate, and Harnack inequality. Under certain and wild assumptions for the nonlinear term, we prove that these estimates are $\kappa$-equivalent on \RCDO\,spaces for any $\kappa>1$.% Even in the case of equations on Euclidean spaces, the method presented here is also new.
\end{abstract}

\keywords{Logarithmic gradient estimate, Universal boundedness estimate, Harnack inequality, Liouville theorem, \RCD\,metric measure space}

\subjclass[2020]{Primary: 53C23, 35B09; Secondary:  35B45, 35B53}
 
\thanks{School of Mathematics and Statistics, Jiangsu Normal University, Xuzhou 221116, P. R. China}

%\thanks{This paper was typeset using \AmS-\LaTeX}

\maketitle
{\tableofcontents}
\section{Introduction}

\subsection{Motivation}
The following Lane-Emden equation ($\a\in\mathbb{R}$)
\begin{equation}\label{LE}
	\Delta u+u^{\alpha}=0
\end{equation}
is an important model of semilinear elliptic equations. In 1981, Gidas-Spruck \cite{GS} got the following breakthrough for this equation under subcritical index condition. 
\begin{theorem}\cite[Theorem 3.1,Theorem 6.1]{GS}
	Let $(\M,g)$ {\rm ($n\ge3$)} be an n-dimensional Riemannian manifold and $u$ be a non-negative $C^2$ solution of \eqref{LE} in $\O\subset\M$ with $\a\in(1,\frac{n+2}{n-2})$, then\\
	{\rm(I) (Liouville theorem)}  if $Ricci(\M)\ge 0$ and $\O=\M$, we have $u\equiv 0$.\\
	{\rm(II) (Singular decay estimate)}  if $\M=\mathbb{R}^n$ and $\O=B(0,R)\setminus\{0\}$, we have
	\begin{equation}
		u(x)\le c|x|^{-\frac{2}{\a-1}},\qquad\text{as}\quad x\to 0.
	\end{equation}\\
{\rm(III) (Harnack inequality)}  if $\M=\mathbb{R}^n$ and $\O=B(0,R)\setminus\{0\}$, then there exists $\theta_0$ such that for any $0<\theta\le\theta_0$ and $0<\e\le\frac{1}{2}R$,
\begin{equation}
	\sup\limits_{\e\le|x|\le (1+\theta)\e}u(x)\le c\inf\limits_{\e\le|x|\le (1+\theta)\e}u(x),
\end{equation}
where $c$ is independent of $u$, $\e$, $\theta$.
\end{theorem}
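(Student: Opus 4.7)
The plan is to follow the Gidas-Spruck strategy: combine a Bochner identity with a one-parameter family of carefully tuned integral test functions, and exploit the subcritical condition $\a<\frac{n+2}{n-2}$ through the Sobolev inequality in order to close the estimates. The starting point is to work with $v=u^{\beta}$ for a parameter $\beta$ to be chosen, and to combine $\Delta u=-u^{\a}$ with the Bochner formula
\[
\Delta|\nabla u|^2=2|\nabla^2 u|^2+2\langle\nabla\Delta u,\nabla u\rangle+2\,\mathrm{Ric}(\nabla u,\nabla u),
\]
which produces a differential inequality for $v$ where the curvature term has a favorable sign under $\mathrm{Ric}(\M)\ge 0$.

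For {\rm(I)}, I would multiply a suitable combination of $u^p$ and $|\nabla u|^{2q}$ by a cutoff $\eta$ supported in $B(x_0,2R)$ with $\eta\equiv 1$ on $B(x_0,R)$, integrate by parts, and calibrate $p,q$ so that the cutoff error vanishes as $R\to\infty$. The condition $\a<\frac{n+2}{n-2}$ is precisely what permits the Sobolev inequality on $\M$ (available since $\mathrm{Ric}\ge 0$) to reabsorb the nonlinear contribution into the Dirichlet energy, yielding $\int_{B(x_0,R)}u^{q(\a+1)}\le CR^{-s}$ for some $s>0$; letting $R\to\infty$ forces $u\equiv 0$.

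For {\rm(II)}, I would exploit the scaling invariance $u\mapsto u_r(y):=r^{2/(\a-1)}u(ry)$ together with a blow-up argument: if the singular decay estimate failed, one would extract $r_k\to 0$ so that $u_{r_k}$ converges locally to a nontrivial nonnegative solution on $\R^n\setminus\{0\}$, and subcriticality would permit a removable-singularity argument extending it to $\R^n$, contradicting {\rm(I)}. For {\rm(III)}, once {\rm(II)} is established, one rewrites the equation as $\Delta u+Vu=0$ with $V:=u^{\a-1}$ uniformly bounded on the annulus $\{\e\le|x|\le(1+\theta)\e\}$ after rescaling, and applies the classical Moser--Harnack inequality for linear elliptic operators with bounded potential.

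The main obstacle is the first step: one must simultaneously calibrate $(p,q,\beta)$ so that the Bochner term, the nonlinear feedback from $u^{\a}$, and the cutoff error balance and produce a nonnegative quadratic form after integration. This is precisely where the subcritical threshold $\frac{n+2}{n-2}$ is sharp---crossing it destroys the sign of the resulting form and breaks the integral identity, which is also why the conclusions fail in the critical and supercritical regimes.
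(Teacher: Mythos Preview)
This theorem is quoted from \cite{GS} as motivational background; the present paper does not give a direct proof of it. Rather, the paper's own Theorem~\ref{sthm} (combined with the implications of Section~6) recovers these conclusions via a genuinely different method. Your sketch follows the classical Gidas--Spruck integral-estimate philosophy: integrate a Bochner identity against tuned powers and cutoffs, and close via a Sobolev-type absorption. The paper instead runs a \emph{pointwise} Bernstein argument: it builds the auxiliary quantity $F$ in \eqref{F} with $w=u^{-\beta}$, derives an elliptic inequality of the form $\Delta F\ge -2KF+c\,\langle\nabla F,\nabla\ln w\rangle+LF^{2}$ from Bochner (Lemmas~\ref{k1}, \ref{k3}, \ref{ab}), and then applies the weak maximum principle of Lemma~\ref{MMP} to $\Phi F$ with $\Phi$ the cutoff of Lemma~\ref{cut off}. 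This yields the quantitative local bound \eqref{strong0} directly, from which (I) follows by sending $R\to\infty$ with $K=0$, (II) is already the universal decay, and (III) comes from Lemma~\ref{logtohk}. The payoff of the paper's route is that no integral Sobolev inequality is ever invoked, which is exactly what allows the argument to transfer to \RCD\ spaces.

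There is a genuine gap in your outline for (I): the claim that ``the Sobolev inequality on $\M$ is available since $\mathrm{Ric}\ge 0$'' is false in general---nonnegative Ricci curvature alone does not yield a global Euclidean-type Sobolev inequality (one also needs maximal volume growth; think of a cylinder). Gidas--Spruck's actual argument on manifolds does not rest on such an inequality, and your sketch as written would not close on a general $\M$ with $\mathrm{Ric}\ge 0$. Separately, your blow-up route for (II) is not the original Gidas--Spruck proof (they obtain it via integral estimates as well); what you describe is closer to the later Pol\'a\v{c}ik--Quittner--Souplet framework that the paper explicitly contrasts with.
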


To our knowledge, other than their original proof based on integral estimates (also called integral Bernstein method in \cite{BGV}), there is essentially no novel proof available for their Liouville theorem (I). If one assumes the domain manifold as $\mathbb{R}^n$, then other methods can be found to reprove it, such as the moving plane or moving sphere methods (see \cite{CL,D1,LZ} and references therein). For $\alpha\in(0,1]$, Mitidieri–Pohozaev \cite{MP98} obtained the Liouville theorem even for quasilinear elliptic inequalities.
  In 1991, using integral estimates as well, Bidaut-Véron–Véron \cite{BV} obtained the classification result for semilinear equations on compact manifolds. Except for the Lane–Emden equation, Gidas and Spruck \cite{GS} also generalized the above results to the following semilinear equation
\begin{equation}\label{GE}
	\Delta u+f(u)=0
\end{equation}
with some additional conditions for the nonlinearity
$f$.
Until 1998, Dancer \cite{D2} found that the estimate in part {\rm (II)} of above theorem is universal\footnote{The word "universal" came from Serrin-Zou \cite{SZ}, which means that the estimate is not only independent of any given solution but also do not require any boundry conditions whatsoever. } . Concretely, he got%(notice that Gidas-Spruck's estimate {\rm (II) is sharp hence the following Dancer's estimate is also})
\begin{theorem}\cite[Lemma 1]{D2}
	Let $u$ be a  non-negative $C^2$ solution of \eqref{LE} in a domain $\O\subsetneqq\mathbb{R}^n$, with $\a\in(1,\frac{n+2}{n-2})$ and $n\ge 3$. Then 
	\begin{equation}
		u(x)\le c(n,\a)[dist(x,\partial\O)]^{-\frac{2}{\a-1}}.
	\end{equation}
\end{theorem}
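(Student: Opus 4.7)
The plan is to argue by contradiction, reducing Dancer's universal bound to the Liouville theorem (part (I) of the Gidas--Spruck theorem above) via a blow-up/rescaling argument. Suppose the claimed inequality fails. Then one can produce sequences of domains $\Omega_k\subsetneqq\mathbb{R}^n$, non-negative $C^2$ solutions $u_k$ of \eqref{LE}, and points $x_k\in\Omega_k$ such that
\[
    M_k(x_k)\cdot \mathrm{dist}(x_k,\partial\Omega_k)\to\infty,\qquad\text{where}\quad M_k(x):=u_k(x)^{(\alpha-1)/2}.
\]
The scaling exponent $2/(\alpha-1)$ is chosen precisely so that $M(x)\cdot\mathrm{dist}(x,\partial\Omega)$ is invariant under the natural rescaling $u\mapsto \lambda^{2/(\alpha-1)}u(\lambda\,\cdot)$ of \eqref{LE}; this is the algebraic reason one expects such a bound to exist.

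The first main step is to replace $x_k$ by a better-chosen point via a doubling-type selection argument (in the spirit of Polacik--Quittner--Souplet): one finds $\tilde x_k\in\Omega_k$ with $M_k(\tilde x_k)\geq M_k(x_k)$, still satisfying $M_k(\tilde x_k)\cdot\mathrm{dist}(\tilde x_k,\partial\Omega_k)\to\infty$, and in addition
\[
    M_k(y)\leq 2\,M_k(\tilde x_k)\qquad\text{for all }y\in B\!\bigl(\tilde x_k,\,R_k/M_k(\tilde x_k)\bigr),
\]
for some sequence $R_k\to\infty$. This is the technical heart of the proof: without it the blow-up balls would not expand to fill $\mathbb{R}^n$, and one would get only a local bound rather than a global limiting solution. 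I expect this selection step to be the main obstacle, because it has to simultaneously preserve the divergence of $M_k\cdot\mathrm{dist}(\cdot,\partial\Omega_k)$ and install a uniform control of $u_k$ on a rescaled ball whose radius (in blown-up coordinates) tends to $\infty$.

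Next I would perform the blow-up: set $\lambda_k:=M_k(\tilde x_k)^{-1}$ and
\[
    v_k(y):=\lambda_k^{2/(\alpha-1)}\,u_k(\tilde x_k+\lambda_k y).
\]
By the scaling invariance of \eqref{LE}, each $v_k$ solves $\Delta v_k+v_k^\alpha=0$ on a ball $B(0,R_k)$, with $v_k(0)=1$ and $v_k\leq 2^{2/(\alpha-1)}$ on $B(0,R_k)$. Standard interior elliptic regularity (the right-hand side is uniformly bounded, so $v_k$ is locally bounded in $C^{2,\beta}$) then yields a subsequence converging in $C^2_{\mathrm{loc}}(\mathbb{R}^n)$ to a non-negative $C^2$ solution $v$ of $\Delta v+v^\alpha=0$ on all of $\mathbb{R}^n$ with $v(0)=1$.

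Finally, the Gidas--Spruck Liouville theorem (part (I) in the excerpt, applied with $\mathbf{M}^n=\mathbb{R}^n$, which has $\mathrm{Ricci}\geq 0$, and with $\alpha\in(1,(n+2)/(n-2))$) forces $v\equiv 0$, contradicting $v(0)=1$. This contradiction establishes the desired universal bound, with the constant $c(n,\alpha)$ emerging abstractly from the contradiction scheme (no explicit value) and depending only on the ingredients of the doubling selection and the Liouville theorem.
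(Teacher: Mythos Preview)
Your blow-up argument is correct and is essentially the standard route to Dancer's bound (indeed it is the scheme later formalized by Pol\'a\v{c}ik--Quittner--Souplet \cite{PQS}). However, the paper does not give its own proof of this statement: it is quoted from \cite{D2} as background. What the paper does instead is prove Theorem~\ref{sthm}, which on $\mathbb{R}^n$ (so $K=0$) specializes to
\[
\sup_{B(x_0,R)}u^{\alpha-1}\le \frac{C(n,\alpha)}{R^2},
\]
and taking $R=\mathrm{dist}(x,\partial\Omega)$ recovers Dancer's estimate as a corollary.

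The methods are genuinely different. Your approach reduces the universal bound to the Gidas--Spruck Liouville theorem via rescaling and compactness; the constant $c(n,\alpha)$ is produced abstractly by contradiction and carries no explicit form. The paper's route is a direct Bernstein-type argument: one builds an auxiliary function $F$ out of $|\nabla u|^2/u^2$ and $u^{\alpha-1}$, derives a differential inequality $\Delta F\ge -2KF + \text{(drift)} + LF^2$ from Bochner's inequality (Lemmas~\ref{k1}, \ref{k2}, \ref{ab}, \ref{ac}), and then applies a cut-off plus weak maximum principle (Lemmas~\ref{cut off}, \ref{MMP}). This buys two things your argument does not: an explicit, trackable dependence of the constant on $N$ and $\alpha$, and portability to ${\rm RCD}^*(K,N)$ spaces where the blow-up/compactness machinery and the Liouville input are unavailable. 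The paper in fact singles out precisely this limitation of the blow-up method in the introduction. Conversely, your argument is shorter, requires no delicate coefficient analysis, and works for any nonlinearity for which the corresponding Liouville theorem is known.
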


%It is evident that the aforementioned universal boundedness estimate is stronger than Gidas-Spruck's Liouville theorem (in Euclidean case). 
Later, by establishing an important generalized Gidas–Spruck's identity,
  Serrin-Zou \cite{SZ} extended Gidas-Spruck's Liouville theorem (in Euclidean space), Harnack inequality, and Dancer's universal boundedness estimate to quasilinear elliptic equations. In 2007, using the celebrated doubling lemma and blow-up method, Poláčik--Quittner--Souplet \cite{PQS} obtained a universal boundedness estimate for solutions of general elliptic equations and systems under the assumptions of the Liouville theorems. Therefore, they put forth the perspective that the Liouville theorem is equivalent to the universal boundedness estimate, see also \cite{GS0,QS}.  For more quasilinear equations, even those with a gradient structure, one can also derive a priori estimates for solutions or their gradients via the direct Bernstein method; see Lions \cite{Lions}, Bidaut-Véron--García-Huidobro--Véron \cite{BGV,BGV20}, Filippucci--Sun--Zheng \cite{FSZ24}, and the references therein.

Now, let us briefly review their methods of obtaining universal bounds for \eqref{LE} (or \eqref{GE}). According to Serrin-Zou \cite{SZ}, the crucial Harnack inequality for equation \eqref{GE} can be derived using the well-known De Giorgi-Nash-Moser iteration method. This leads to obtaining universal bounds for positive solutions through Pohozaev's method. However, when attempting to generalize their results to equation \eqref{GE} on manifolds or metric measure spaces, the Harnack constant remains unclear. This lack of clarity significantly impacts the application of Pohozaev's method and fails to demonstrate how this bound depends on the spaces. Similarly, Poláčik-Quittner-Souplet's blow-up technique  also falls short in providing a clear bound on the aforementioned setting. Therefore, in order to obtain the universal boundedness estimate and Harnack inequality for equation \eqref{GE} on metric measure spaces, we need to explore new approaches.

Due to the aforementioned reasons, we outline our objectives as follows. In this paper, we establish a logarithmic gradient estimate and a universal boundedness estimate for equation \eqref{GE} on a metric measure space, which also implies the Harnack inequality. To achieve a clear understanding of the dependence, we require that the metric measure spaces are what is known as \RCD\, metric measure spaces (\RCD\,spaces for short). The theory of \RCD\,spaces has been developed by numerous experts over the past two decades. Before presenting our results, we would like to emphasize that Bochner's inequality is crucial for all proofs, and thus we select \RCD\,spaces as our domain spaces where Bochner's inequality holds. In the following paragraphs, we provide a brief overview of the development of the theory (related definitions can be found in Section 2).

In general, the lower bound of Ricci curvature plays a crucial role in determining various analytic and geometric properties of Riemannian manifolds. However, Riemannian manifolds with Ricci curvature bounded below are not stable under Gromov-Hausdorff convergence. To address this issue, Strum \cite{St1,St2} and Lott-Villani \cite{LV2} introduced the \emph{curvature dimensional condition} for metric measure spaces, which are Polish spaces with a positive Borel measure. For a given $K\in\R$ and $N\in[1,\infty]$, the \CD\,space represents the class of metric measure spaces that have a ``general Ricci curvature" bounded below by $K\in\R$ and a ``dimension" above by $N$. This notation aligns with the classical notation used in the smooth setting. Specially, a Riemannian manifold has dimension less or equal to $N$ and Ricci curvature greater or equal to $K$ if and only if it is a \CD\,space. The \CD\,space class is stable under measured Gromov-Hausdorff convergence and possesses several fundamental properties, such as Bishop-Gromov volume growth, Bonnet-Meyers diameter bound, the Lichnerowicz spectral gap, Brunn-Minkowski inequality, ect. On the other hand, to establish the validity of essential properties such as the local-to-global and tensorization, Bacher-Sturm \cite{BS} introduced the \emph{reduced curvature dimension condition}, denoted as \CDS, which shares the same favorable geometric features of \CD.  

However, both \CD\,and\,\,\CDS\, conditions include Finsler geometry, where the validity of the important Bochner's inequality  remains unknown. In order to exclude Finsler geometry, Ambrosio-Gigli-Savar\'{e} \cite{AGS1,AGS2,AGS3} made significant contributions by developing the \emph{Riemannian curvature dimension condition} \RCDI, which requires the space to be infinitesimally Hilbertian. The finite dimensional Riemannian curvature dimension condition, \RCD, was later introduced by Gigli \cite{G} and Erbar--Kuwada--Sturm \cite{EKS}. 
Relying in part on \cite{AGS1}, which treats the case $N = \infty$, the authors demonstrated that the \RCD\,condition implies Bochner's inequality $BE(K,N)$ and Bakry--Ledoux pointwise gradient estimate $BL(K,N)$, and these conditions are equivalent under certain mild regularity assumptions. Similar charaterizations were also studied in Ambrosio--Mondino--Savar\'{e} \cite{AMS} by a different approach. As a corollary of their results, the class of \RCD\,spaces contains (weighted) Riemannian manifolds with (Bakry-\'{E}mery) Ricci curvature $\ge K$ and dimension $\le N$ as well as $N$ dimensional Alexandrov spaces with generalized sectional curvature bounded below (see Zhang--Zhu \cite{ZZ1}). 

% Because the method presented here can deal with equation \eqref{GE} on \RCD\,spaces, it is also new for handling same equation on Euclidean spaces.

\subsection{Main results}
For any real number $N\in[1,\infty)$, we define Sobolev constant as follows
\begin{eqnarray}
	p_S(N)=\begin{cases}
	\infty    \quad\qquad\qquad\,\text{if}\quad N\in[1,2]\nonumber\\
	\frac{N+2}{N-2}\qquad\qquad\,\,\text{if}\quad N\in(2,\infty).\nonumber
	\end{cases}
\end{eqnarray}
For deriving logarithmic gradient estimate for more general semilinear elliptic equations such as \eqref{GE}, we also need to consider the following index:
\begin{eqnarray}
p(N)=\begin{cases}
\infty    \quad\qquad\qquad\,\text{if}\quad N=1\nonumber\\
\frac{N+3}{N-1}\qquad\qquad\,\,\text{if}\quad N\in(1,\infty).\nonumber
\end{cases}
\end{eqnarray}

Now, let us establish some conventions. We denote a metric measure space as $(X,d,\mu)$, where $(X,d)$ is a Polish space and $\mu$ is a non-negative Borel measure. Throughout this paper, we assume that the support of $\mu$ is equal to $X$, that $\mu$ is finite on bounded sets, and that $(X,d)$ is a proper metric space. We use $\L$ to represent  the space of Lipschitz functions and  $\LC$ to represent  the space of Lipschitz functions with compact support. Additionally, $\mathscr{P}(X)$ denotes the space of Borel probability measures on $(X,d)$. The notation $B(x,R)$ or $B_{R}(x)$ signfies an open ball with center $x$ and radius $R$. The notation $\chi_E$ refers to the characteristic function of the set $E$. The symbol $\sup_{E}g$ ($\inf_{E}g$) represents the essential supremum (infimum) of a measurable function $g$ over a measurable set $E$. The nonlinear term $f$ in equation \eqref{GE} is always assumed to be $C^2$ on $(0,\infty)$. Furthermore, all solutions discussed in the paper are weak solutions (as defined in Definition \ref{ws}). For a positive  function $f\in C^2(0,\infty)$, we define
\begin{equation}
	\l=\inf\limits_{t\in(0,\infty)}\frac{tf'(t)}{f},
\end{equation}
\begin{equation}
	\Lambda=\sup\limits_{t\in(0,\infty)}\frac{tf'(t)}{f},
\end{equation}
\begin{equation}
	\Pi=\inf\limits_{t\in(0,\infty)}\frac{t^2f''(t)}{f}.
\end{equation}

Now, we state our main results. First, for simplest Lane-Emden equation \eqref{LE}, we have the following optimal logarithmic gradient estimate and universal bounds, which recovers (I), (II) and (III) in Gidas and Spruck's theorems simultaneously.
\begin{thm}\label{sthm}
	\rm	Let $(X,d,\mu)$ be a \RCDn\, space with $N\in[1,\infty)$ and $K\ge 0$. If $u$ is a positive solution of \eqref{LE} on $B(x_0,2R)$ with $\a\in(-\infty,p_S(N))$, then 
	\begin{equation}\label{strong0}
		\sup\limits_{B(x_0,R)}\left(\frac{|\nabla u|^2}{u^2}+u^{\alpha-1}\right)\le C\left(K+\frac{1}{R^2}\right),
	\end{equation}	
	%{\rm (ii)}for $n\in[1,4)$ and $\a\in(2,p_S(n))$, we have
	where $C=C(N,\a)$.
	
\end{thm}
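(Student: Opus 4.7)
The plan is to derive Theorem \ref{sthm} as a direct consequence of Theorem \ref{main} or Theorem \ref{S}, depending on the size of $\alpha$, applied to the pure-power nonlinearity $f(t) = t^{\alpha}$. The key observation is that this $f$ is scale-invariant: a direct computation gives $tf'(t)/f(t) \equiv \alpha$ and $t^{2} f''(t)/f(t) \equiv \alpha(\alpha-1)$, so that $\lambda = \Lambda = \alpha$ and $\Pi = \alpha(\alpha-1)$ are constants. Consequently, every structural hypothesis on $f$ in the two master theorems collapses into a condition on the single parameter $\alpha$, and the constants depending on $(N,\Lambda,\Pi)$ or $(N,\lambda,\Lambda,\beta)$ can all be repackaged as $C(N,\alpha)$.

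First I would cover the range $\alpha \in (-\infty, p(N))$ by invoking Theorem \ref{main}. Positivity $f > 0$ on $(0,\infty)$ is immediate; the bound $\Lambda = \alpha < p(N)$ is precisely the hypothesis; and $\Pi = \alpha(\alpha-1) \in \mathbb{R}$ is trivially finite, so $\Pi > -\infty$. Applying \eqref{strong} and noting $f(u)/u = u^{\alpha-1}$ yields \eqref{strong0} in this range.

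Second, I would handle the remaining range $\alpha \in [p(N), p_S(N))$ via Theorem \ref{S}. Conditions (1)--(3) of Theorem \ref{main2} are again immediate; in particular $tf'(t)/f \equiv \alpha$ is constant and hence nondecreasing. Condition (4) holds since $\alpha \ge p(N)$ together with the elementary inequalities $p(N) > 1$ for $N \ge 4$ and $p(N) > 2$ for $N \in (1,4)$ (the case $N = 1$ is vacuous here, as $p(1) = \infty$). Condition (V1) is $\lim_{t \to 0^{+}} t^{\alpha-1} = 0$, which follows from $\alpha > 1$. For (V2), I rely on the author's remark that $1 + \rho(N,\Lambda) < \Lambda$ on $[p(N), p_S(N))$: fix any $\beta \in (\rho(N,\alpha),\, \alpha - 1)$, so that $t^{-1-\beta} f(t) = t^{\alpha - 1 - \beta}$ is a strictly increasing pure power; then $t^{\alpha-1-\beta} \le C\, \epsilon^{\alpha-1-\beta}$ rearranges to $t \le C^{1/(\alpha-1-\beta)} \epsilon$, which is $h$-inverse boundedness with $h(C) = C^{1/(\alpha-1-\beta)}$. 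Theorem \ref{S} then delivers \eqref{sss}, which for this $f$ is exactly \eqref{strong0}.

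I do not expect any serious obstacle: the two master theorems were designed so that the subcritical Lane--Emden equation lies squarely in their scope. The only delicate point is the existence of a valid exponent $\beta$ in (V2), which is guaranteed precisely by the inequality $1 + \rho(N,\alpha) < \alpha$ recorded just before the theorem statement. Everything else is bookkeeping of the constants and verifying that the two $\alpha$-ranges $(-\infty,p(N))$ and $[p(N),p_S(N))$ exhaust the hypothesis $\alpha < p_S(N)$.
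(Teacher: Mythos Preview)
Your proposal is correct and follows exactly the paper's own route: the author states just before Theorem \ref{sthm} that it follows ``immediately'' from Theorem \ref{main} (for $\alpha<p(N)$) and Theorem \ref{S} (for $\alpha\in[p(N),p_S(N))$), using the observation $1+\rho(N,\Lambda)<\Lambda$ on that range. Your verification of the hypotheses---in particular that $\lambda=\Lambda=\alpha$, $\Pi=\alpha(\alpha-1)$, condition (4) via $p(N)>2$ for $N\in(1,4)$ and $p(N)>1$ for $N\ge 4$, and the $h$-inverse boundedness with $h(C)=C^{1/(\alpha-1-\beta)}$---is exactly the bookkeeping the paper omits.
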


In fact, Theorem \ref{sthm} is a corollary of Theorem \ref{main} and Theorem \ref{S}. It is worth noting that Theorem \ref{sthm} is not only optimal when the curvature is non-negative, but also when the curvature is negative. This point will be illustrated by the examples in the section \ref{S9}. Our frame of proof of Theorem \ref{sthm} is a measure-theoretic version of the Bernstein method; see \cite{ZZ2} for harmonic function and heat equation.
For the partially subcritical range, many previous works derived the Cheng–Yau inequalities (from Yau \cite{Y} and Cheng–Yau \cite{CY}) for \eqref{LE} on manifolds, see \cite{HWW, Li91, LU1, PWW, WW, Yang} and the references therein.

For more general positive nonlinear term $f$, we can obtain the logarithmic gradient estimate and universal boundedness estimate without restrictions on the lower index $\lambda$. %However, in this case, the upper index $\Lambda$ must be controlled by $p(N)$ due to technical reason.

\begin{thm}\label{main}
	Let $(X,d,\mu)$ be a \RCDn\, space with $N\in[1,\infty)$ and $K\ge 0$. If $u$ is a positive solution of \eqref{GE} on $B(x_0,2R)\subset X$ and the term $f$ satisfies\\
	{\rm (1)} $f(t)>0$ on $(0,\infty)$,\\
	{\rm (2)} $\Lambda<p(N)$,\\
	{\rm (3)} $\Pi>-\infty$,\\
	 then we have
	\begin{equation}\label{strong}
	\sup\limits_{B(x_0,R)}\left(\frac{|\nabla u|^2}{u^2}+\frac{f(u)}{u}\right)\le C\left(K+\frac{1}{R^2}\right),
	\end{equation}	
	%{\rm (ii)}for $n\in[1,4)$ and $\a\in(2,p_S(n))$, we have
	where $C=C(N,\Lambda,\Pi)$.
	
\end{thm}

From Theorem \ref{main}, we immediately have the following corollary.
\begin{cor}
	Let $(X,d,\mu)$ be a \RCDn\, space with $N\in[1,\infty)$ and $K\ge 0$. If $f(t)$ satisfies \\
	{\rm (1)} $f(t)>0$ on $(0,\infty)$,\\
	{\rm (2)} $\Lambda<p(N)$,\\
	{\rm (3)} $\Pi>-\infty$,\\
	then we  have \\
	{\rm (i)} if $K=0$, equation \eqref{GE} does not have positive solutions on $X$;\\
	{\rm (ii)} if $M_0=\inf\limits_{(0,\infty)}\frac{f(t)}{t}>0$, then there exist positive numbers $K_0=K_0(N,\Lambda,\Pi,M_0)$ and $R_0=R_0(N,\Lambda,\Pi,M_0)$ such that if $K=K_0$, then equation \eqref{GE} does not have positive solutions on $B(x_0,R)$ for any $x_0\in X$ and $R\ge R_0$.
\end{cor}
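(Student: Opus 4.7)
Both parts of the corollary follow as direct applications of Theorem \ref{main}. Discarding the nonnegative gradient term in \eqref{strong}, I obtain the simpler estimate
\[
\sup_{B(x_0,R)} \frac{f(u)}{u} \le C\left(K + \frac{1}{R^2}\right), \qquad C = C(N,\Lambda,\Pi),
\]
valid whenever $u$ is a positive solution of \eqref{GE} on $B(x_0,2R)$. The strategy is to show that the right-hand side can be forced strictly below the pointwise positivity floor of $f(u)/u$ supplied by the hypotheses, yielding a contradiction.

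For part (i), I would argue by contradiction. Suppose $u$ is a positive solution on all of $X$ and fix any $x_0 \in X$. Since $B(x_0, 2R) \subset X$ trivially for every $R > 0$, the displayed inequality holds with $K = 0$, giving $f(u(x_0))/u(x_0) \le C/R^2$ evaluated at $x_0$. Sending $R \to \infty$ forces $f(u(x_0))/u(x_0) = 0$, contradicting $u(x_0) > 0$ together with $f > 0$ on $(0,\infty)$.

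For part (ii), suppose a positive solution $u$ exists on some $B(x_0, R)$ with $R \ge R_0$ (to be chosen). I would apply Theorem \ref{main} with the half-radius $R/2$, so that the required hypothesis ball $B(x_0, 2 \cdot R/2) = B(x_0, R)$ coincides with the domain of $u$. The theorem then delivers on $B(x_0, R/2)$ the bound
\[
\frac{f(u)}{u} \le C\left(K + \frac{4}{R^2}\right).
\]
The hypothesis $M_0 = \inf_{(0,\infty)} f(t)/t > 0$ immediately gives the pointwise lower bound $M_0 \le C(K + 4/R^2)$. Choosing, for instance, $K_0 := M_0/(3C)$ and $R_0$ large enough so that $C(K_0 + 4/R_0^2) < M_0$ produces the desired contradiction whenever $K \le K_0$ and $R \ge R_0$.

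The whole argument is essentially bookkeeping on top of Theorem \ref{main}, and I do not anticipate any genuine technical obstacle. The only points requiring care are the factor of two shift in radii when invoking Theorem \ref{main}, and choosing the thresholds $K_0$ and $R_0$ so as to leave a strict gap against $M_0$.
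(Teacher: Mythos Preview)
Your argument is correct and is precisely the intended one: the paper gives no separate proof of this corollary, stating only that it follows immediately from Theorem \ref{main}, and your bookkeeping (dropping the gradient term, letting $R\to\infty$ in (i), and choosing $K_0,R_0$ to create a strict gap below $M_0$ in (ii)) is exactly the direct application the paper has in mind. The only cosmetic point is that the supremum in \eqref{strong} is an essential supremum, so in (i) you should phrase the conclusion as $f(u)/u=0$ $\mu$-a.e.\ rather than evaluating at a single point; this does not affect the argument.
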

A common nonlinear term that satisfies conditions $(1)$--$(3)$ and (ii) is $ku^{\a}+lu^{\b}$, where $\b<1<\a<p(N)$ and $k,l>0$. In this case, the aforementioned corollary implies that equation \eqref{GE} does not even have positive solutions in balls with finite radii when $K$ is small.

If we remove the positivity assumption of $f$  in Theorem \ref{main}, we can still obtain the logarithmic gradient estimate (and consequently the Harnack inequality) for equation \eqref{GE}. However, in this case, we cannot obtain information regarding the boundedness of solutions. Concretely, we have
\begin{thm}\label{main1}
	Let $(X,d,\mu)$ be a \RCDn\, space with $N\in[1,\infty)$ and $K\ge 0$. We assume that the nonlinear term $f$ satisfies $t^{-\a}f(t)$ is non-increasing on $(0,\infty)$ for some $\a\in(1,p(N))$.
	If $u$ is a positive solution of \eqref{GE} on $B(x_0,2R)\subset X$, then we have
	\begin{equation}\label{weak}
	\sup\limits_{B(x_0,R)}\frac{|\nabla u|^2}{u^2}\le C\left(K+\frac{1}{R^2}\right),
	\end{equation}	
	%{\rm (ii)}for $n\in[1,4)$ and $\a\in(2,p_S(n))$, we have
	where $C=C(N,\a)$.	
\end{thm}

A corollary of Theorem \ref{main1} is the following Liouville property. 
\begin{cor}\label{cc}
	Let $(X,d,\mu)$ be a \RCDO\,space with $N\in[1,\infty)$.
	If for some $\a\in(1,p(n))$, $t^{-\a}f(t)$ is non-increasing on $(0,\infty)$. Then any positive solution of \eqref{GE} on $X$ must be constant.
\end{cor}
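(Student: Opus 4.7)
The plan is to obtain the conclusion as a direct consequence of Theorem \ref{main1} together with a standard exhaustion and a ``zero gradient implies constant'' argument on \RCDO\,spaces.

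First I would verify that Theorem \ref{main1} applies. The hypothesis on the \RCDO\,space corresponds to taking $K=0$ in the statement of Theorem \ref{main1} (which is formulated for $\RCD^{*}(-K,N)$ with $K\ge 0$). The monotonicity assumption on $t^{-\alpha}f(t)$ is imposed for some $\alpha\in(1,p(N))$, which is exactly the structural hypothesis of Theorem \ref{main1}. If $u$ is a positive solution of \eqref{GE} defined on all of $X$, then for every $x_0\in X$ and every $R>0$, $u$ is in particular a positive solution on $B(x_0,2R)\subset X$, so Theorem \ref{main1} yields
\begin{equation*}
\sup_{B(x_0,R)}\frac{|\nabla u|^2}{u^2}\le C\left(0+\frac{1}{R^2}\right)=\frac{C}{R^2},
\end{equation*}
with $C=C(N,\alpha)$ independent of $R$.

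Next I would let $R\to\infty$. Since the right hand side tends to $0$ and the constant $C$ does not depend on $R$, we conclude $|\nabla u|=0$ $\mu$-almost everywhere on $X$. It remains to upgrade this into the statement that $u$ is constant. Here I would invoke the standard fact on \RCD\,metric measure spaces (which are geodesic, hence connected, and infinitesimally Hilbertian) that any Sobolev function with vanishing minimal weak upper gradient is constant; applied to $u$, which is in the local Sobolev class as a weak solution, this gives that $u$ is constant.

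The main obstacle is essentially non-existent: everything is driven by the scale-invariant estimate \eqref{weak} with $K=0$, which is exactly tailored for this Liouville-type conclusion. The only slightly delicate point is the last implication from $|\nabla u|\equiv 0$ to $u\equiv\text{const}$, which requires the connectedness and infinitesimally Hilbertian nature of the \RCDO\,space, together with the fact that on such spaces the minimal weak upper gradient detects constants on connected components; this is by now a well-established fact in the \RCD\,literature.
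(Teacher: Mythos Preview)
Your proposal is correct and is precisely the intended argument: the paper presents this as an immediate corollary of Theorem \ref{main1}, and the route is exactly to apply \eqref{weak} with $K=0$ on balls of arbitrary radius and let $R\to\infty$ to force $|\nabla u|\equiv 0$, hence $u$ constant. There is nothing to add.
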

A common nonlinear term that satisfies the condition in Corollary \ref{cc} is $ku-lu^d$, where $k,l\ge0$ and $d>1$ without any dimension restrictions. In Section 8, we will further relax the curvature condition for such nonlinear equations.

If $\Lambda\ge p(N)$, we need additional restrictions on the nonlinear term compared to those in Theorem \ref{main}. We first present the following weak form of the logarithmic gradient estimate and universal boundedness estimate,  even with slightly strengthened  conditions.

\begin{thm}\label{main2}
	Let $(X,d,\mu)$ be a \RCDn\,space with $N\in[1,\infty)$ and $K\ge 0$. If $u$ is a positive  solution of \eqref{GE} on $B(x_0,2R)\subset X$  and the nonlinear term $f$ satisfies\\
	{\rm (1)} $f(t)>0$ on $(0,\infty)$,\\
	{\rm (2)} $\Lambda\in[p(N),p_S(N))$,\\
	{\rm (3)} $\frac{tf'(t)}{f}$ is non-decreasing on $(0,\infty)$,\\
	{\rm (4)} if $N\in [4,\infty)$,  $\lambda\ge 1$; if $N\in(1,4)$,  $\lambda>2$,\\
	then we have\\
	{\rm (I)} if $N\in[4,\infty)$, there exist $\b=\b(N,\Lambda)>0$ and $L=L(N,\Lambda)$ such that for any $\e>0$,
	\begin{equation}\label{ss1}
	\sup\limits_{B(x_0,R)}(u+\e)^{-\b}\left(\frac{|\nabla u|^2}{u^2}+\frac{f(u)}{u}\right)\le C(N,\Lambda)\e^{-\b}\left(K+\frac{1}{R^2}+\frac{f(L\e)}{\e}\right),
	\end{equation}
	{\rm (II)} if $N\in[1,4)$, there exist $\b=\b(N,\Lambda)>0$ and $L=L(N,\lambda,\Lambda)$ such that for any $\e>0$,
	\begin{equation}\label{ss2}
	\sup\limits_{B(x_0,R)}(u+\e)^{-\b}\left(\frac{|\nabla u|^2}{(u+\e)^2}+\frac{f(u)}{u}\right)\le C(N,\lambda,\Lambda)\e^{-\b}\left(K+\frac{1}{R^2}+\frac{f(L\e)}{\e}\right).
	\end{equation}
\end{thm}

In order to remove $\e$ in Theorem \ref{main2} and get a clearer estimate same as that in  Theorem \ref{main}, we require the following definition. 
\begin{definition}\label{d0}
	An increasing function $f$ on $(0,\infty)$ is called $h$-inverse bounded for some positive function $h$ on $(0,\infty)$ if $f(t)\le Cf(\e)$ implies $t\le h(C)\e$ for any $C,\e>0$.
\end{definition}

In addition to Definition \ref{d0}, we also need two more conditions to obtain the strong form of the logarithmic gradient estimate and universal boundedness estimate. These additional conditions can be considered as superlinear conditions in some sense. However, in this case, we require further restrictions on the increment speed of the nonlinear term.

\begin{thm}\label{S}
Let $(X,d,\mu)$ be a \RCDn\,space with $N\in[1,\infty)$ and $K\ge 0$. If $u$ is a positive solution of \eqref{GE} on $B(x_0,2R)\subset X$ and $f$ satisfies {\rm (1)--(4)} in Theorem \ref{main2} and\\ 
	{\rm (V1)} $\lim\limits_{t\to 0^{+}}t^{-1}f(t)=0$,\\
	{\rm (V2)} there exists $\b\in(\rho(N,\Lambda),\infty)$ such that $t^{-1-\b}f(\cdot)$ is $h$-inverse bounded for some positive function $h$, where 
	\begin{eqnarray}
		\rho(N,\Lambda)=\begin{cases}
			\frac{2N}{N+4}(\Lambda-1-\frac{2}{N})\qquad\qquad\,\,\text{if}\quad N\in(1,2]\,\,\text{or}\,\, N\in(2,3)\, \text{and}\,\, \Lambda<\frac{N+1}{N-2}	\nonumber\\
			\frac{2(N-1)}{N+2}\Lambda-2\qquad\qquad\qquad\,\text{if} \quad N\in(2,3)\, \text{and}\,\, \Lambda\ge\frac{N+1}{N-2}\,\,\text{or}\,\, N\in[3,\infty),
		\end{cases}
	\end{eqnarray}
	then we have
	\begin{equation}\label{sss}
		\sup\limits_{B(x_0,R)}\left(\frac{|\nabla u|^2}{u^2}+\frac{f(u)}{u}\right)\le C\left(K+\frac{1}{R^2}\right),
	\end{equation}		
	where $C=C(N,\lambda,\Lambda,\b)$.
\end{thm}
Notice that $1+\rho(N,\Lambda)<\Lambda$ when $\Lambda\in[p(N),p_S(N))$. Therefore, for equation \eqref{LE}, when $\a\in [p(N),p_S(N))$, the assumptions of Theorem \ref{S} are valid and so estimate \eqref{sss} holds.

\begin{rmk}
	\rm %In the proof of Theorem \ref{S}, we need additional restrictions for the lower index $\lambda$ when $\Lambda\ge p(N)$ because we obtain the universal boundedness estimate by assuming that the term $\frac{f(t)}{t}$ is  $h$-inverse bounded for some positive $h$, which is significantly different from the case $\Lambda<p(N)$.\\
    We suspect that $f(u)=u^{\a}+u^{\delta}$  with $-\infty<\a,\delta<p_{S}(N)$ (and general $\sum_{i=1}^{m}k_iu^{\a_i}$ with $k_i>0$ and $\max(\a_i)<p_S(N)$) also satisfies estimate \eqref{sss}. To our knowledge, it is unknown for such concise nonlinear equation even on Euclidean spaces.
	
\end{rmk}

\textbf{Organization of the paper.} In Section 2, we introduce the necessary knowledge of \RCD\,spaces and the basic setting for our problem. Section 3 provides two types of auxiliary functions  and their elliptic inequalities, derived using Bochner's inequality. These results serve as the foundation for the subsequent arguments. Additionally, we also discuss the use of cut-off function and the weak maximal principle within current framework.  In Section 4 and Section 5, we utilize the two special auxiliary functions constructed in Section 3 to prove Theorem \ref{main} and Theorem \ref{main1}, respectively. In Section 6, by leveraging the results from Section 2 and 3, we establish the relations among three fundamental estimates: universal boundedness estimate, logarithmic gradient estimate, and Harnack inequality. Notably, under certain assumptions, these estmates are $\kappa$-equivalent on \RCDO\,spaces for any $\kappa>1$. Partial results from Section 6 are then utilized in the proofs of Theorem \ref{main2} and \ref{S} in Section 7, specially when  considering equations in spaces with "dimension" less than four.  
In Section 8, we  improve upon Theorem \ref{main1} for Lichnerowicz type equation and obtain the logarithmic gradient estimate and its corresponding Liouville theorem on \RCDn\,spaces for certain $K>0$. Lastly, in the appendix, we present an example that serves to illustrate the optimality of Theorem \ref{sthm} in its global form for the $N>3$ case.

\section{Preliminary on \RCD\, space}

\subsection{Sobolev space on metric measure spaces}
Given an open interval $I\subset \R$, an exponent $p\in[1,\infty]$, and $\gamma:I\to\R$, we say $\gamma$ belongs to $AC^p(I;X)$ if 
\begin{equation}
d(\gamma_s,\gamma_t)\le \int_{s}^{t}g(r)dr \quad\forall s,t\in I, s<t\nonumber
\end{equation}
for some $g\in L^p(I)$. The case $p=1$ corresponds to absolutely continuous curves. A continuous curve $\gamma: [0,1]\to X$ is said to be a geodesic provided 
\begin{equation}
d(\gamma_s,\gamma_t)=|s-t|d(\gamma_0,\gamma_1) \quad\forall s,t\in[0,1].\nonumber
\end{equation}
$(X,d)$ is said a geodesic space if for any $x_0,x_1\in X$, there exists a geodesic $\gamma$ joining $x_0$ and $x_1$. We will denote by $C([0,1],X)$ and $\G$ the space of continuous curves on $[0,1]$ and the space of geodesics, which are both endowed with the supremum norm. For every $t\in[0,1]$, the map $e_t:C([0,1],X)\to X$ is the evaluation at time $t$ defined by $e_t(\gamma)=\gamma(t)$.

Then we start to describe the theory of \emph{Sobolev space}. There are several different approaches to the theory of weakly differentiable functions over metric measure space such as \cite{AGS2,C,H,HK,Sh}. Among them, we follow the one based upon the concept of test plan in Ambrosio-Gigli-Savar\'{e} \cite{AGS2}. In \cite{AGS2}, authors had proved that their definition of \emph{Sobolev space} $W^{1,2}(X)$ are equivalent to Shanmugalingam's definition in \cite{Sh} and Cheeger's definition in \cite{C}. 
\begin{definition}
	A probability measure $\P\in\PC$ is said to be a test plan on $X$ provided the following two properties are satisfied:\\
	{\rm(i)} There exists a constant $C>0$ such that $(e_t)_{\sharp}\P\le C\mu$.\\
	{\rm(ii)} $\P$ is concentrated on $AC^2([0,1],X)$ and it holds that 
	\begin{equation}
	\int\int_{0}^{1}|\dot{\gamma}_t|^2dtd\P(\gamma)<\infty.\nonumber
	\end{equation}
\end{definition}
\begin{definition}
	The Sobolev class $S^2(X)$ is defined as the space of all Borel functions $f: X\to\R$ that satisfy the following property: there exists a function $G\in L^2(\mu)$ with $G\ge 0$ such that 
	\begin{equation}
	\int|f(\gamma_1)-f(\gamma_0)|d\P(\gamma)\le\int_{0}^{1}\int G(\gamma_t)|\dot{\gamma}_t|^2d\P(\gamma)dt.\nonumber
	\end{equation}
	Any such $G$ is said to be a weak upper gradient for $f$.
\end{definition}
The set of all weak upper gradient of $f$ is closed and convex in $L^2(\mu)$. Then the unique weak upper gradient of $f$ having minimal $L^2(\mu)$-norm is called \emph{minimal weak upper gradient} of $f$ and is denoted by $|\nabla f|$. We then define the \emph{Sobolev space}
$W^{1,2}(X)$(or $H^1(X)$) as $S^2(X)\cap L^2(X)$ equipped with the norm
\begin{equation}
\parallel f\parallel_{W^{1,2}(X)}=\sqrt{\parallel f\parallel_{L^2(X)}^2+\parallel |\nabla f|\parallel_{L^2(X)}^2}.\nonumber
\end{equation}
For our description, we also need the notion $W^{1,2}(\Omega)$ for some open set $\Omega\subset X$.
\begin{definition}
	Let $\Omega\subset X$ be an open set. A Borel function $f: \Omega\to \R$ belongs to $S^2_{loc}(\Omega)$, provided, for any Lipschitz function $\phi: X\to \R$ with $supp(\phi)\subset \Omega$, it holds $\phi f\in S^2(X)$. Given any $f\in S^2_{loc}(\Omega)$, we define the function $|\nabla f|$ as 
	\begin{equation}
	|\nabla f|=|\nabla(f\phi)|,\quad\mu-a.e.\text{ on}\quad\{\phi=1\},\nonumber
	\end{equation}
	for any $\phi$ as above. The space $S^2(\Omega)$ is the collection of such $f$  with $|\nabla f|\in L^2(\Omega)$.
\end{definition}
The  well-posedness of Definition 3.3 stems from the locality property of minimal weak upper gradients. As before, we can define $W^{1,2}(\Omega)=S^2(\Omega)\cap L^2(\Omega)$. The Sobolev space $W^{1,2}(X)$ is a Banach space, but in general it is not a Hilbert space. If it is a Hilbert space, then the metric measure space $(X,d,\mu)$ is said \emph{infinitesimally Hilbertian}.

Given any two Sobolev functions $f,g\in S^2(X)$, let's define
\begin{equation}
H_{f,g}(\epsilon)=\frac{1}{2}|\nabla(f+\epsilon g)|^2\in L^1(\mu)\quad \text{for every}\,\epsilon\in\R.
\end{equation}
Then the map $H_{f,g}:\R\to L^1(\mu)$ is convex, meaning that
\begin{equation}
H_{f,g}(\lambda \epsilon_0+(1-\lambda)\epsilon_1)\le \lambda H_{f,g}(\epsilon_0)+(1-\lambda)H_{f,g}(\epsilon_1)\quad\mu- a.e.\,\,\text{for all}\,\epsilon_0,\epsilon_1\in\R,\,\lambda\in[0,1].\nonumber
\end{equation}
Therefore the  monotonicity of the difference quotients of $H_{f,g}$ grants that
\begin{equation}
\left\langle \nabla f,\nabla g\right\rangle:= \inf_{\epsilon>0}\frac{H_{f,g}(\epsilon)-H_{f,g}(0)}{\epsilon}=\lim\limits_{\epsilon\downarrow 0}\frac{H_{f,g}(\epsilon)-H_{f,g}(0)}{\epsilon}\in L^1(\mu).
\end{equation}
Similarly, one can define $\left\langle \nabla f,\nabla g\right\rangle$ for $f,g\in S^2_{loc}(\Omega)$. If the metric measure space $(X,d,\mu)$ is \emph{infinitesimally Hilbert}, the map from $W^{1,2}(X)\times W^{1,2}(X)$ to $L^1(\mu)$ given by $ (f,g)\mapsto\left\langle \nabla f,\nabla g\right\rangle$ is bilinear, symmetric and satisfies the Cauchy-Schwarz inequality.

\begin{definition}
	Let $(X,d,\mu)$ be a proper infinitesimally Hilbertian space and $\Omega$ is an open set in $X$. Then a function $f\in W^{1,2}_{loc}(\Omega)$ is in $\mathcal{D}_{loc}(\Delta)$ provided there exists $g\in L^2_{loc}(\Omega)$ such that
	\begin{equation}
	\int gh=-\int \left\langle \nabla f,\nabla h\right\rangle\quad\text{for every}\quad h\in W^{1,2}_0(\Omega),
	\end{equation} 	
	where $W^{1,2}_0(\Omega)$ is the set of $W^{1,2}(\Omega)$ with compact support in $\Omega$.
\end{definition}

\subsection{Riemannian curvature-dimension condition}
In this subsection, we recall some basic definitions and properties of space with lower Ricci curvature bounds that we will need later.

Let $\p2$ be the $L^2$-Wasserstein space over metric measure space $(X,d,\mu)$, i.e. the subset of all Borel probability measure $\nu$ satisfying
\begin{equation}
\int d^2(x,x_0)d\nu<\infty\nonumber
\end{equation}
for some $x_0\in X$. For $\mu_0$,$\mu_1\in\mathscr{P}_2(X)$, the quadratic transportation distance $W_2(\mu_0,\mu_1)$ is defined by
\begin{equation}
W_2^2(\mu_0,\mu_1)=\inf \int_{X\times X}d^2(x,y)dq(x,y)\nonumber
\end{equation}
where the infimum is taken over all $q\in\mathscr{P}(X\times X)$ with $\mu_0$ and $\mu_1$ as the first and second marginal. 
Such a $q$ is also called a $coupling$ of $\mu_0$ and $\mu_1$. Any coupling which realizes the Wasserstein distance is called an \emph{optimal coupling} of $\mu_0$ and $\mu_1$. We denote by $OptGeo(\mu_0,\mu_1)$ the space of all $\P\in\mathscr{P}(Geo(X))$ for which $(e_0,e_1)_{\sharp}\P$ is an optimal coupling. Let $\mathscr{P}_2(X,d,\mu)\subset\p2$ be the subspace of all measures absolutely continuous w.r.t. $\mu$. Denote by $\mathscr{P}_{\infty}(X,d,\mu)$
the set of measures in $\mathscr{P}_{2}(X,d,\mu)$ with bounded support. Then we introduce the \emph{reduced curvature-dimension condition} from \cite[Definition 3.9]{EKS}.

\begin{definition}
	Let $K\in\R$ and $N\in[1,\infty)$. We say that a metric measure space $(X,d,\mu)$ satisfies the reduced curvature-dimension condition ${\rm CD}^*(K,N)$ if and only if for each pair $\mu_0=\rho_0 \mu$, $\mu_1=\rho_1\mu\in\mathscr{P}_{\infty}(X,d,\mu)$ there exist an optimal coupling $q$ of them and a geodesic $(\mu_t)_{t\in[0,1]}$ in $\mathscr{P}_{\infty}(X,d,\mu)$  connecting them such that for all $t\in[0,1]$ and $N'\ge N$
	\begin{equation}
	\int \rho_t^{-\frac{1}{N'}}d\mu_t\ge \int_{X\times X}\Big[\sigma_{K/N'}^{(1-t)}(d(x_0,x_1))\rho_0(x_0)^{-\frac{1}{N'}}+\sigma_{K/N'}^{(t)}(d(x_0,x_1))\rho_1(x_1)^{-\frac{1}{N'}}\Big]dq(x_0,x_1)\nonumber
	\end{equation}
	where the function
	\begin{eqnarray}
	\sigma_k^{(t)}:=
	\begin{cases}
	\frac{\sin(t\theta\sqrt{k})}{\sin(\theta\sqrt{k})}\,\,\,\qquad \text{if}\quad 0<k\theta^2<\pi^2\nonumber\\
	t \quad\qquad\qquad \,\,\,\text{if}\quad k\theta^2=0\nonumber\\
	\frac{\sinh(t\theta\sqrt{k})}{\sinh(\theta\sqrt{k})}\qquad \text{if}\quad 0<k\theta^2<\pi^2\nonumber\\
	\infty\quad\qquad\qquad \text{if}\quad 0<k\theta^2<\pi^2.\nonumber
	\end{cases}
	\end{eqnarray}
	If in addition $(X,d,\mu)$ is infinitesimally Hilbertian, then we say that it is a ${\rm RCD}^*(K,N)$ space.	
\end{definition}
From the work of Erbar-Kuwada-Sturm \cite{EKS}, one has the following Bochner's inequality. 
\begin{thm}\cite[Theorem 4.8]{EKS}\label{B0}
	Let $(X,d,\mu)$ be  a ${\rm RCD}^*(K,N)$ space with $K\in\R$ and $N\in[1,\infty)$. Then for all $f\in\mathcal{D}(\Delta)$ with $\Delta f\in W^{1,2}(X,d,\mu)$ and all $g\in \mathcal{D}(\Delta)\cap L^{\infty}(X,\mu)$ with $g\ge 0$ and $\Delta g\in L^{\infty}(X,\mu)$, we have
	\begin{equation}
	\frac{1}{2}\int\Delta g|\nabla f|^2d\mu-\int g\left\langle \nabla\Delta f,\nabla f\right\rangle\ge K\int g|\nabla f|^2d\mu+\frac{1}{N}\int g(\Delta f)^2d\mu.
	\end{equation}	
\end{thm}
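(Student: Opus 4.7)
The plan is to derive the Bochner inequality from $\mathrm{CD}^{*}(K,N)$ by establishing, and then exploiting, the equivalent Bakry--Émery condition $\mathrm{BE}(K,N)$: a dimensional pointwise gradient estimate for the heat semigroup. First, using infinitesimal Hilbertianity, I would introduce the $L^{2}$-gradient flow $(P_{t})_{t\ge 0}$ of the Cheeger energy $\mathscr{E}(f)=\tfrac12\int|\nabla f|^{2}d\mu$, whose generator is exactly the Laplacian of Definition 2.4. The crucial structural observation is that $\mathrm{RCD}^{*}(K,N)$ is equivalent to a one-sided Evolution Variational Inequality of parameter $(K,N)$ for the Rényi entropy $\mathcal{U}_{N}(\rho\mu)=-\int \rho^{1-1/N}d\mu$ along $(P_{t})$, viewed as the gradient flow of $\mathcal{U}_{N}$ in $(\mathscr{P}_{2}(X),W_{2})$. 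This step packages the Lagrangian convexity of $\mathcal{U}_{N}$ along $W_{2}$-geodesics into an inequality that couples first-order dissipation with a quadratic second-order dimensional correction.

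Second, I would apply a Kuwada-type duality to translate the Wasserstein EVI into an Eulerian gradient estimate. Concretely, one shows that for $f\in W^{1,2}(X)$ and $t>0$
\begin{equation*}
|\nabla P_{t}f|^{2}+\frac{4Kt^{2}}{N(e^{2Kt}-1)}(\Delta P_{t}f)^{2}\le e^{-2Kt}\,P_{t}(|\nabla f|^{2})\quad\mu\text{-a.e.}
\end{equation*}
The argument compares $W_{2}(P_{t}\mu_{0},P_{t}\mu_{1})$ with $W_{2}(\mu_{0},\mu_{1})$ through Kantorovich duality and the Hopf--Lax semigroup, the EVI providing precisely the second-order term that produces the $(\Delta P_{t}f)^{2}$ contribution in the dimensional regime.

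Third, I would differentiate the gradient estimate at $t=0^{+}$. Formally expanding $P_{t}f=f+t\Delta f+o(t)$ and $P_{t}(|\nabla f|^{2})=|\nabla f|^{2}+t\Delta|\nabla f|^{2}+o(t)$ gives
\begin{equation*}
|\nabla f|^{2}+2t\langle\nabla f,\nabla\Delta f\rangle+\tfrac{2t}{N}(\Delta f)^{2}+o(t)\le(1-2Kt)\bigl(|\nabla f|^{2}+t\Delta|\nabla f|^{2}\bigr)+o(t).
\end{equation*}
Rearranging and pairing against a nonnegative test function $g$ with $\Delta g\in L^{\infty}$, the term $\int g\,\Delta|\nabla f|^{2}$ is interpreted by duality as $\int \Delta g\,|\nabla f|^{2}$; taking $t\downarrow 0$ then yields exactly the integrated inequality of Theorem~\ref{B0}.

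The main obstacle is the second step: justifying the dimensional Kuwada duality in a non-smooth setting where all objects exist only in the weak, $L^{2}$ sense. One has to produce a rigorous second-order Taylor expansion of $t\mapsto\mathcal{U}_{N}(P_{t}\mu)$ along optimal couplings, and to regularize the potentials through heat-mollification in order to extract the quadratic $(\Delta P_{t}f)^{2}$ correction. Two further regularity points must be handled: the $t\downarrow 0$ limit in step three requires the Sobolev-to-Lipschitz property of $\mathrm{RCD}^{*}(K,N)$ to upgrade weak gradients to pointwise Lipschitz slopes, and one needs $L^{\infty}$ control on $\Delta P_{t}f$ and $\Delta P_{t}g$ together with strong continuity of $P_{t}$ on $W^{1,2}$. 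It is precisely at these regularization steps that infinitesimal Hilbertianity, which excludes the Finslerian counterexamples to $\mathrm{BE}(K,\infty)$, plays a decisive role.
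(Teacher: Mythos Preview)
The paper does not prove this statement at all: Theorem~\ref{B0} is quoted verbatim as \cite[Theorem 4.8]{EKS} and used as a black box. It appears in the preliminaries section solely to motivate the measure-theoretic Bochner inequality of Lemma~\ref{Bochner} (itself cited from \cite{ZZ2}), which is the tool actually used in the paper. So there is nothing to compare your proposal against.

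That said, your sketch is a reasonable outline of the Erbar--Kuwada--Sturm argument, and the three-step plan (EVI for the R\'enyi entropy $\Rightarrow$ dimensional Bakry--Ledoux gradient estimate via Kuwada duality $\Rightarrow$ differentiation at $t=0$) is indeed the backbone of \cite{EKS}. Two cautions. First, the heart of the matter is not the formal manipulations in your third step but the rigorous justification of the second: in \cite{EKS} the passage from $\mathrm{EVI}_{K,N}$ to $\mathrm{BL}(K,N)$ goes through an intermediate action/entropy coupling estimate along the heat flow rather than a direct Kuwada-type argument, and getting the dimensional correction term requires substantial work with the distorted entropies. Second, the differentiation at $t=0$ is not done by Taylor-expanding $P_t f$ pointwise as you wrote (one does not have $|\nabla f|^2 \in \mathcal{D}(\Delta)$ a priori); instead one integrates the gradient estimate against the test function $g$ first and then differentiates the resulting scalar function of $t$, using that $g\in\mathcal{D}(\Delta)\cap L^\infty$ with $\Delta g\in L^\infty$ precisely to justify the integration by parts. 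Your identification of infinitesimal Hilbertianity and the Sobolev-to-Lipschitz property as the key structural inputs is correct.
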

For our application, we need the following measure-theoretic version of Theorem \ref{B0}.
\begin{lem}\cite[Corollary 3.6]{ZZ2}\label{Bochner}
	Let $(X,d,\mu)$ be a ${\rm RCD}^{*}(K,N)$ space with $K\in\mathbb{R}$ and $N\in[1,\infty)$. Assume $f\in \W(B_R(x_0))$ and $\Delta f\in  \W(B_R(x_0))\cap L^{\infty}(B_R(x_0))$. Then we have $|\nabla f|^2\in \W(B_{\frac{3R}{4}}(x_0))\cap L^{\infty}(B_{\frac{3R}{4}}(x_0))$ and that $\Delta |\nabla f|^2$ is a signed Radon measure on $B_{\frac{3R}{4}}(x_0)$. It has the following Radon--Nikodym decomposition 
	\begin{equation}
	\Delta |\nabla f|^2=\Delta^{ac} |\nabla f|^2+\Delta^s |\nabla f|^2,\nonumber
	\end{equation}
	where $\Delta^{ac} |\nabla f|^2$ and $\Delta^s |\nabla f|^2$ are absolutely continuous part and singular part with respect to $\mu$, and
	\begin{eqnarray}
	\Delta^{s} |\nabla f|^2\ge 0,\quad
	\frac{1}{2}\Delta^{ac} |\nabla f|^2\ge\frac{(\Delta f)^2}{N}+\left\langle \nabla \Delta f,\nabla f\right\rangle+K|\nabla f|^2\nonumber
	\end{eqnarray}
	for $\mu$-$a.e.\,\,x\in B_{\frac{3R}{4}}(x_0)$.
\end{lem}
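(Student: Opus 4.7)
The strategy is to reduce the measure-valued statement to the integral Bochner inequality of Theorem~\ref{B0} through a cut-off/localization argument, and then to extract the Radon--Nikodym decomposition by testing against non-negative bump functions. I would proceed in three stages: a localization/regularization step, application of the integral Bochner inequality, and passage to the measure decomposition via Lebesgue differentiation.

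First, I would construct a good cut-off $\eta\in\mathcal{D}(\Delta)$ with $\eta\equiv 1$ on $B_{4R/5}(x_0)$, $\mathrm{supp}(\eta)\subset B_{7R/8}(x_0)$, and $|\nabla\eta|,\Delta\eta\in L^\infty(X)$; such cut-offs exist on \RCD\, spaces by the construction of Mondino--Naber. Using $\eta$ I would replace $f$ by a globally defined $\tilde f$ that coincides with $f$ on $B_{4R/5}(x_0)$ and satisfies the global hypotheses of Theorem~\ref{B0}, namely $\tilde f\in\mathcal{D}(\Delta)$ with $\Delta\tilde f\in \W(X)\cap L^\infty(X)$. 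The locality of the minimal weak upper gradient then gives $|\nabla\tilde f|=|\nabla f|$ on $B_{4R/5}(x_0)$, and the local regularity $|\nabla f|^2\in \W(B_{3R/4}(x_0))\cap L^\infty(B_{3R/4}(x_0))$ follows from Savar\'e's self-improvement of the Bakry--\'Emery calculus on \RCD\, spaces.

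Second, I would test the integral Bochner inequality of Theorem~\ref{B0} applied to $\tilde f$ against $g=\eta^2 h$, where $h\ge 0$ is an arbitrary non-negative element of $\mathcal{D}(\Delta)\cap L^\infty$ with $\Delta h\in L^\infty$ and $\mathrm{supp}(h)\subset B_{3R/4}(x_0)$. Since $\eta\equiv 1$ on a neighbourhood of $\mathrm{supp}(h)$, the cut-off terms drop, and the inequality reduces to
\begin{equation*}
\tfrac{1}{2}\int \Delta h\,|\nabla f|^2\,d\mu - \int h\left\langle\nabla\Delta f,\nabla f\right\rangle d\mu \ge K\int h|\nabla f|^2\,d\mu + \tfrac{1}{N}\int h(\Delta f)^2\,d\mu.
\end{equation*}
The functional $h\mapsto \tfrac{1}{2}\int\Delta h\cdot|\nabla f|^2\,d\mu$ is therefore bounded below by an $L^1_{loc}$-integral on $B_{3R/4}(x_0)$, and by polarization and the Riesz representation theorem it extends to a signed Radon measure which we denote $\Delta|\nabla f|^2$.

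Finally, I would apply the Lebesgue decomposition $\Delta|\nabla f|^2=\Delta^{ac}|\nabla f|^2+\Delta^s|\nabla f|^2$ and combine it with the integral inequality above. Restricting $h$ to bump functions concentrated near Lebesgue points of the density of $\Delta^{ac}|\nabla f|^2$ and passing to the limit yields the pointwise lower bound on the absolutely continuous part. The non-negativity of $\Delta^s|\nabla f|^2$ is extracted by concentrating $h$ on sets of arbitrarily small $\mu$-measure that carry the singular part of $\Delta|\nabla f|^2$: on such sets the right-hand side terms and the absolutely continuous contribution on the left vanish in the limit, leaving only the non-negative singular contribution. The main obstacle is the second-order regularity step, i.e. showing $|\nabla f|^2\in \W\cap L^\infty$ locally from only the first-order regularity of $\Delta f$; on smooth manifolds this is immediate from the Bochner identity, but on \RCD\, spaces it relies on Savar\'e's delicate self-improvement results upgrading the integral $\Gamma_2$-calculus to a measure-valued statement, while a secondary technical subtlety is controlling the error from the region $\{\eta<1\}$ so that it does not enter the localized inequality on $B_{3R/4}(x_0)$.
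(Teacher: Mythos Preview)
The paper does not supply its own proof of this lemma: it is quoted verbatim as \cite[Corollary 3.6]{ZZ2} and used as a black box throughout. There is therefore no in-paper argument to compare your proposal against.

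That said, your sketch is a faithful outline of how the result is established in the source literature. The localization via Mondino--Naber cut-offs, the reduction to the integral Bochner inequality of Theorem~\ref{B0}, and the extraction of the measure-valued statement from Savar\'e's self-improvement of the $\Gamma_2$-calculus are exactly the ingredients Zhang--Zhu assemble. Your identification of the main obstacle is also correct: the passage from $\Delta f\in W^{1,2}\cap L^\infty$ to $|\nabla f|^2\in W^{1,2}\cap L^\infty$ locally is the substantive step, and it genuinely requires Savar\'e's machinery rather than a soft argument. One minor point: the existence of $\Delta|\nabla f|^2$ as a signed Radon measure is not obtained by polarization and Riesz from a one-sided bound alone; one needs the $W^{1,2}$-regularity of $|\nabla f|^2$ together with an integration-by-parts characterization of the measure-valued Laplacian, which is again part of the self-improvement package. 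But this is a matter of phrasing rather than a gap in the strategy.
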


\subsection{Local weak solution of nonlinear equations}
In this subsection, we let $(X,d,\mu)$ be a \RCD\,metric measure space for some $K\in \mathbb{R}$ and $N\in[1,\infty)$. From the viewpoint of weak solution in PDE theory, we introduce the following definition of weak solution of \eqref{GE}.
\begin{definition}\label{ws}
	Let $\Omega\subset X$ be a domain. A function $u(x)$ is called a weak solution of  equation \eqref{GE} on $\Omega$ if $u\in \W_{loc}(\Omega)\cap L^{\infty}_{loc}(\O)$  and
	\begin{equation}
	\int_{\O}-f(u)\cdot \phi+\left\langle \nabla u,\nabla \phi\right\rangle d\mu =0,
	\end{equation}
	for all $\phi\in {\rm LIP}_0(\O)$.\,We say a weak solution is positive if it possesses positive lower bound on any compact subset of $\O$.
\end{definition}
\begin{rmk}
Above definition of positive solution is stronger than the standard sense in PDE theory (positive almost everywhere in the sense of measure theory). This is because when applying the Leibniz rule (Lemma \ref{rule}) to the transformation $w=u^{-\beta}$ with $\beta>0$ (see Section \ref{s3}), we require $w\in L^{\infty}_{\text{loc}}$ (we a priori do not have the crucial Harnack inequality).
By the strong maximum principle \cite[Theorem 2.8]{GR}, if the weak solution is continuous and non-negative with $f\ge 0$, then it is positive in the sense described above.% unless it is trivial due to our assumption that the space is proper.
\end{rmk}

First, we need calculus rules in \RCD\,space from Gigli \cite[Proposition 3.17,4.11]{G}.
\begin{lem}\label{rule}
	Let $\Omega$ be a domain in \RCD\,space for some $K\in\mathbb{R}$ and $N\in[1,\infty)$. Then we have following chain rule and Leibniz rule.\\
	$(i)$ Let $f\in \W(\Omega)\cap L^{\infty}(\Omega)$ and $h\in C^2(\R)$. Then we have
	\begin{equation}\label{cal1}
	\Delta[ h(f)]=h'(f)\cdot \Delta f+h''(f)\cdot |\nabla f|^2.
	\end{equation}
	$(ii)$ Let $f,g\in \W(\Omega)\cap L^{\infty}(\Omega)$. Then we have
	\begin{equation}\label{cal2}
	\Delta (f\cdot g)=\Delta f\cdot g+f\cdot \Delta g+2\left\langle \nabla f,\nabla g\right\rangle.
	\end{equation}
\end{lem}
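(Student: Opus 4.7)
The plan is to verify both identities in the distributional sense, by pairing against test functions $\phi \in \LC$ with compact support in $\Omega$ and reducing everything to the analogous calculus rules at the level of minimal weak upper gradients, which are already available because the ambient space is infinitesimally Hilbertian. In other words, I regard Definition 2.4 (the integration-by-parts definition of $\Delta$) as the working definition, and show that the candidate right-hand sides of (\ref{cal1}) and (\ref{cal2}) satisfy the same weak formulation.

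For (i), I first use that on an infinitesimally Hilbertian space the gradient chain rule $\langle \nabla h(f), \nabla \phi \rangle = h'(f) \langle \nabla f, \nabla \phi \rangle$ holds $\mu$-a.e.\ for $f\in W^{1,2}(\Omega)\cap L^\infty(\Omega)$ and $h\in C^2(\mathbb{R})$. This is the analogue in the metric setting of the classical chain rule, and is what makes $h(f)\in W^{1,2}_{\rm loc}(\Omega)$. Since $f$ is bounded and $\phi\in\LC$, the function $h'(f)\phi$ is a legitimate element of $W^{1,2}_0(\Omega)$, so testing the defining identity of $\Delta f$ against $h'(f)\phi$ and using the gradient Leibniz rule gives
\begin{equation*}
\int h'(f)\phi\, \Delta f\, d\mu = -\int \langle \nabla f,\nabla(h'(f)\phi)\rangle\,d\mu = -\int h''(f)\phi|\nabla f|^2\,d\mu-\int h'(f)\langle\nabla f,\nabla\phi\rangle\,d\mu.
\end{equation*}
Rearranging, $\int\phi\bigl[h'(f)\Delta f+h''(f)|\nabla f|^2\bigr]\,d\mu=-\int\langle\nabla h(f),\nabla\phi\rangle\,d\mu$, which is exactly the weak identity defining $\Delta[h(f)]$. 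Uniqueness of weak Laplacians then yields (\ref{cal1}).

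The Leibniz rule (ii) follows by the same template. Paired against $\phi\in\LC$, the gradient Leibniz rule gives $-\int\langle\nabla(fg),\nabla\phi\rangle\,d\mu=-\int\langle f\nabla g+g\nabla f,\nabla\phi\rangle\,d\mu$. Since $f,g$ are bounded, $f\phi$ and $g\phi$ lie in $W^{1,2}_0(\Omega)$ and serve as admissible test functions for $\Delta g$ and $\Delta f$ respectively; expanding their gradients by the Leibniz rule and adding yields
\begin{equation*}
\int\phi(f\Delta g+g\Delta f)\,d\mu = -\int\langle f\nabla g+g\nabla f,\nabla\phi\rangle\,d\mu-2\int\phi\langle\nabla f,\nabla g\rangle\,d\mu,
\end{equation*}
which rearranges to the weak identity defining $\Delta(fg)=f\Delta g+g\Delta f+2\langle\nabla f,\nabla g\rangle$.

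The main obstacle is not the bookkeeping above but the underlying gradient-level calculus: the chain rule $|\nabla h(f)|=|h'(f)|\,|\nabla f|$ (with the corresponding bilinear identity $\langle\nabla h(f),\nabla g\rangle=h'(f)\langle\nabla f,\nabla g\rangle$) and the Leibniz rule $\nabla(fg)=f\nabla g+g\nabla f$ in the cotangent module. These rest on the bilinearity and symmetry of $(f,g)\mapsto\langle\nabla f,\nabla g\rangle$, which are only available once the space is infinitesimally Hilbertian, together with locality of the minimal weak upper gradient and a careful truncation/approximation argument to handle general $h\in C^2$ from polynomials. This is exactly where the hypothesis that $(X,d,\mu)$ is an $\RCD$ space (rather than a general $\CD$ space) enters decisively; these facts are established in \cite{AGS2,G}. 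Given them, the Laplacian-level identities reduce, as above, to a one-line integration by parts.
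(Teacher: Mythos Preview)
Your argument is correct and is essentially the standard one: reduce the Laplacian identities to the gradient-level chain and Leibniz rules (valid under infinitesimal Hilbertianity, as established in \cite{AGS2,G}) via the integration-by-parts definition of $\Delta$. Note, however, that the paper does not give its own proof of this lemma at all; it simply quotes the result from Gigli \cite[Proposition~3.17,~4.11]{G}, with the remark afterward clarifying that the identities are to be read distributionally. Your write-up therefore supplies strictly more than the paper does, and matches the approach in the cited source.
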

\begin{rmk}
	Here, \eqref{cal1} and \eqref{cal2} hold in the sense of distribution. If $f,g\in{\rm LIP}(\Omega)\cap\mathcal{D}(\Omega)\cap L^{\infty}(\Omega)$, then above formulas hold as ordinary sense.	
\end{rmk}
Before  proving theorems in Section 1, we need the following fact about $|\nabla u|^2$ of weak solution of equation \eqref{GE} from \cite[Lemma 3.12]{LU2} which is a parabolic version of the following Lemma \ref{reg}.
\begin{lem}\label{reg}
	Let $(X,d,\mu)$ be a \RCD\,space with $K\in\mathbb{R}$ and $N\in[0,\infty)$. Let $u(x)\in \W(B(x_0,2R))\cap L^{\infty}(B(x_0,2R))$ be a positive weak solution of equation \eqref{GE} on $B(x_0,2R)$. Then we have $|\nabla u|^2 \in \W(B(x_0,R))\cap L^{\infty}(B(x_0,R))$.	
\end{lem}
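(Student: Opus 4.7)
The plan is to deduce Lemma \ref{reg} directly from the measure-theoretic Bochner inequality (Lemma \ref{Bochner}) applied to $u$ itself. Lemma \ref{Bochner} on a ball $B_{\rho}(x_0)$ yields $|\nabla u|^2\in \W(B_{3\rho/4}(x_0))\cap L^{\infty}(B_{3\rho/4}(x_0))$ as soon as $u\in \W(B_{\rho}(x_0))$ and $\Delta u\in \W(B_{\rho}(x_0))\cap L^{\infty}(B_{\rho}(x_0))$. Choosing $\rho=\tfrac{4R}{3}$ (so that $\overline{B_\rho(x_0)}\subset B(x_0,2R)$ and $3\rho/4=R$), the conclusion of Lemma \ref{reg} is then immediate once the hypotheses of Lemma \ref{Bochner} are verified on this intermediate ball.

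The verification proceeds in three short steps. First, testing Definition \ref{ws} identifies $\Delta u=-f(u)$ in the distributional sense on $B(x_0,2R)$. Second, by the definition of a positive solution, $u$ admits a uniform lower bound $m>0$ on the compact set $\overline{B_{4R/3}(x_0)}$, while $u\in L^{\infty}(B(x_0,2R))$ gives a uniform upper bound $M$. Hence $u(\overline{B_{4R/3}(x_0)})\subset [m,M]\subset(0,\infty)$, and since $f\in C^2(0,\infty)$ the quantities $f(u),f'(u),f''(u)$ are all in $L^{\infty}(B_{4R/3}(x_0))$. In particular, $\Delta u=-f(u)\in L^{\infty}(B_{4R/3}(x_0))$. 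Third, to see that $f(u)$ lies in $\W(B_{4R/3}(x_0))$, I apply the chain rule (Lemma \ref{rule}(i)) and obtain $|\nabla f(u)|\le \|f'\|_{L^{\infty}[m,M]}\,|\nabla u|$, which is in $L^2$ by assumption. Combining these, $\Delta u=-f(u)\in \W(B_{4R/3}(x_0))\cap L^{\infty}(B_{4R/3}(x_0))$, and Lemma \ref{Bochner} then yields the claim on $B(x_0,R)$.

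The only mildly delicate point, which I expect to be the main obstacle, is that the chain rule in Lemma \ref{rule}(i) is stated for composition with $h\in C^2(\mathbb{R})$, whereas here $f$ is only $C^2$ on the half-line $(0,\infty)$. This is handled by replacing $f$ with a $C^2(\mathbb{R})$ extension $\tilde f$ that agrees with $f$ on a neighborhood of $[m,M]$ (obtained via a standard smooth cutoff outside $[m/2,2M]$). Since $u$ takes values in $[m,M]$ on $B_{4R/3}(x_0)$, one has $f(u)=\tilde f(u)$ there, and the chain rule applies to $\tilde f$. This is precisely the point where the strengthened notion of positivity from the Remark after Definition \ref{ws}—namely a positive lower bound on compacta rather than mere pointwise positivity—enters the argument in an essential way; without it, $\tilde f'(u)$ could fail to be bounded and $f(u)$ would not be controlled in $\W$.
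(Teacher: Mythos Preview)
Your argument is correct and is exactly the natural route: verify that $\Delta u=-f(u)\in \W(B_{4R/3}(x_0))\cap L^{\infty}(B_{4R/3}(x_0))$ using the two-sided bounds $m\le u\le M$ on the compact set $\overline{B_{4R/3}(x_0)}$ (which is where the strengthened positivity hypothesis and properness of the space are used), then invoke Lemma~\ref{Bochner}. The extension of $f$ to a $C^2(\R)$ function $\tilde f$ coinciding with $f$ on a neighborhood of $[m,M]$ is the right way to reconcile the $C^2(\R)$ hypothesis of Lemma~\ref{rule} with the $C^2(0,\infty)$ regularity of $f$.

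For comparison, the paper does not actually present a proof of Lemma~\ref{reg}; it simply refers to \cite[Lemma~3.12]{LU2}, a parabolic analogue in a companion preprint. Your proposal therefore cannot be compared line-by-line with a proof in the present paper, but it is precisely the expected argument, and is implicitly what underlies Remark~\ref{lb}, which points out that the positive upper and lower bounds on $u$ are what make the chain rule applicable throughout.
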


\section{Basic elliptic inequalities and lemmas}\label{s3}

In this section, we present several computational lemmas that are integral to  proving main theorems. The core of the section are constructions of auxiliary functions about solutions.% Using Bochner inequality, we derive elliptic inequalities about auxiliary functions.

Let $u\in \W(B(x_0,2R))\cap L^{\infty}(B(x_0,2R))$ be a positive weak solution of \eqref{GE} on $B(x_0,2R)$. First, for $\b\neq 0$, we set
\begin{equation}\label{tran}
w=u^{-\beta},
\end{equation}
then chain and Leibniz rule in Lemma \ref{rule} give the equivalent equation of \eqref{GE}:
\begin{equation}\label{wle}
\Delta w=\Big(1+\frac{1}{\beta}\Big)\frac{|\nabla w|^2}{w}+\b w\frac{f(u)}{u}.
\end{equation}
For undetermined real numbers $\g,\epsilon,d\ge 0$, we define \emph{the first kind} auxiliary function:
\begin{equation}\label{F}
F=(u+\epsilon)^{-\b\g}\left(\frac{|\nabla w|^2}{w^2}+d\frac{f(u)}{u}\right).
\end{equation}
Here the case $\e>0$ is essentially important for our estimates on curvature negative spaces. When $\epsilon=0$ and $\g=1$, $F$ is also called a $P$-function in  \cite{CFP} (see also \cite{CM,LU0, LU3, W}), which is used to obtain Liouville-type results for critical or subcritical equations in the Ricci non-negative manifolds.
 We also need to consider the following transformation %Ciraolo–Farina–Polvara
\begin{equation}\label{tran2}
w=(u+\epsilon)^{-\b},
\end{equation}
where $\e>0$ is undetermined. Then transformation \eqref{tran2} gives the equivalent form of \eqref{GE}:
\begin{equation}\label{wlee}
\d w=\left(1+\frac{1}{\b}\right)\frac{|\nabla w|^2}{w}+\b w^{1+\frac{1}{\b}}f(u).
\end{equation}
We define \emph{the second kind} auxiliary function:
\begin{equation}\label{G}
G=w^{\g}\left(\frac{|\nabla w|^2}{w^{2}}+d\frac{f(u)}{u}\right),
\end{equation}
where $\g, d\ge0$ are undetermined real numbers.

The following two lemmas provide elliptic inequalities for the auxiliary functions $F$ and $G$, respectively.
\begin{lem}\label{k1}
	Let $u$ be a positive solution of equation \eqref{GE} on $B(x_0,2R)$. The function $w$ is defined by equation \eqref{tran}, and $F$ is its corresponding first kind auxiliary function as described in equation \eqref{F}. If $f(u)>0$ when $u>0$, then we have
	\begin{eqnarray}\label{37}
	\Delta F&\ge&-2Kw^{-2}(u+\e)^{-\b\g}|\nabla w|^2+2\left(\frac{1}{\b}-1+\g\frac{u}{u+\e}\right)\left\langle\nabla F,\nabla \ln w\right\rangle\nonumber\\
	&&+(u+\epsilon)^{-\b\g}\left(U\frac{|\nabla w|^4}{w^4}+Vw^{-2}|\nabla w|^2\frac{f(u)}{u}+W\left(\frac{f(u)}{u}\right)^2\right)
	\end{eqnarray}
	on $B(x_0,\frac{3}{2}R)$\footnote{This inequality means that $\d^s F\ge0$ and $\d^{ac}F\ge \text{RHS of} \,\,\eqref{37}$ in the measure-theoretic sense, where $\d^sF$ and $\d^{ac}F$ are singular part and absolutely continuous part of $\d F$ with respect to $\mu$, respectively. All inequalities in the following description are understood to have this meaning.  }, where
	\begin{displaymath}
	\begin{aligned}
	U=&\,\frac{2}{N}\left(1+\frac{1}{\b}\right)^2+\left(\frac{\g}{\b}-\g^2\right)\frac{u^2}{(u+\e)^2}+2\left(1-\frac{1}{\b}\right)\frac{\g u}{u+\e}-2,\\
V=&\,\frac{4}{N}(1+\b)+2\left(1-\frac{uf'}{f}\right)+d\left(\frac{1}{\b^2}\frac{u^2f''}{f}-\frac{2}{\b}\left(\frac{uf'}{f}-1\right)\right)\nonumber\\
&+\frac{\g u}{u+\e}\left(\b+d\left(\left(\frac{1}{\b}-\g\right)\frac{u}{u+\e}+2-\frac{2}{\b}\right)\right),\\
W=&\,\frac{2\b^2}{N}+d\left(\frac{\b\g u}{u+\e}+1-\frac{uf'}{f}\right).\nonumber
	\end{aligned}
	\end{displaymath}

\end{lem}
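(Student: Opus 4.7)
The strategy is a Bochner-type computation in the spirit of Yau's gradient estimates, executed with the RCD calculus of Lemmas \ref{Bochner} and \ref{rule}. Since $u$ is locally positive and bounded, both $w=u^{-\beta}$ and $\Phi:=(u+\epsilon)^{-\beta\gamma}$ are smooth compositions of $u$, so the chain and Leibniz rules of Lemma \ref{rule} apply; by Lemma \ref{reg}, $|\nabla w|^2\in W^{1,2}\cap L^{\infty}$ locally on $B(x_0,\tfrac{3}{2}R)$, so Lemma \ref{Bochner} is available for $w$. Writing $F=\Phi H$ with $H:=|\nabla w|^2/w^2+d\,f(u)/u$ and expanding by Leibniz, the singular part of $\Delta F$ reduces to $\Phi w^{-2}\Delta^s|\nabla w|^2\ge 0$, since $\Phi,w^{-2}>0$ and $\Delta(f(u)/u)$ and $\Delta\Phi$ are absolutely continuous. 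Hence it suffices to verify the absolutely continuous bound.

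The first step is to substitute \eqref{wle} into Bochner for $w$. Squaring $\Delta w=(1+1/\beta)|\nabla w|^2/w+\beta w f(u)/u$ feeds, via $(\Delta w)^2/N$, the leading $\frac{2}{N}(1+1/\beta)^2$, $\frac{4(1+\beta)}{N}$, and $\frac{2\beta^2}{N}$ contributions into $U$, $V$, and $W$ respectively. For $\langle\nabla\Delta w,\nabla w\rangle$ I expand both summands by Leibniz and use $\nabla u=-\frac{u}{\beta w}\nabla w$ (which follows from $w=u^{-\beta}$); this produces the cross term $(1+1/\beta)w^{-1}\langle\nabla|\nabla w|^2,\nabla w\rangle$ destined for the drift, together with pointwise contributions $-(1+1/\beta)|\nabla w|^4/w^2$ and $[1-\tfrac{1}{\beta}(uf'/f-1)]f(u)|\nabla w|^2/u$ that further seed $U$ and $V$. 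The $-2Kw^{-2}\Phi|\nabla w|^2$ piece in the conclusion is the direct contribution of $K|\nabla w|^2$ from Bochner after multiplication by $2\Phi/w^2$.

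The second step is the Leibniz expansion $\Delta F=\Phi\Delta H+H\Delta\Phi+2\langle\nabla\Phi,\nabla H\rangle$. Here $\Delta\Phi$ follows from the chain rule applied to $u\mapsto\Phi(u)$ together with $\Delta u=-f(u)$ and $|\nabla u|^2=u^2|\nabla w|^2/(\beta^2 w^2)$; $\Delta H$ breaks into $\Delta(|\nabla w|^2 w^{-2})$, which contributes $w^{-2}\Delta|\nabla w|^2$ (to be bounded below by Bochner) plus further cross terms in $\langle\nabla|\nabla w|^2,\nabla w\rangle$, and $d\,\Delta(f(u)/u)$, which through $\Delta u=-f(u)$ yields a pointwise $(f/u)^2$ contribution and through $|\nabla u|^2$ a pointwise $|\nabla w|^2 f(u)/u$ contribution — each seeding $V$ and $W$. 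All remaining cross terms in $\nabla|\nabla w|^2$ are rewritten by means of $\nabla F=\Phi\nabla H+H\nabla\Phi$ as multiples of $\langle\nabla F,\nabla\ln w\rangle$ plus pointwise residues; after merging with the Bochner leftover $(1+1/\beta)w^{-1}\langle\nabla|\nabla w|^2,\nabla w\rangle$, the drift coefficient should work out to $2(\tfrac{1}{\beta}-1+\gamma\tfrac{u}{u+\epsilon})$, matching the statement.

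The main obstacle is the algebraic bookkeeping in the final collection. Once the drift has been isolated, every surviving term is pointwise one of $|\nabla w|^4/w^4$, $|\nabla w|^2 f(u)/(uw^2)$, or $(f(u)/u)^2$, each multiplied by $\Phi$. Grouping coefficients carefully — for instance, the $(\gamma/\beta-\gamma^2)u^2/(u+\epsilon)^2$ piece of $U$ comes from the interaction of $\Delta\Phi$ with the $|\nabla w|^2/w^2$ summand of $H$ together with $2\langle\nabla\Phi,\nabla(|\nabla w|^2/w^2)\rangle$, while the $d$-dependent parts of $V$ and $W$ arise exclusively from the $d\,f(u)/u$ summand in $H$ through $\Phi\Delta H$ and $\langle\nabla\Phi,\nabla H\rangle$ — must reproduce exactly the formulas in the statement, which is where care is required. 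The hypothesis $f(u)>0$ enters to fix the sign of the mixed term $Vw^{-2}|\nabla w|^2 f(u)/u$ so that the inequality is recorded in the stated form; all other sign issues are handled by Bochner.
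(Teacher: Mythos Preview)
Your proposal is correct and follows essentially the same route as the paper: Leibniz-expand $\Delta F$, compute $\Delta\Phi$, $\Delta(f(u)/u)$, and $\Delta(|\nabla w|^2/w^2)$ via the chain rule and equation \eqref{wle}, apply Bochner (Lemma \ref{Bochner}) to $\Delta|\nabla w|^2$, and then rewrite the residual $\langle\nabla|\nabla w|^2,\nabla w\rangle$ terms through the identity for $\langle\nabla F,\nabla\ln w\rangle$ to obtain the drift, exactly as in \eqref{11}--\eqref{19}. One small correction: the hypothesis $f>0$ is not used to fix a sign in the inequality---the only genuine inequality is Bochner---but rather so that the coefficients $V,W$ can be written in terms of $uf'/f$ and $u^2f''/f$; when $d=0$ the paper notes (Lemma \ref{k4}) that this assumption is dispensable.
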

\begin{proof}
	By calculus rules in Lemma \ref{rule}, we have
	\begin{eqnarray}\label{11}
	\d F&=&\d((u+\e)^{-\b\g})\left(\frac{|\nabla w|^2}{w^2}+d\frac{f(u)}{u}\right)\nonumber\\
	&&+2\left(\left\langle\nabla (u+\e)^{-\b\g},\nabla\left(\frac{|\nabla w|^2}{w^2}\right)\right\rangle+d\left\langle\nabla (u+\e)^{-\b\g},\nabla\left(\frac{f}{u}\right)\right\rangle\right)\nonumber\\
	&&+(u+\e)^{-\b\g}\left(\d\left(\frac{|\nabla w|^2}{w^2}\right)+d\d\left(\frac{f}{u}\right)\right).
	\end{eqnarray}
	We compute terms in \eqref{11} as follows. First, by equation \eqref{GE}, transformation \eqref{tran} and calculus rule again, we  derive
	\begin{equation}\label{12}
	\d((u+\e)^{-\b\g})=\b\g (u+\e)^{-\b\g-1}f(u)+\b\g(\b\g+1)(u+\e)^{-\b\g-2}|\nabla u|^2,
	\end{equation}
	\begin{equation}\label{13}
	\d\left(\frac{f(u)}{u}\right)=\left(\frac{f}{u}-f'\right)\frac{f}{u}+\left(\frac{f''}{u}+\frac{2f}{u^3}-2\frac{2f'}{u^2}\right)|\nabla u|^2,
	\end{equation}
	\begin{equation}\label{14}
	\left\langle\nabla (u+\e)^{-\b\g},\nabla\left(\frac{f}{u}\right)\right\rangle=-\b\g (u+\e)^{-\b\g-1}\left(\frac{f'}{u}-\frac{f}{u^2}\right)|\nabla u|^2,
	\end{equation}
	\begin{eqnarray}\label{15}
	&&\left\langle\nabla (u+\e)^{-\b\g},\nabla\left(\frac{|\nabla w|^2}{w^2}\right)\right\rangle\nonumber\\
	&&=-2\g(u+\e)^{-\b\g-1}u\frac{|\nabla w|^4}{w^4}+\g(u+\e)^{-\b\g-1}w^{-\frac{1}{\b}-3}\left\langle\nabla w,\nabla|\nabla w|^2\right\rangle.
	\end{eqnarray}
	Then we deal with the term $\d\left(\frac{|\nabla w|^2}{w^2}\right)$.
	\begin{eqnarray}\label{16}
	\d\left(\frac{|\nabla w|^2}{w^2}\right)&=&\d(w^{-2})|\nabla w|^2+w^{-2}\d(|\nabla w|^2)+2\left\langle\nabla|\nabla w|^2,\nabla(w^{-2})\right\rangle\nonumber\\
	&=&\left(-2w^{-3}\d w+6w^{-4}|\nabla w|^2\right)|\nabla w|^2-4w^{-3}\left\langle\nabla w,\nabla|\nabla w|^2\right\rangle+w^{-2}\d(|\nabla w|^2)\nonumber\\
	&=&-2w^{-3}|\nabla w|^2\left(\left(1+\frac{1}{\b}\right)\frac{|\nabla w|^2}{w}+\b w\frac{f(u)}{u}\right)+6w^{-4}|\nabla w|^4\nonumber\\
	&&-4w^{-3}\left\langle\nabla w,\nabla|\nabla w|^2\right\rangle+w^{-2}\d(|\nabla w|^2)\nonumber\\
	&\ge&\left(4-\frac{2}{\b}\right)w^{-4}|\nabla w|^4-2\b w^{-2}|\nabla w|^2\frac{f(u)}{u}\nonumber\\
	&&-4w^{-3}\left\langle\nabla w,\nabla|\nabla w|^2\right\rangle+\frac{2}{N}w^{-2}(\d w)^2-2Kw^{-2}|\nabla w|^2\nonumber\\
	&&+2w^{-2}\left\langle\nabla w,\nabla\left(\left(1+\frac{1}{\b}\right)\frac{|\nabla w|^2}{w}+\b w\frac{f}{u}\right)\right\rangle\nonumber\\
	&=&2\left(\frac{1}{\b}-1\right)w^{-3}\left\langle\nabla w,\nabla|\nabla w|^2\right\rangle+\left(2-\frac{4}{\b}\right)w^{-4}|\nabla w|^4\nonumber\\
	&&+2\left(\frac{f}{u}-f'\right)w^{-2}|\nabla w|^2++\frac{2}{N}w^{-2}(\d w)^2-2Kw^{-2}|\nabla w|^2
	\end{eqnarray}
	on $B(x_0,\frac{3}{2}R)$,
	where we use calculus rule, equation \eqref{wle}, Bochner inequality in Lemma \ref{Bochner} and equation \eqref{wle} again for the second equation, the third equation, the fourth inequality and last equation, respectively.  Notice that
	\begin{eqnarray}\label{19}
	(u+\e)^{-\b\g}w^{-3}\left\langle\nabla w,\nabla|\nabla w|^2\right\rangle=\left\langle\nabla F,\nabla\ln w\right\rangle+\left(2-\frac{\g u}{u+\e}\right)(u+\e)^{-\b\g}\cdot\frac{|\nabla w|^4}{w^4}\nonumber\\
	-d\left(\frac{\g u}{u+\e}+\frac{1}{\b}\left(1-\frac{uf'}{f}\right)\right)\cdot\frac{f}{u}\cdot\frac{|\nabla w|^2}{w^2}.
	\end{eqnarray}
	Substitute \eqref{12}-\eqref{16} into \eqref{11} first, then we use \eqref{wle} and \eqref{19} to finish the proof.

\end{proof}
%In the following description, we denote $A,B,C$ in Proposition \ref{k1} by $A(\a,\b,\g,\e,d,u)$, $B(\a,\b,\g,\e,d,u)$ and $C(\a,\b,\g,\e,d,u)$.
Following a similar process as the proof of Lemma \ref{k1}, we obtain the following elliptic inequality for the second kind auxiliary function $G$.
\begin{lem}\label{k2}
	Let $u$ be a positive solution of \eqref{GE} on $B(x_0,2R)$. The function $w$ is defined by equation \eqref{tran2}, and $G$ is its corresponding second kind auxiliary function as described in equation \eqref{G}. If $f(u)>0$ when $u>0$, then we have
	\begin{eqnarray}\label{k22}
	\Delta G&\ge&-2Kw^{\g-2}|\nabla w|^2+2\left(\frac{1}{\b}-1+\g\right)\left\langle\nabla G,\nabla \ln w\right\rangle\nonumber\\
	&&+Xw^{\g-4}|\nabla w|^4+Yw^{\g-2}|\nabla w|^2\frac{f(u)}{u}+Zw^{\g}\left(\frac{f(u)}{u}\right)^2
	\end{eqnarray}
	on $B(x_0,\frac{3}{2}R)$, where
	\begin{displaymath}
	\begin{aligned}
	X=&\,\frac{2}{N}\left(1+\frac{1}{\b}\right)^2+2\g-\g^2-\frac{\g}{\b}-2,\\
	Y=&\,\left(\frac{4}{N}(1+\b)+2+\g\b\right)\frac{u}{u+\e}-2\frac{uf'}{f}+2d\left(\g-1+\frac{1}{\b}\right)\left(\frac{1}{\b}\frac{u+\e}{u}\left(\frac{uf'}{f}-1\right)-\g\right)\\
	&+d\left(\frac{1}{\b^2}\frac{(u+\e)^2}{u^2}\left(\frac{u^2f''}{f}+2-2\frac{uf'}{f}\right)+\g\left(\g+\frac{1}{\b}\right)-\frac{2\g}{\b}\frac{u+\e}{u}\left(\frac{uf'}{f}-1\right)\right),\\
	Z=&\,\frac{2\b^2}{N}\left(\frac{u}{u+\e}\right)^2+d\left(\frac{\b\g u}{u+\e}+1-\frac{uf'}{f}\right).\nonumber
	\end{aligned}
	\end{displaymath}

\end{lem}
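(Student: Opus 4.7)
The plan is to mirror the argument used for Lemma \ref{k1}, adapting each step to the shifted transformation $w=(u+\epsilon)^{-\beta}$ and the corresponding equation \eqref{wlee}. Writing $Q=\frac{|\nabla w|^{2}}{w^{2}}+d\,\frac{f(u)}{u}$, so that $G=w^{\gamma}Q$, the Leibniz rule from Lemma \ref{rule} gives
\begin{equation*}
\Delta G=\Delta(w^{\gamma})\cdot Q+2\bigl\langle\nabla w^{\gamma},\nabla Q\bigr\rangle+w^{\gamma}\Delta Q,
\end{equation*}
and I would compute the three pieces separately. For $\Delta(w^{\gamma})$ and $\langle\nabla w^{\gamma},\nabla Q\rangle$, chain rule together with \eqref{wlee} reduces everything to expressions in $|\nabla w|^{2}/w^{2}$ and $f(u)/u$. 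For $\Delta\bigl(\frac{f(u)}{u}\bigr)$, chain rule in $u$ and equation \eqref{GE} yield an expression analogous to \eqref{13}. The decisive step is $\Delta\bigl(\frac{|\nabla w|^{2}}{w^{2}}\bigr)$, for which I apply Lemma \ref{Bochner} on $B(x_{0},3R/2)$; this produces the curvature term $-2Kw^{-2}|\nabla w|^{2}$, the self-improving term $\frac{2}{N}w^{-2}(\Delta w)^{2}$, and a third-order term $\langle\nabla w,\nabla|\nabla w|^{2}\rangle$, after which \eqref{wlee} is used to expand $(\Delta w)^{2}$ and $\langle\nabla w,\nabla\Delta w\rangle$ in terms of $|\nabla w|^{2}/w^{2}$, $f(u)/u$, and $\langle\nabla w,\nabla(f/u)\rangle$.

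Next, to absorb the surviving third-order term and reveal the gradient of $G$, I would use the analogue of identity \eqref{19}: starting from $G=w^{\gamma}Q$ and expanding $\nabla G$, one obtains
\begin{equation*}
w^{\gamma-3}\bigl\langle\nabla w,\nabla|\nabla w|^{2}\bigr\rangle=\bigl\langle\nabla G,\nabla\ln w\bigr\rangle+(2-\gamma)\,w^{\gamma-4}|\nabla w|^{4}-d\!\left(\gamma+\tfrac{1}{\beta}\tfrac{u+\epsilon}{u}\!\left(\tfrac{uf'}{f}-1\right)\right)w^{\gamma-2}|\nabla w|^{2}\frac{f(u)}{u},
\end{equation*}
where the factor $\frac{u+\epsilon}{u}$ arises because $w^{-1/\beta}=u+\epsilon$ but the derivative of $f/u$ still involves $u$ itself. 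Substituting this identity back and grouping terms by their algebraic type ($w^{\gamma-4}|\nabla w|^{4}$, $w^{\gamma-2}|\nabla w|^{2}\frac{f(u)}{u}$, and $w^{\gamma}\bigl(\frac{f(u)}{u}\bigr)^{2}$) yields \eqref{k22} and lets me read off the coefficients $X$, $Y$, $Z$.

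The expected main obstacle is purely algebraic bookkeeping: because $w$ is now a power of $u+\epsilon$ rather than of $u$, the conversion $|\nabla u|^{2}=\beta^{-2}(u+\epsilon)^{2}w^{-2}|\nabla w|^{2}$ produces factors of $\frac{u+\epsilon}{u}$ (and of $\frac{u}{u+\epsilon}$ when going the other way) throughout the computation, and these accumulate in the coefficient $Y$. To keep the derivation manageable, I would treat $u$ and $u+\epsilon$ as formally independent until the very end, first eliminate every occurrence of $|\nabla u|$ in favour of $|\nabla w|$, then eliminate every $\Delta w$ in favour of $|\nabla w|^{2}/w$ and $f(u)/u$ via \eqref{wlee}, and only at the end collect the three structural types of terms. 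The non-trivial $X$ coefficient (which, unlike Lemma \ref{k1}, no longer depends on $u/(u+\epsilon)$) is a useful consistency check: the cross terms $\frac{2}{N}(1+\tfrac{1}{\beta})^{2}$, $2\gamma$ from the Bochner expansion, $-\gamma^{2}$ from $(\nabla w^{\gamma})\cdot(\nabla Q)$, and $-\frac{\gamma}{\beta}-2$ from the transport identity above should combine cleanly, confirming the correctness of the bookkeeping before one even attempts the harder $Y$ coefficient.
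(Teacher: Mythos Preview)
Your proposal is correct and follows exactly the route the paper indicates: the paper's own ``proof'' of Lemma~\ref{k2} is simply the sentence ``Following a similar process as the proof of Lemma~\ref{k1}, we obtain the following elliptic inequality for the second kind auxiliary function $G$,'' and you have laid out precisely that similar process (Leibniz expansion of $\Delta G$, Bochner on $\Delta(|\nabla w|^{2}/w^{2})$, substitution of \eqref{wlee}, and the analogue of the transport identity \eqref{19}). Your observation that the shift $w=(u+\epsilon)^{-\beta}$ forces factors $\frac{u+\epsilon}{u}$ and $\frac{u}{u+\epsilon}$ to appear throughout---and that this is the sole source of extra bookkeeping compared with Lemma~\ref{k1}---is exactly the point; just be careful with the sign in your stated transport identity (compare the sign of $\frac{1}{\beta}\bigl(\frac{uf'}{f}-1\bigr)$ with its counterpart in \eqref{19}).
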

\begin{rmk}\label{lb}
\rm	From the proof of Lemma \ref{k1}, it is clear that the existence of  positive upper and lower bounds for the solutions guarantees the validity of the chain rule, which is essential for the above argument.
\end{rmk}

In order to use these elliptic inequalities through the Bernstein-Yau method (as discussed in Yau \cite{Y} for harmonic functions on Riemannian manifolds), we require the weak maximal principle as stated in Zhang-Zhu \cite{ZZ2}, as well as a cut-off function on \RCD\, metric measure spaces. The following weak maximal principle is a special case of Theorem 4.4 in \cite{ZZ2}.

\begin{lem}\cite[Theorem 4.4]{ZZ2}\label{MMP}
	Let $\Omega$ be a bounded domain. Let $f(x)\in \W(\Omega)$ and suppose that $f$ achieve one of its strict maximum in $\Omega$ in the sense: there exists a neighborhood $U\subset\subset \Omega$ such that
	\begin{equation}
	\sup_{U}f>\sup_{\Omega\setminus U}f.\nonumber
	\end{equation}
	Assume that $\Delta f$ is a signed Radon measure with $\Delta^s f\ge0$. Let $w\in \W(\Omega)$, then for any $\epsilon>0$, we have
	\begin{equation}
	\mu\big(\{
	x\in\O: f(x)>\sup_{U}f-\epsilon \quad\text{and}\quad
	\Delta^{ac} f(x)+\left\langle \nabla f, \nabla w\right\rangle\le \epsilon
	\}\big)>0.\nonumber
	\end{equation}
\end{lem}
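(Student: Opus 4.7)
The plan is to argue by contradiction, testing the distributional Laplacian of $f$ against a nonnegative admissible function that combines a truncation of $f$ with an exponential weight in $w$; this converts the drift $\langle \nabla f, \nabla w\rangle$ into a divergence-form term that can be handled via integration by parts against the Radon measure $\Delta f$.

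First, by the monotonicity in $\epsilon$ of both $A_\epsilon := \{f > \sup_U f - \epsilon\}$ and $B_\epsilon := \{\Delta^{ac} f + \langle \nabla f, \nabla w\rangle \le \epsilon\}$, it suffices to treat $\epsilon$ strictly smaller than the gap $\sup_U f - \sup_{\Omega \setminus U} f$. Under this smallness, the strict-maximum hypothesis forces $A_\epsilon \subset\subset U$, so the cutoff $\eta := (f - \sup_U f + \epsilon)_+$ belongs to $W^{1,2}_0(\Omega)$ with $\nabla \eta = \chi_{A_\epsilon} \nabla f$ and $0 \le \eta \le \epsilon$.

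Suppose for contradiction that $\mu(A_\epsilon \cap B_\epsilon) = 0$, i.e.\ $\Delta^{ac} f + \langle \nabla f, \nabla w\rangle > \epsilon$ $\mu$-a.e.\ on $A_\epsilon$. Assume temporarily $w \in L^\infty(\Omega)$, so that $\eta\, e^w$ lies in $W^{1,2}_0(\Omega) \cap L^\infty(\Omega)$ by the chain and Leibniz rules of Lemma \ref{rule}. Since $\Delta f$ is a signed Radon measure with $\Delta^s f \ge 0$ and $\eta\, e^w \ge 0$, integration by parts combined with the identity $\nabla(\eta e^w) = e^w \chi_{A_\epsilon} \nabla f + \eta\, e^w \nabla w$ yields
\[
\int_{A_\epsilon} \eta\, e^w \bigl(\Delta^{ac} f + \langle \nabla f, \nabla w\rangle\bigr)\, d\mu \;\le\; -\int_{A_\epsilon} e^w |\nabla f|^2\, d\mu \;\le\; 0.
\]
The contradiction hypothesis forces the left-hand integrand to exceed $\epsilon\, \eta\, e^w$, which is strictly positive on the positive-measure set $A_\epsilon$, contradicting the displayed inequality.

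The main obstacle is removing the hypothesis $w \in L^\infty(\Omega)$, since for a generic $w \in W^{1,2}(\Omega)$ the weight $e^w$ need not even be integrable. A symmetric truncation $w_n := (w \wedge n) \vee (-n)$ gives $\nabla w_n = \chi_{\{|w| \le n\}} \nabla w$, so the bounded-$w$ argument captures the drift only on $\{|w| \le n\}$; the complementary error then carries exponential factors $e^{\pm n}$ that threaten to swamp the measure decay $\mu(\{|w| > n\}) \to 0$. The remedy is to substitute $e^w$ by a smooth bounded weight $\varphi_n(w)$ whose logarithmic derivative $\varphi_n'/\varphi_n$ converges to $1$ only on $\{|w| \le n\}$ (e.g.\ a suitable rescaling of $n e^t/(n + e^{|t|})$), so that the main term retains a uniform positive lower bound while the error vanishes in the limit $n \to \infty$. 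This truncation-and-limit step is the technical heart of the proof and is the reason we defer to \cite{ZZ2} for the full argument in the general metric measure setting.
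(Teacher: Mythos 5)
The first thing to note is that the paper itself contains no proof of this lemma: it is imported verbatim as a citation of \cite{ZZ2}, Theorem 4.4, so there is no in-paper argument to compare with and your write-up has to be judged against the original. For bounded $w$ your argument is correct, and it is essentially the standard mechanism (the same one used in \cite{ZZ2}): reduce to $\epsilon$ smaller than the gap $\sup_U f-\sup_{\Omega\setminus U}f$, so that $A_\epsilon=\{f>\sup_U f-\epsilon\}$ is contained (mod null sets) in $U$ and has positive measure by the definition of the essential supremum; test the Radon measure $\Delta f$ against the nonnegative, compactly supported function $(f-\sup_U f+\epsilon)_+e^{w}$; discard $\Delta^{s}f\ge 0$; and use $\nabla(f-\sup_U f+\epsilon)_+=\chi_{A_\epsilon}\nabla f$ so that the drift term is exactly reproduced and the leftover is $-\int_{A_\epsilon}e^{w}|\nabla f|^2\,d\mu\le 0$. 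The one point you pass over silently is that the defining integration-by-parts identity for the measure-valued Laplacian is formulated for Lipschitz test functions with compact support, so admitting $\eta e^{w}\in W^{1,2}_0(\Omega)\cap L^{\infty}(\Omega)$ as a test function requires a (routine, but worth stating) approximation argument using that $\Delta f$ has locally finite total variation.

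The genuine gap is the unbounded-$w$ case, which you explicitly leave open: the $\varphi_n(w)$ substitution is only sketched, and the error estimate is precisely where the difficulty lies (with $\varphi_n(t)=e^{\min(t,n)}$ the error carries a factor of order $e^{n}$ against only the decay of $\mu(\{|w|>n\})$, and for your proposed logistic-type weights no quantitative bound is given). Moreover, deferring to \cite{ZZ2} does not close this gap, because Theorem 4.4 there is stated under boundedness hypotheses, $f,w\in W^{1,2}(\Omega)\cap L^{\infty}(\Omega)$; the transcription in the present paper has silently dropped them (and some boundedness of $f$ is genuinely needed: if $f$ is not essentially bounded above on $U$, the set $A_\epsilon$ is empty and the statement fails trivially). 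In mitigation, every application in this paper invokes the lemma with bounded data: the function $A=\Phi F$ is bounded via Lemma \ref{reg}, and the drifts $B$ are built from $\ln u$, $\ln\Phi$ and bounded auxiliary quantities on $B(x_0,\frac{3}{2}R)$, where $u$ has positive upper and lower bounds and $\Phi\ge\alpha$. So your bounded-$w$ argument already covers everything the paper actually uses; the clean repair is to restore the $L^{\infty}$ hypotheses in the statement rather than to attempt the truncation-and-limit step.
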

Just like in the classical case, the following Laplacian comparison theorem guarantees the existence of a good cut-off function.
\begin{lem}\cite[Remark 5.17]{G}
	Let $(X,d,\mu)$ be an infinitesimally strictly convex $CD^{*}(K,N)$ metric measure space for $K\in\mathbb{R}$ and $N\in(1,\infty)$. For $x_0\in X$, denote by $d_{x_0}: X\to [0,\infty)$ the function $x\mapsto d(x_0,x)$. Then 
	\begin{equation}
	\Delta d_{x_0}|_{X\setminus x_0}\le \frac{N\sigma_{K,N}(d_{x_0})-1}{d_{x_0}}\mu.
	\end{equation}
	Here
	\begin{eqnarray}
	\sigma_{K,N}(\theta)=
	\begin{cases}
	\theta\sqrt{\frac{K}{N}}\cot\big(\theta\sqrt{\frac{K}{N}}\big)\qquad\qquad\quad\, \text{\rm if}\quad K>0\vspace{3mm}\\
	1\qquad \qquad \qquad\qquad\qquad\qquad \text{\rm if}    \quad K=0\vspace{3mm}\\
	\theta\sqrt{\frac{-K}{N}}\coth\big(\theta\sqrt{\frac{-K}{N}}\big)\quad\qquad\;\, \text{\rm if}\quad K<0.
	\end{cases}
	\end{eqnarray}
\end{lem}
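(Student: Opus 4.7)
The plan is to derive this Laplacian comparison via the optimal-transport characterization of the reduced curvature-dimension condition, in the spirit of Gigli's proof (Remark 5.17 of [G]). The key idea is to recognize the distance function $d_{x_0}$ as (locally, on $X\setminus\{x_0\}$) a Kantorovich potential for a transport problem whose geodesics are exactly the radial geodesics emanating from $x_0$; the $CD^{*}(K,N)$ distortion estimate on the density along such geodesics then yields the desired pointwise (distributional) bound on $\Delta d_{x_0}$.

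First, I would fix $x\in X\setminus\{x_0\}$ with $d_{x_0}(x)=r>0$ and localize the problem: on a small neighborhood $U$ of $x$ disjoint from $x_0$, the function $d_{x_0}$ is 1-Lipschitz and $c$-concave for $c=d^2/2$, and by the infinitesimal strict convexity assumption the Wasserstein geodesic between any two absolutely continuous measures supported near $x_0$ and near $x$ is unique and moves mass along minimizing geodesics from $x_0$ to the support of the target. This identification lets the gradient (in the upper gradient sense) of $d_{x_0}$ play the role of the transport direction, so that the Laplacian of $d_{x_0}$ is computed (in distributional sense) by the logarithmic derivative of the Jacobian of the transport at $t=1$.

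Second, I would apply the defining inequality of $CD^{*}(K,N)$ to a Wasserstein geodesic $\mu_t=\rho_t\mu$ starting from a small ball around $x_0$ (of radius $\varepsilon$) and ending in a small ball around $x$:
\begin{equation}
\int\rho_t^{-1/N}\,d\mu_t\ \ge\ \int\Bigl[\sigma_{K/N}^{(1-t)}(d(x_0',x'))\rho_0(x_0')^{-1/N}+\sigma_{K/N}^{(t)}(d(x_0',x'))\rho_1(x')^{-1/N}\Bigr]dq.
\end{equation}
Letting $\varepsilon\to 0$ so that the initial measure concentrates at $x_0$, the first term dominates and the inequality reduces (after a change of variables via the transport map) to the pointwise Jacobian bound $J_t(x)\ge\bigl(t\,\sigma_{K/N}^{(t)}(r)\bigr)^{N}$, where $J_t$ is the Jacobian of the map pushing $\mu_0$ to $\mu_t$ along radial geodesics. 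Differentiating $\log J_t$ at $t=1^-$ and using the standard identification $\Delta d_{x_0}=-\partial_t\log J_t|_{t=1^-}$ (valid in the distributional sense, with singular part of the correct sign, by Gigli's framework) produces exactly $N\sigma_{K,N}(r)/r - 1/r$ as the claimed upper bound. The three cases $K>0,K=0,K<0$ correspond to the trigonometric, linear, and hyperbolic model Jacobians respectively.

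The main obstacle is rigorously translating the optimal-transport density inequality into a distributional Laplacian bound at points where $d_{x_0}$ fails to be differentiable (cut locus points). This is exactly the technical content handled in Gigli's framework: the $c$-concavity of $d_{x_0}$ combined with infinitesimal strict convexity ensures that the distributional Laplacian exists as a signed Radon measure on $X\setminus\{x_0\}$ with $\Delta^{s}d_{x_0}\le 0$, so the inequality extends across the cut locus. A secondary technicality is carefully taking the limit $\varepsilon\to 0$ where the initial measure degenerates to $\delta_{x_0}$; one handles this by choosing $\mu_0$ with density $\rho_0\propto\varepsilon^{-N}\chi_{B_\varepsilon(x_0)}$ and using Bishop--Gromov volume estimates (which themselves follow from $CD^{*}(K,N)$) to control $\rho_0^{-1/N}$ uniformly. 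Combining these ingredients reproduces the bound, so the conclusion follows by citing Remark 5.17 of [G].
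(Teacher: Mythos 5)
This lemma is not proved in the paper at all: it is quoted verbatim from Gigli \cite[Remark 5.17]{G}, so the ``paper's proof'' is simply the citation. Your proposal sketches the optimal-transport heuristic behind such comparison results, but as written it is not a proof, and it ends by falling back on the very citation it was meant to replace, which is circular if taken as an argument.

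Beyond that, there are concrete gaps in the sketch. First, you misuse the hypothesis of infinitesimal strict convexity: in Gigli's framework this is a condition on the differential structure (it guarantees that the object $\langle\nabla f,\nabla g\rangle$ is single valued, so that the measure-valued Laplacian is well defined and linear); it does not give uniqueness of Wasserstein geodesics or force mass to move along radial geodesics, which is what you use it for. Second, the central step --- passing from the integrated $CD^{*}(K,N)$ entropy inequality to a \emph{pointwise} Jacobian bound $J_t\ge\bigl(t\,\sigma^{(t)}_{K/N}(r)\bigr)^N$ --- is exactly what is not available in a general nonsmooth $CD^{*}$ space: a transport ``Jacobian'' $J_t$ is not defined a priori, and localizing the integrated inequality requires essential non-branching plus a disintegration along geodesics, none of which you establish or invoke. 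Third, the identification $\Delta d_{x_0}=-\partial_t\log J_t|_{t=1^-}$ is a smooth-manifold heuristic; in the metric measure setting the Laplacian is defined via integration by parts, and Gigli's actual argument avoids pointwise Jacobians altogether, proceeding instead by differentiating the $N$-R\'enyi entropy along a $W_2$-geodesic induced by a Kantorovich potential (the squared distance), comparing horizontal and vertical derivatives, and then deducing the bound for $d_{x_0}$ from that for $d_{x_0}^2/2$ by the chain rule on $X\setminus\{x_0\}$. The limiting procedure $\varepsilon\to0$ with $\rho_0\propto\varepsilon^{-N}\chi_{B_\varepsilon(x_0)}$ is also left at the level of ``the first term dominates,'' with no control of the error terms. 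So the proposal identifies the right circle of ideas but does not close any of the steps that make the nonsmooth statement true; for the purposes of this paper the correct justification remains the citation to \cite[Remark 5.17]{G}.
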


%Therefore, we get the following good cut-off function by a stardard argument and above Laplacian comparison.
\begin{lem}\label{cut off}
{\rm (Existence of cut-off function)}Let $(X,d,\mu)$ be an $\text{RCD}$ space with $K\le 0$ and $N\in[1,\infty)$. Then for any $\alpha\in(0,\frac{1}{2}]$ and $R>0$, there exists a cut-off function $\Phi\in {\rm Lip}(B(x_0,2R))$ such that

\noindent
(i) $\Phi(x)=\phi(d(x_0,x))$, where $\phi\colon [0,\infty)\to\mathbb{R}$ is a non-increasing function satisfying
\begin{eqnarray}
	\phi(t)=
	\begin{cases}
		1 &\qquad t\in [0,R],\\
		\alpha &\qquad t\in \bigl[\frac{5}{4}R,2R\bigr].
	\end{cases} \nonumber
\end{eqnarray}

\noindent
(ii)
\begin{eqnarray}
	\frac{|\nabla \Phi|}{\Phi^{\frac{1}{2}}}\le\frac{C}{R}. \nonumber
\end{eqnarray}

\noindent
(iii)
\begin{eqnarray}
	\Delta \Phi\ge-\frac{C}{R}\sqrt{-NK}\coth\biggl(R\sqrt{\frac{-K}{N}}\biggr)-\frac{C}{R^2} \nonumber
\end{eqnarray}
holds on $B(x_0,2R)$ in the distribution sense, where $C>0$ is a universal constant.
\end{lem}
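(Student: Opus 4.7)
The plan is to construct $\Phi$ as a radial function of the distance $d_{x_0}$ and to read off (iii) directly from the Laplacian comparison stated just above. The only delicate point is property (ii): if one takes $\Phi=\phi\circ d_{x_0}$ for a straightforward interpolation $\phi$ from $1$ down to $\alpha$, then $|\nabla\Phi|/\Phi^{1/2}\sim 1/(\alpha^{1/2} R)$, which blows up as $\alpha\to 0$ and destroys the claimed universality of $C$. I therefore use the classical square-root trick and write $\Phi=\psi^2\circ d_{x_0}$, so that $|\nabla\Phi|/\Phi^{1/2}=2|\psi'|\circ d_{x_0}$ is automatically $\alpha$-independent.

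Concretely, I first fix a smooth non-increasing $\psi:[0,\infty)\to[\sqrt{\alpha},1]$ equal to $1$ on $[0,R]$ and to $\sqrt{\alpha}$ on $[5R/4,\infty)$, with the routine bounds $|\psi'|\le C/R$ and $|\psi''|\le C/R^2$ for a universal constant $C$ (achievable because the transition length is $R/4$ and the total drop $1-\sqrt{\alpha}\le 1$). Setting $\Phi(x)=\psi(d(x_0,x))^2=:\phi(d(x_0,x))$ yields (i) immediately; since $|\nabla d_{x_0}|=1$ $\mu$-a.e., one has $|\nabla\Phi|=2\psi(d_{x_0})|\psi'(d_{x_0})|$, which after division by $\Phi^{1/2}=\psi(d_{x_0})$ gives exactly (ii).

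For (iii), I apply the chain and Leibniz rules from Lemma \ref{rule} in distributional form, obtaining $\Delta\Phi=\phi'(d_{x_0})\,\Delta d_{x_0}+\phi''(d_{x_0})\,\mu$ as a signed Radon measure. On the annulus $R\le d_{x_0}\le 5R/4$ we have $\phi'(d_{x_0})=2\psi\psi'\le 0$, so multiplying the Laplacian comparison by $\phi'(d_{x_0})$ reverses the sign. Since $\coth$ is decreasing on $(0,\infty)$, the upper bound $(N\sigma_{K,N}(d)-1)/d=\sqrt{-NK}\coth(d\sqrt{-K/N})-1/d$ is dominated on this annulus by $\sqrt{-NK}\coth(R\sqrt{-K/N})$; combining this with $|\phi'|\le C/R$ and $|\phi''|\le C/R^2$ yields (iii). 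The $K=0$ case is treated either by sending $K\to 0^-$ or by using the limiting bound $\Delta d_{x_0}\le((N-1)/d_{x_0})\,\mu$ directly.

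The step that most needs care — rather than being a real obstacle — is bookkeeping the fact that $d_{x_0}\notin W^{1,2}_{\mathrm{loc}}$ and that $\Delta d_{x_0}$ is only a signed Radon measure, so every identity and inequality above must be read in the distributional/measure sense, with $\phi'$ and $\phi''$ entering as bounded continuous multipliers of $\Delta d_{x_0}$ (and of $\mu$). Once this framework is in place, the remaining content consists of the elementary derivative bounds on $\psi$ and the monotonicity of $\coth$ already noted.
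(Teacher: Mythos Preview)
Your proposal is correct and is exactly the construction the paper has in mind. The paper does not actually supply a proof of this lemma; it merely places the statement immediately after the Laplacian comparison (Lemma~3.5, from \cite{G}) to indicate that the cut-off is built in the standard way as a radial function of $d_{x_0}$. Your use of the square-root trick $\Phi=\psi^2\circ d_{x_0}$ is precisely the classical device (originating in Li--Yau type arguments) needed to make the bound in (ii) independent of $\alpha$, and your derivation of (iii) via the distributional chain rule combined with the Laplacian comparison and the monotonicity of $\coth$ is the intended route. The care you flag about $\Delta d_{x_0}$ being only a signed Radon measure is appropriate and is handled by the distributional reading of Lemma~\ref{rule}, as noted in the remark following it.
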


\section{Proof of Theorem \ref{main}}

In this section, we will present a proof of Theorem \ref{main} by using  Bernstein-Yau method, which serves as  a model for the  proofs that follow. For clarity and uniformity, in this section, we will assume that the nonlinear term $f$ in equation \eqref{GE} satisfies:\\
{\rm (i)} $f(t)>0$ on $(0,\infty)$,\\
{\rm (ii)} $\Lambda<p(n)$,\\
{\rm (iii)}  $\Pi>-\infty$.

Under these assumptions, we have the following lemma by setting $\g=0$ in Lemma \ref{k1}.

\begin{lem}\label{k3}
	Let $u$ be a positive  solution of \eqref{GE} on $B(x_0,2R)$. The function $w$ is defined by \eqref{tran}, and $F$ is its first kind auxiliary function  as described in equation \eqref{F} with $\g=0$. Then there exist $\b=\b(N,\Lambda)>0$ and $d=d(N,\Lambda,\Pi)>0$ such that
	\begin{eqnarray}
	\Delta F&\ge&-2KF+2\left(\frac{1}{\b}-1\right)\left\langle\nabla F,\nabla \ln w\right\rangle+LF^2
	\end{eqnarray}
	on $B(x_0,\frac{3}{2}R)$, where $L=L(N,\Lambda)>0$.
	
\end{lem}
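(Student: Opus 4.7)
The plan is to specialize Lemma~\ref{k1} to the case $\g = 0$ and convert the resulting pointwise estimate into the desired elliptic inequality by a purely algebraic quadratic-form argument. Taking $\g = 0$ makes the prefactor $(u+\e)^{-\b\g}$ equal to $1$ (so $\e$ drops out entirely) and reduces the gradient term in Lemma~\ref{k1} to exactly $2(\frac{1}{\b}-1)\langle\nabla F,\nabla\ln w\rangle$. Writing $A := \frac{|\nabla w|^2}{w^2}$ and $B := \frac{f(u)}{u}$, one has $F = A + dB$, and Lemma~\ref{k1} yields
\begin{equation*}
\Delta F \ge -2KA + 2\Bigl(\frac{1}{\b}-1\Bigr)\langle\nabla F,\nabla\ln w\rangle + UA^2 + VAB + WB^2.
\end{equation*}
Since $F \ge A$ and $K \ge 0$, we have $-2KA \ge -2KF$. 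So it suffices to find $\b, d, L > 0$ depending only on $N, \Lambda, \Pi$ so that the pointwise inequality $UA^2 + VAB + WB^2 \ge L(A + dB)^2$ holds for every $A, B \ge 0$, uniformly in $u > 0$.

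The second step is to bound $U, V, W$ from below independently of $u$. The constant $U = \frac{2}{N}(1+\frac{1}{\b})^2 - 2$ is $u$-free and can be made arbitrarily large positive by shrinking $\b > 0$. Substituting the bounds $\l \le \frac{uf'}{f} \le \Lambda$ and $\frac{u^2 f''}{f} \ge \Pi$ gives explicit $u$-independent lower bounds of the form
\begin{equation*}
V \ge \frac{4}{N}(1+\b) + 2(1-\Lambda) + \frac{d\,\Pi}{\b^2} - \frac{2d(\Lambda-1)_+}{\b}, \qquad W \ge \frac{2\b^2}{N} - d(\Lambda-1)_+,
\end{equation*}
where $(\cdot)_+$ denotes the positive part. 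The assumption $\Pi > -\infty$ is indispensable here: without it the contribution $d\Pi/\b^2$ inside $V$ would be unboundedly negative.

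The final step is the algebraic selection. The quadratic form $(U-L)A^2 + (V-2Ld)AB + (W-Ld^2)B^2$ is nonnegative on $A, B \ge 0$ provided $U > L$, $W > Ld^2$, and the discriminant condition $(V - 2Ld)^2 \le 4(U-L)(W - Ld^2)$ is satisfied. A natural route is to pick $\b = \b(N,\Lambda)$ small enough that $U$ is large, then take $d = d(N,\Lambda,\Pi)$ proportional to $\b^2$ (small enough to keep $W \ge \b^2/N$ and to control the $d/\b$ and $d/\b^2$ corrections inside $V$), and finally $L = L(N,\Lambda) > 0$ small enough to close the discriminant. The threshold $\Lambda < p(N) = \frac{N+3}{N-1}$ is precisely what makes this selection feasible; it is a Bernstein-method incarnation of the classical Gidas-Spruck exponent. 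The main obstacle is the nonlinear coupling among $(\b, d, L)$: one must verify that the scaling $d \sim \b^2$ simultaneously keeps the $\Pi$-dependent terms in $V$ bounded below and leaves the discriminant condition satisfied uniformly over the admissible range of $u$.
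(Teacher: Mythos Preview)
Your overall architecture is correct and matches the paper: specialize Lemma~\ref{k1} to $\g=0$, bound $U,V,W$ below independently of $u$, and then verify that the quadratic form $UA^2+VAB+WB^2$ dominates $LF^2$. The discriminant condition you write is equivalent to the AM--GM step the paper uses.

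The genuine gap is in your parameter selection. Taking $\b$ \emph{small} does not cover the full range $\Lambda<p(N)$. With $d\to 0$ and $\b\to 0$ your lower bounds become
\[
U\sim \frac{2}{N\b^2},\qquad V\to \frac{4}{N}+2(1-\Lambda),\qquad W\sim \frac{2\b^2}{N},
\]
so $4UW\to \frac{16}{N^2}$ while $V^2\to 4\bigl(\frac{2}{N}+1-\Lambda\bigr)^2$. The discriminant inequality $V^2\le 4UW$ then forces $\bigl|\frac{2}{N}+1-\Lambda\bigr|<\frac{2}{N}$, i.e.\ $\Lambda<1+\frac{4}{N}$. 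Since $p(N)=\frac{N+3}{N-1}=1+\frac{4}{N-1}>1+\frac{4}{N}$, your route fails precisely on the interval $\Lambda\in[1+\frac{4}{N},\,p(N))$.

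What actually works is a \emph{specific} (not small) value of $\b$. The paper sets $d=l=0$ first and maximizes
\[
H(\b)=\frac{4}{N}\sqrt{(1+\b)^2-N\b^2}+\frac{4}{N}(1+\b)+2(1-\Lambda)
\]
over $\b>0$; the maximizer is $\b_0=\frac{2}{N-1}$ (for $N>1$), at which $(1+\b_0)^2-N\b_0^2=1$ and $H(\b_0)=2\bigl(\frac{N+3}{N-1}-\Lambda\bigr)>0$ exactly when $\Lambda<p(N)$. This is where the threshold $p(N)$ originates. Only after fixing $\b=\b_0$ does one perturb to small $d=d(N,\Lambda,\Pi)>0$ and $l=l(N,\Lambda)>0$ by continuity of $H$; note in particular that $\b$ depends only on $N$, not on $\Lambda$ or $\Pi$. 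Your scaling heuristic $d\sim\b^2$ is not the mechanism---the $\Pi$ term is handled simply by taking $d$ small \emph{after} $\b$ is fixed.
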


\begin{proof}
	First, by setting $\g=0$ in Lemma \ref{k1}, we have
	\begin{eqnarray}\label{41}
\Delta F&\ge&-2KF+2\left(\frac{1}{\b}-1\right)\left\langle\nabla F,\nabla \ln w\right\rangle\nonumber\\
&&+U\frac{|\nabla w|^4}{w^4}+Vw^{-2}|\nabla w|^2\frac{f(u)}{u}+W\left(\frac{f(u)}{u}\right)^2
\end{eqnarray}
on $B(x_0,\frac{3}{2}R)$, where 
\begin{displaymath}
\begin{aligned}
U=&\,\frac{2}{N}\left(1+\frac{1}{\b}\right)^2-2,\\
V=&\,\frac{4}{N}(1+\b)+2\left(1-\frac{uf'}{f}\right)+d\left(\frac{1}{\b^2}\frac{u^2f''}{f}-\frac{2}{\b}\left(\frac{uf'}{f}-1\right)\right),\\
W=&\,\frac{2\b^2}{N}+d\left(1-\frac{uf'}{f}\right).\nonumber
\end{aligned}
\end{displaymath}
 By our conditions of $f$, for any $\b>0$, we immediately get
\begin{eqnarray}
V&\ge&\,\frac{4}{N}(1+\b)+\left(2+\frac{2d}{\b}\right)\left(1-\Lambda\right)+\frac{d}{\b^2}\Pi,\nonumber\\
W&\ge&\,\frac{2}{N}\b^2+d\left(1-\Lambda\right).\nonumber
\end{eqnarray}	
Therefore, for any $l>0$, if $U\ge l$ and $\frac{2}{N}\b^2+d\left(1-\Lambda\right)\ge l$, we have
\begin{eqnarray}\label{321}
&&	U\frac{|\nabla w|^4}{w^4}+V\frac{f}{u}\frac{|\nabla w|^2}{w^2}+W\left(\frac{f}{u}\right)^2\nonumber\\
&&\ge l\left(\frac{|\nabla w|^4}{w^4}+\left(\frac{f}{u}\right)^2\right)+H(\b,d,l,N,\Lambda,\Pi)\frac{f(u)}{u}\frac{|\nabla w|^2}{w^2},
\end{eqnarray}
 where	
\begin{eqnarray}
H(\b,d,l,N,\Lambda,\Pi)&=&2\sqrt{\left(\frac{2}{N}\left(1+\frac{1}{\b}\right)^2-2-l\right)\left(\frac{2}{N}\b^2+d\left(1-\Lambda\right)-l\right)}\nonumber\\&&+\frac{4}{N}(1+\b)+\left(2+\frac{2d}{\b}\right)\left(1-\Lambda\right)+\frac{d}{\b^2}\Pi.
\end{eqnarray}	
Then 	
\begin{equation}
H(\b,0,0,N,\Lambda,\Pi)=\frac{4}{N}\sqrt{(1+\b)^2-N\b^2}+\frac{4}{N}(1+\b)+2\left(1-\Lambda\right),
\end{equation}
and we choose $\b_0=\frac{2}{N-1}$ for $N>1$, $\b_0=\max(\Lambda,1)$ for $N=1$,	then 
\begin{eqnarray}
H(\b,0,0,N,\Lambda,\Pi)\ge
\begin{cases}
4\qquad\qquad\qquad\qquad \,\,\text{if} \qquad N=1\\
2\left(\frac{N+3}{N-1}-\Lambda\right)\qquad\quad\text{if} \qquad N>1.
\end{cases}
\end{eqnarray}

Therefore, by continuity of $H$, there exist $l_0=l(N,\Lambda)>0$ and $d=d(N,\Lambda,\Pi)\in(0,1)$ such that  

\begin{eqnarray}
H(\b,d_0,l_0,N,\Lambda,\Pi)\ge
\begin{cases}
2\qquad\qquad\qquad\qquad \,\,\text{if} \qquad N=1\\
\frac{N+3}{N-1}-\Lambda\qquad\qquad\quad\text{if} \qquad N>1.
\end{cases}
\end{eqnarray}	
	Then we choose $\b=\b_0$, $d=d_0$ and $l=l_0$, and we finish the proof with $L=\frac{l}{2}$.
		
\end{proof}

Now, we start to prove Theorem \ref{main}.
\begin{proof}[Proof of Theorem \ref{main}] Let $u$ be a positive solution on $B(x_0,2R)$. We fix $\gamma=0$, $\beta=\beta(N,\Lambda)>0$, $d=d(N,\Lambda,\Pi)>0$, and $L=L(N,\Lambda)>0$ such that Lemma~\ref{k3} holds (all notations are the same as in Lemma~\ref{k3}).
	
	Let $A:=\Phi F$ be the auxiliary function, where $\Phi$ is a suitable cut-off function as in Lemma~\ref{cut off} with $\alpha=\frac{M_1}{2M_2}$. Here, $M_1=\sup\limits_{B(x_0,R)} A$ and $M_2=\sup\limits_{B(x_0,\frac{3}{2}R)} A$. Without loss of generality, we may assume that $M_1>0$; otherwise, the proof is complete.
	First, by the Radon--Nikodym decomposition, we obtain
\begin{equation}\label{sing eq}
\Delta^{s}A=\Delta^s\Phi \cdot F+\Phi\cdot \Delta^s F\ge 0
\end{equation}
on $B(x_0,\frac{3R}{2})$ because the singular part of $\d \Phi$ and $\d F$ are non-negative by Lemma \ref{k1} and Lemma \ref{cut off}.
By chain rule and $\Phi\ge\alpha$, we also have
\begin{eqnarray}\label{aeq}
\d^{ac}A&=&\d^{ac}\Phi\cdot F+\d^{ac}F\cdot \Phi+2\left\langle\nabla \Phi,\nabla F\right\rangle\nonumber\\
&=& \Delta^{ac} \Phi\cdot F+\d^{ac}F\cdot \Phi+\frac{2}{\Phi}\left\langle\nabla \Phi,\nabla(\Phi F)-\nabla\Phi\cdot F\right\rangle\nonumber\\
&=&\Delta^{ac} \Phi\cdot F+\d^{ac}F\cdot \Phi+2\left\langle\frac{\nabla \Phi}{\Phi} ,\nabla A \right\rangle-2A\cdot\frac{|\nabla \Phi|^2}{\Phi^2}.
\end{eqnarray}
By Lemma \ref{k3} and chain rule  again, we have
\begin{eqnarray}\label{42}
\d^{ac}A&\ge& -2KA+2\left(\frac{1}{\b}-1\right)\left\langle\nabla A,\nabla \ln w\right\rangle+L\Phi F^2\nonumber\\
&&-2\left(\frac{1}{\b}-1\right)F\left\langle\nabla \Phi,\nabla \ln w\right\rangle+F\Delta^{ac} \Phi  +2\left\langle\frac{\nabla \Phi}{\Phi} ,\nabla A \right\rangle-2A\frac{|\nabla \Phi|^2}{\Phi^2}.
\end{eqnarray}
Cauchy-Schwarz inequality and basic equality give
\begin{eqnarray}\label{bb}
	-2\left(\frac{1}{\b}-1\right)F\left\langle\nabla \Phi,\nabla \ln w\right\rangle&\ge&-\left(\frac{1}{\b}-1\right)^2\frac{2}{L}\frac{|\nabla\Phi|^2}{\Phi}F-\frac{L}{2}\Phi \frac{|\nabla w|^2}{w^2}F\nonumber\\
	&\ge&-\left(\frac{1}{\b}-1\right)^2\frac{2}{L}\frac{|\nabla\Phi|^2}{\Phi}F-\frac{L}{2}\Phi F^2.
\end{eqnarray}
Substituting \eqref{bb} into \eqref{42} yields
\begin{eqnarray}
\d^{ac}A+\left\langle\nabla A,\nabla B\right\rangle&\ge& \Delta^{ac} \Phi\cdot F-2KA+\frac{L}{2}\Phi F^2-2\left(\frac{1}{L}\left(\frac{1}{\b}-1\right)^2+1\right)\frac{|\nabla \Phi|^2}{\Phi^2}A,\nonumber
\end{eqnarray}
where $B=2\left(1-\frac{1}{\b}\right)\ln w-2\ln \Phi$.

Notice that $A(x)$ achieves its strict maximum in $B(x_0,\frac{5}{4}R)$ in the sense of Lemma \ref{MMP}. Therefore, by Lemma \ref{MMP} ($\O=B(x_0,\frac{3}{2}R)$ and $U=B(x_0,\frac{5}{4}R)$), we have a sequence $\{x_j\}_{j=j_0}^{\infty}\subset B(x_0,\frac{3}{2}R)$ such that 
\begin{equation}
	A(x_j)\ge \sup\limits_{B(x_0,\frac{3}{2}R)}A-\frac{1}{j}>0
\end{equation}
and 
\begin{equation}\label{43}
\frac{1}{j}\ge \Delta^{ac} \Phi\cdot F(x_j)-2KA(x_j)+\frac{L}{2}\Phi F^2(x_j)-2\left(\frac{1}{L}\left(\frac{1}{\b}-1\right)^2+1\right)\frac{|\nabla \Phi|^2}{\Phi^2}A(x_j).
\end{equation}
Multiplying $\Phi(x_j)$ on both sides of \eqref{43} gives
\begin{equation}\label{44}
\frac{1}{j}\ge \Delta^{ac} \Phi\cdot A(x_j)-2K\Phi A(x_j)+\frac{L}{2}A^2(x_j)-2\left(\frac{1}{L}\left(\frac{1}{\b}-1\right)^2+1\right)\frac{|\nabla \Phi|^2}{\Phi}A(x_j).
\end{equation}
So, by the property of $\Phi$ in Lemma \ref{cut off}, we have
\begin{equation}\label{45}
\frac{1}{j}\ge \frac{L}{2}A^2(x_j)-2KA(x_j)-C(N,\Lambda)\left(\frac{\sqrt{K}}{R}+\frac{1}{R^2}\right)A(x_j),
\end{equation}
where we use $\sqrt{NK}\coth\left(R\sqrt{\frac{K}{N}}\right)\le C(N)(\sqrt{K}+R^{-1})$. Letting $j\to\infty$ in \eqref{45}, we obtain
\begin{equation}
	\sup\limits_{B(x_0,\frac{3}{2}R)}A\le C(N,\Lambda)\left(K+\frac{1}{R^2}\right),
\end{equation}
which implies
\begin{equation}
\sup\limits_{B(x_0,R)}\left(\frac{|\nabla u|^2}{u^2}+\frac{f(u)}{u}\right)\le C(N,\Lambda,\Pi)\left(K+\frac{1}{R^2}\right).
\end{equation}
We completed the proof.

\end{proof}

\section{Proof of Theorem \ref{main1}}
In this section, we do not require  the nonlinear term $f$ to be positive, as in Section 4. However, the cost of removing this assumption is that we cannot obtain the same bound information for the solutions. To illustrate this, let us reconsider Lemma \ref{k1} and set $\e=\g=d=0$, we can see that the positivity condition of $f$ is not necessary. 
\begin{lem}\label{k4}
	Let $u\in\W(B(x_0,2R))\cap L^{\infty}(B(x_0,2R))$ be a positive solution of \eqref{GE} on $B(x_0,2R)$, $w$ be defined by \eqref{tran} and $F$ be its first kind auxiliary function as \eqref{F} with $\e=\g=d=0$. Then we have
	\begin{eqnarray}\label{51}
	\Delta F&\ge&-2KF+2\left(\frac{1}{\b}-1\right)\left\langle\nabla F,\nabla \ln w\right\rangle+\left(\frac{2}{N}\left(1+\frac{1}{\b}\right)^2-2\right)F^2\nonumber\\
	&&+\left(\left(\frac{4}{N}(1+\b)+2\right)\frac{f(u)}{u}-2f'(u)\right)\frac{|\nabla w|^2}{w^2}+\frac{2\b^2}{N}\left(\frac{f(u)}{u}\right)^2
	\end{eqnarray}
	on $B(x_0,\frac{3}{2}R)$.

\end{lem}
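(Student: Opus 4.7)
The plan is to derive \eqref{51} by specializing Lemma \ref{k1} to the parameter choice $\e = \g = d = 0$, and along the way to verify that the positivity hypothesis on $f$ imposed there was never essentially used. With these choices, the auxiliary function in \eqref{F} reduces to $F = |\nabla w|^2/w^2$, the prefactor $(u+\e)^{-\b\g}$ equals $1$, and one has $U F^2 = U\,|\nabla w|^4/w^4$, so the structural form of \eqref{37} is automatically that of \eqref{51}.

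First I would substitute $\e = \g = d = 0$ into the coefficients $U$, $V$, $W$ appearing in Lemma \ref{k1}. The choice $\g = 0$ annihilates every $\g$-dependent piece of $U$ and $V$; the choice $\e = 0$ collapses all ratios $u/(u+\e)$ to $1$; and $d = 0$ eliminates the $d$-dependent terms in $V$ and $W$. What remains is
\[
U = \tfrac{2}{N}\bigl(1+\tfrac{1}{\b}\bigr)^2 - 2, \qquad V = \tfrac{4}{N}(1+\b) + 2 - 2\tfrac{u f'(u)}{f(u)}, \qquad W = \tfrac{2\b^2}{N}.
\]
Rewriting the middle contribution as $V \cdot w^{-2}|\nabla w|^2 \cdot f(u)/u = \bigl(\bigl(\tfrac{4}{N}(1+\b)+2\bigr)\tfrac{f(u)}{u} - 2 f'(u)\bigr)\tfrac{|\nabla w|^2}{w^2}$ (which absorbs the factor $f(u)$ from $V$ into the coefficient) turns \eqref{37} into exactly \eqref{51}.

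The main task is therefore to check that the derivation of \eqref{37} carried out in Lemma \ref{k1} is insensitive to the sign of $f$. Inspecting the computations \eqref{12}--\eqref{19}, the only ingredients used are: the chain and Leibniz rules of Lemma \ref{rule} (which require only $u > 0$, as assumed in Lemma \ref{k4}); the substitution $\d u = -f(u)$ from \eqref{GE} and of \eqref{wle} for $\d w$; and the measure-theoretic Bochner inequality of Lemma \ref{Bochner} applied to $w$, whose regularity follows from Lemma \ref{reg}. None of these steps invokes a definite sign of $f$. The main bookkeeping point, and the only place where one might worry, is the analogue of \eqref{16}: combining Bochner with \eqref{wle} produces cross terms $\langle \nabla w,\nabla(f(u)/u)\rangle$ which must be reduced to expressions in $|\nabla w|^2/w^2$ and $uf'/f$ via the identity $\nabla u = -\tfrac{1}{\b} u^{1+\b}\nabla w$ (valid whenever $u > 0$); this is a purely algebraic rearrangement and is sign-agnostic. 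Once this is verified, reading off the coefficients at $\e = \g = d = 0$ furnishes \eqref{51} on $B(x_0,\tfrac{3}{2}R)$.
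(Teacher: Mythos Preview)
Your proposal is correct and follows exactly the paper's own approach: the paper does not give a separate proof of Lemma~\ref{k4} but simply remarks that one should ``reconsider Lemma~\ref{k1} and set $\e=\g=d=0$'' and observe that the positivity condition on $f$ was never used in that proof. Your verification that the coefficients specialize to $U=\tfrac{2}{N}(1+\tfrac{1}{\b})^2-2$, $V\cdot\tfrac{f}{u}=(\tfrac{4}{N}(1+\b)+2)\tfrac{f}{u}-2f'$, $W=\tfrac{2\b^2}{N}$, together with the observation that with $d=0$ the auxiliary function $F=|\nabla w|^2/w^2$ contains no $f$-dependent piece (so the potentially problematic ratio $uf'/f$ in \eqref{19} disappears), is precisely the argument the paper has in mind.
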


Through careful manipulation of the coefficients in \eqref{51}, a similar process to the proof of Lemma \ref{k3} yields the following result.
\begin{lem}\label{k5}
	Let $u$ be a positive solution of \eqref{GE} on $B(x_0,2R)$. The $w$ is defined by equation \eqref{tran}, and $F$ is its first kind auxiliary function as described in equation \eqref{F} with $\e=\g=d=0$. If there exists $\a\in(1,p(N))$ such that $t^{-\a}f(t)$ is non-increasing on $(0,\infty)$,  then  there exists a $ \b=\b(N,\a)>0$ such that
	\begin{eqnarray}
	\Delta F&\ge&-2KF+2\left(\frac{1}{\b}-1\right)\left\langle\nabla F,\nabla \ln w\right\rangle+LF^2
	\end{eqnarray}
	on $B(x_0,\frac{3}{2}R)$, where $L=L(N,\a)>0$.
\end{lem}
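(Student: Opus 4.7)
My plan is to adapt the strategy behind Lemma \ref{k3} to the weaker setting where $f$ need not be positive, replacing the two-sided bound on $tf'/f$ by the one-sided bound encoded in the monotonicity of $t^{-\alpha}f(t)$. I would take as my starting point Lemma \ref{k4} and observe that when $d=0$ the auxiliary function satisfies $F = |\nabla w|^2/w^2$, so the three lower-order terms on the right-hand side of \eqref{51} form a homogeneous quadratic form $Q$ in the two scalar variables $F$ and $f(u)/u$.

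Differentiating the non-increasing function $t \mapsto t^{-\alpha}f(t)$ yields the pointwise bound $tf'(t) \le \alpha f(t)$ for all $t>0$, valid \emph{regardless} of the sign of $f$. Using this to bound the cross coefficient $\bigl(\tfrac{4}{N}(1+\beta)+2\bigr)\tfrac{f(u)}{u} - 2f'(u)$ from below by $V_0 \cdot f(u)/u$ with $V_0 := \tfrac{4}{N}(1+\beta)+2-2\alpha$, the inequality of Lemma \ref{k4} becomes
\begin{equation*}
\Delta F \ge -2KF + 2\Big(\tfrac{1}{\beta}-1\Big)\langle\nabla F,\nabla \ln w\rangle + U F^2 + V_0\, F\, \tfrac{f(u)}{u} + W\Big(\tfrac{f(u)}{u}\Big)^2,
\end{equation*}
where $U = \tfrac{2}{N}(1+\tfrac{1}{\beta})^2-2$ and $W = \tfrac{2\beta^2}{N} > 0$. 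Since $F\ge 0$ while $f(u)/u \in \mathbb{R}$ is unconstrained in sign, minimizing the quadratic form over $y=f(u)/u$ (which is permissible because $W>0$) gives the scalar bound $Q \ge \bigl(U - V_0^2/(4W)\bigr)F^2$. Thus it is enough to exhibit $\beta = \beta(N,\alpha)>0$ with $U - V_0^2/(4W) > 0$ and then take $L = L(N,\alpha) \in \bigl(0,\,U - V_0^2/(4W)\bigr)$.

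The main obstacle, and the reason the hypothesis $\alpha < p(N)$ appears, is the explicit algebraic check that such a $\beta$ exists precisely when $\alpha < p(N)$. Expanding $UW > V_0^2/4$ reduces to the one-variable quadratic inequality in $\beta$
\begin{equation*}
4\beta^2 + 4(1-\alpha)\beta + 4(1-\alpha) + N(1-\alpha)^2 < 0,
\end{equation*}
whose discriminant (writing $c = 1-\alpha < 0$) is $-16\bigl[(N-1)c^2 + 4c\bigr]$, strictly positive iff $\alpha < 1 + \tfrac{4}{N-1} = p(N)$ for $N>1$; the case $N=1$ is immediate since $p(1)=\infty$ and the discriminant reduces to $-64c > 0$. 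A quick sign check (using $-c > 0$) shows the solution interval for $\beta$ meets $(0,\infty)$, which yields the desired $\beta(N,\alpha)$ and $L(N,\alpha)>0$ and closes the argument. Note that $UW > 0$ together with $W>0$ forces $U>0$, so the two sign conditions for the minimization step are automatic.
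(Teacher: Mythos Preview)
Your proof is correct and actually cleaner than the paper's. The paper argues in two cases according to whether $\alpha\le 1+\tfrac{4}{N}$ or $\alpha\in(1+\tfrac{4}{N},p(N))$: in each case it peels off a piece of the coefficient of $F^2$, applies the AM--GM bound $(U-l)F^2 + W(\tfrac{f}{u})^2 \ge 2\sqrt{(U-l)W}\,|\tfrac{f}{u}|\,F$ to manufacture an \emph{absolute value} $|f(u)/u|$, and then checks directly that the combined coefficient in front of $\tfrac{f}{u}F$ (using both the sign-dependent AM--GM contribution and the original cross term) dominates $2\alpha\tfrac{f}{u}$, so that the monotonicity hypothesis $f'\le \alpha f/u$ kills the remainder. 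Your route instead applies the monotonicity hypothesis \emph{first} to replace $f'$ by $\alpha f/u$ (valid regardless of the sign of $f$, since $F\ge 0$), and then disposes of the sign ambiguity in $f/u$ in one stroke by minimizing the resulting homogeneous quadratic in $y=f(u)/u$. This replaces the paper's case split and sign discussion by a single discriminant computation, and makes the role of the threshold $p(N)=1+\tfrac{4}{N-1}$ completely transparent: it is exactly the condition for the quadratic $4\beta^2+4(1-\alpha)\beta+4(1-\alpha)+N(1-\alpha)^2$ to take negative values, with vertex at $\beta=\tfrac{\alpha-1}{2}>0$. The paper's approach has the minor advantage of producing explicit choices of $\beta$ in each regime, but yours gives the full admissible interval for $\beta$ and is shorter.
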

\begin{proof}
	We divide the proof into two cases.
	
	Case 1: $\a\in(1,1+\frac{4}{N}]$.
We can choose $\b=\b(N,\a)=\min(\frac{1}{N},\frac{N}{2}(\a-1))$, by basic inequality, then we have	
	\begin{eqnarray}\label{ba}
	&&	\left(\frac{2}{N}\left(1+\frac{1}{\b}\right)^2-4\right)\frac{|\nabla w|^4}{w^4}+\frac{2\b^2}{N}\left(\frac{f(u)}{u}\right)^2\nonumber\\
	&&\ge\frac{4}{N}\sqrt{(1+\b)^2-2n\b^2}\cdot\left|\frac{f(u)}{u}\right|\cdot\frac{|\nabla w|^2}{w^2}\ge\frac{4}{N}\left|\frac{f(u)}{u}\right|\cdot\frac{|\nabla w|^2}{w^2}.
	\end{eqnarray}	
	Substituting \eqref{ba}	into \eqref{51} gives 
	\begin{eqnarray}
	\d F&\ge& -2KF+2\left(\frac{1}{\b}-1\right)\left\langle\nabla F,\nabla\ln w\right\rangle+2F^2	\nonumber\\
	&&+\left(\left(\frac{4}{N}(1+\b)+2\right)\frac{f(u)}{u}+\frac{4}{N}\left|\frac{f(u)}{u}\right|-2f'(u)\right)\frac{|\nabla w|^2}{w^2}.
	\end{eqnarray}
	One can see
	\begin{equation}
	\left(\frac{4}{N}(1+\b)+2\right)\frac{f(u)}{u}+\frac{4}{N}\left|\frac{f(u)}{u}\right|-2f'(u)\ge 2\a\frac{f(u)}{u}-2f'(u)\ge 0,
	\end{equation}
	which is due to that $t^{-\a}f(t)$ is non-increasing.
	
	Case 2: $\a\in(1+\frac{4}{N},p(N))$. As \eqref{ba}, for any $l\in(2,\infty)$, we have
	\begin{equation}\label{2ba}
	\left(\frac{2}{N}\left(1+\frac{1}{\b}\right)^2-l\right)\frac{|\nabla w|^4}{w^4}+\frac{2\b^2}{N}\left(\frac{f(u)}{u}\right)^2\ge\frac{4}{N}\sqrt{(1+\b)^2-\frac{Nl}{2}\b^2}\cdot\left|\frac{f(u)}{u}\right|\cdot\frac{|\nabla w|^2}{w^2}.
	\end{equation}
	Combining \eqref{2ba} and \eqref{51}, we obtain
	\begin{eqnarray}\label{2bb}
	\d F&\ge& -2KF+2\left(\frac{1}{\b}-1\right)\left\langle\nabla F,\nabla\ln w\right\rangle+(l-2)F^2	\nonumber\\
	&&+\left(\left(\frac{4}{N}(1+\b)+\frac{4}{N}\sqrt{(1+\b)^2-\frac{Nl}{2}\b^2}+2\right)\frac{f(u)}{u}-2f'(u)\right)\frac{|\nabla w|^2}{w^2}.
	\end{eqnarray}
	For any $l\in(2,\infty)$ and $N\ge 1$, we define
	\begin{equation}
	g(\b,N,l)=\frac{4}{N}(1+\b)+\frac{4}{N}\sqrt{(1+\b)^2-\frac{Nl}{2}\b^2}+2.
	\end{equation}
	One can see $g(\cdot,N,l)$ is increasing on $[0,\frac{4}{Nl-2}]$ and $g(0,N,l)\equiv 2+\frac{8}{N}$, $g(\frac{4}{Nl-2},n,l)=2+\frac{8l}{Nl-2}$. Now, we choose $l\in(2,\infty)$ such that $\frac{4l}{Nl-2}= \a-1$ because $\a\in(1+\frac{4}{N},p(N))$. For this fixed $l$ and set $\b=\frac{4}{Nl-2}$, then \eqref{2bb} becomes
	\begin{eqnarray}\label{2bc}
	\d F\ge-2KF+2\left(\frac{1}{\b}-1\right)\left\langle\nabla F,\nabla\ln w\right\rangle
	+(l-2)F^2	+\left(2\a\frac{f(u)}{u}-2f'(u)\right)\frac{|\nabla w|^2}{w^2}.
	\end{eqnarray}
	By the condition again, we completed the proof with $L=l-2$.
	
\end{proof}

Using Lemma  \ref{k5} and following the same argument as in the proof of Theorem \ref{main}, one can prove Theorem \ref{main1}. We will omit the redundant details.

\section{Relations among basic estimates}
In the section, we investigate the relationships among three fundamental estimates of positive solutions for equation \eqref{GE}.  These results have their own significance beyond their application in Section 7, as they may be obtained through other methods such as integral estimate methods.

Throughout this section, we assume that \X\,\,be a \RCDn\,space with $N\in[1,\infty)$ and $K\ge 0$, and let $u$ be a positive solution of equation \eqref{GE} on $B(x_0,R)$. We will explore the following estimates:\\
{\rm (1)} Universal boundedness estimate:
\begin{equation}\label{ube}
\sup\limits_{B(x_0,R)}\frac{f(u)}{u}\le C_U\left(K+\frac{1}{R^2}\right).
\end{equation}
{\rm (2)} Logarithmic gradient estimate:
\begin{equation}\label{log}
\sup\limits_{B(x_0,R)}\frac{|\nabla u|^2}{u^2}\le C_L\left(K+\frac{1}{R^2}\right).
\end{equation}	
%{\rm (3)} Curvature-dependent Harnack inequality:
%\begin{equation}\label{cdhk}
%\sup\limits_{B(x_0,R)} u\le e^{C_{CD}\left(\sqrt{K}R+1\right)}\cdot\inf\limits_{B(x_0,R)}u.
%\end{equation}
{\rm (3)} Harnack inequality:
\begin{equation}\label{cfhk}
\sup\limits_{B(x_0,R)} u\le C_{H}\inf\limits_{B(x_0,R)}u.
\end{equation}
%Here $C$ is a positive constant which only depends on $f$ and $N$.
Here, constants $C_U, C_L, C_H$ do not depend on solutions.

For unified description, we provide the following definition.
\begin{definition}
	For any $\kappa\in[1,\infty)$, we say that estimate (I) $\kappa$-implies estimate  (II)  if the validity of  estimate (I) on $B(x_0,\kappa R)$ implies the validity of estimate (II) on $B(x_0,R)$. We also say that estimate (I) and (II) are $\kappa$-equivalent if they  $\kappa$-imply each other.
\end{definition}
\begin{lem}\label{l31}
%	Let $(X,d,\mu)$ be a \RCDn\,\,space with $N\in[1,\infty)$ and $K\ge 0$. If $u$ is a positive solution of \eqref{GE} on $B(x_0,2R)\subset X$ and 
If there exists a $c\in\mathbb{R}$ such that $t^{-c}f(t)$ is non-increasing and $f(t)\ge0$ when $t>0$, then universal boundedness estimate $2$-implies logarithmic gradient estimate.	Moreover, if \eqref{ube} holds with constant $C_U$, then \eqref{log} holds with $C_L=C_L(N,c,C_U)$.
\end{lem}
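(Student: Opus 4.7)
The plan is to rerun the Bernstein / weak-maximum-principle argument from the proof of Theorem \ref{main} on the first-kind auxiliary function $F$ with $\e=\g=d=0$, feeding the hypothesised universal bound on $f(u)/u$ into the single place where a sign-indefinite coefficient appears.

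First I would choose a small $\beta=\beta(N)>0$ so that $U:=\frac{2}{N}(1+1/\beta)^2-2>0$; this is automatic for $N=1$ and amounts to $\beta<1/(\sqrt{N}-1)$ otherwise. Setting $w=u^{-\beta}$, so that $F:=|\nabla w|^2/w^2=\beta^2|\nabla u|^2/u^2$, Lemma \ref{k4} yields on $B(x_0,3R/2)$
\begin{equation}
\Delta F \;\ge\; -2KF + 2\Big(\frac{1}{\beta}-1\Big)\langle \nabla F,\nabla \ln w\rangle + UF^2 + \tilde A(u)\,\frac{f(u)}{u}\,F + \frac{2\beta^2}{N}\Big(\frac{f(u)}{u}\Big)^2,
\end{equation}
with $\tilde A(u)=\frac{4}{N}(1+\beta)+2-2uf'(u)/f(u)$. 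The assumption that $t^{-c}f(t)$ is non-increasing gives $tf'(t)\le cf(t)$, hence $\tilde A(u)\ge A_0:=\frac{4}{N}(1+\beta)+2-2c$, a constant depending only on $N$ and $c$ (at points where $f(u)=0$ the cross term vanishes, so this pointwise inequality is harmless there).

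The key step is to handle the cross term. Using $f\ge 0$ together with the assumed universal bound $f(u)/u\le M:=C_U(K+1/(2R)^2)$ on $B(x_0,2R)$,
\begin{equation}
\tilde A(u)\,\frac{f(u)}{u}\,F \;\ge\; \min(A_0,0)\,M\,F \;=\; -C_1\,M\,F,
\end{equation}
where $C_1=\max(0,-A_0)=C_1(N,c)$. Dropping the non-negative term in $(f/u)^2$ leaves
\begin{equation}
\Delta F \;\ge\; -(2K+C_1 M)F + 2\Big(\frac{1}{\beta}-1\Big)\langle\nabla F,\nabla\ln w\rangle + UF^2,
\end{equation}
which has exactly the structure of the decisive inequality in Section~4, with $2K$ replaced by $2K+C_1 M$.

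From here I would copy the Section~4 argument verbatim: form $A:=\Phi F$ with the cutoff $\Phi$ of Lemma \ref{cut off}, invoke the weak maximum principle (Lemma \ref{MMP}) at a near-maximum point of $A$ inside $B(x_0,5R/4)$, absorb the drift $\langle \nabla F,\nabla \ln w\rangle$ via Cauchy--Schwarz into a fraction of $UF^2$, and exploit $|\nabla\Phi|^2/\Phi\le C/R^2$ together with the Laplacian bound on $\Phi$. The output is $UA^2\le C(N)[K+M+R^{-2}+\sqrt K/R]\,A$, and then $M\le C_U(K+1/R^2)$ converts this into $\sup_{B(x_0,R)}F\le C(N,c,C_U)(K+1/R^2)$. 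Translating back via $F=\beta^2|\nabla u|^2/u^2$ yields \eqref{log} with $C_L=C_L(N,c,C_U)$. I do not foresee a genuine obstacle: the cutoff and maximum-principle machinery is identical to Theorem \ref{main}'s, and the only new idea is the trivial observation that the lone sign-indefinite cross term can be tamed by the assumed universal upper bound; using that bound on a strictly larger ball than $B(x_0,R)$ is precisely what forces $\kappa=2$.
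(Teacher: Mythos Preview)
Your proposal is correct and follows essentially the same strategy as the paper: derive an elliptic inequality for the logarithmic gradient quantity, absorb the sign-indefinite cross term using the assumed universal bound together with $f\ge 0$ and $tf'(t)\le cf(t)$, then run the cutoff/weak-maximum-principle machinery of Section~4. The only difference is cosmetic---the paper works directly with $v=\ln u$ and $H=|\nabla v|^2$ (applying Bochner to $v$ to obtain \eqref{61}), whereas you use the power substitution $w=u^{-\beta}$ and invoke Lemma~\ref{k4}; since $F=\beta^2 H$, the two auxiliary functions coincide up to a constant and the arguments are interchangeable.
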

\begin{proof}
	We just need to prove that if  $u$ is a positive solution of \eqref{GE} on $B(x_0,2R)$ and satisfies \eqref{ube} on $B(x_0,2R)$, then \eqref{log} holds on $B(x_0,R)$.
	
If we set $v=\ln u$, then $v$ is a weak solution of
\begin{equation}\label{veq}
	\d v+|\nabla v|^2+\frac{f(u)}{u}=0.
\end{equation}
Now, we define $H=|\nabla v|^2$, then Bochner inequality in Lemma \ref{Bochner} gives
\begin{eqnarray}
	\d H&\ge&\frac{2}{N}\left(\d v\right)^2+2\langle\nabla\d v,\nabla v\rangle-2KH\nonumber\\
	&=&\frac{2}{N}\left(\d v\right)^2-2\langle\nabla H,\nabla v\rangle-2\left(f'(u)-\frac{f(u)}{u}\right)H-2KH\nonumber\\
	&\ge&\frac{2}{N}H^2-2\langle\nabla H,\nabla v\rangle-2\left(f'(u)-\left(1+\frac{2}{N}\right)\frac{f(u)}{u}\right)H-2KH
\end{eqnarray}
on $B(x_0,\frac{3}{2}R)$.
Notice that $t^{-c}f(t)$ is non-increasing  implies $f'(u)\le c\frac{f(u)}{u}$. Hence
\begin{equation}\label{61}
	\d H\ge\frac{2}{N}H^2-2\langle\nabla H,\nabla v\rangle+2\left(1+\frac{2}{N}-c\right)\frac{f(u)}{u}H-2KH
\end{equation}
on $B(x_0,\frac{3}{2}R)$. Because $u^{-1}f(u)$ is non-negative on $B(x_0,2R)$ and condition \eqref{ube} holds, we can rewrite \eqref{61} as
\begin{equation}\label{62}
\d H\ge\frac{2}{N}H^2-2\langle\nabla H,\nabla v\rangle-C(N,c,C_U)\left(K+R^{-2}\right)H-2KH
\end{equation}
on $B(x_0,\frac{3}{2}R)$. Then based on \eqref{62}, we use similar argument as  proof of Theorem \ref{main} to get logarithmic gradient estimate. For completeness, we give its details as follows.

Let $A:=\Phi H$ be the auxiliary function, where $\Phi$ is an undetermined cut-off function as Lemma \ref{cut off}
by choosing $\alpha=\frac{M_1}{2M_2}$, where $M_1=\sup\limits_{B(x_0,R)} A$ and $M_2=\sup\limits_{B(x_0,\frac{3}{2}R)} A$. Without loss of generality, we may assmue that $M_1>0$ or proof is finished. By Radon--Nikodym decomposition, we get 
\begin{equation}\label{6sing eq}
\Delta^{s}A=\Delta^s\Phi \cdot H+\Phi\cdot \Delta^s H\ge 0
\end{equation}
on $B(x_0,\frac{3R}{2})$ because the singular part of $\d \Phi$ and $\d H$ are non-negative by Lemma \ref{cut off} and \eqref{62}.
By chain rule and $\Phi\ge\alpha$, as before, we  have
\begin{eqnarray}\label{6aeq}
\d^{ac}A=\Delta^{ac} \Phi\cdot H+\d^{ac}H\cdot \Phi+2\left\langle\frac{\nabla \Phi}{\Phi} ,\nabla A \right\rangle-2A\cdot\frac{|\nabla \Phi|^2}{\Phi^2}.
\end{eqnarray}
Substituting \eqref{62} into \eqref{6aeq}, we get
\begin{eqnarray}\label{642}
\d^{ac}A&\ge& -2KA-2\left\langle\nabla A,\nabla v\right\rangle+\frac{2}{N}\Phi H^2-C(N,c,C_U)\left(K+R^{-2}\right)A\nonumber\\
&&+2H\left\langle\nabla \Phi,\nabla v\right\rangle+\Delta^{ac} \Phi\cdot H+2\left\langle\frac{\nabla \Phi}{\Phi} ,\nabla A \right\rangle-2A\cdot\frac{|\nabla \Phi|^2}{\Phi^2}.
\end{eqnarray}
Cauchy-Schwarz inequality and basic equality give
\begin{eqnarray}\label{6bb}
2H\left\langle\nabla \Phi,\nabla v\right\rangle\ge-\frac{1}{N}\Phi H^2-N\frac{|\nabla \Phi|^2}{\Phi}H.
\end{eqnarray}
Substituting \eqref{6bb} into \eqref{642} yields
\begin{eqnarray}
\d^{ac}A+\left\langle\nabla A,\nabla B\right\rangle&\ge& \Delta^{ac} \Phi\cdot H-2KA+\frac{1}{N}\Phi H^2\nonumber\\
&&-2\left(N+1\right)\frac{|\nabla \Phi|^2}{\Phi^2}A-C(N,c,C_U)\left(K+R^{-2}\right)A,\nonumber
\end{eqnarray}
where $B=2v-2\ln \Phi$.

Notice that $A$ achieves its strict maximum in $B(x_0,\frac{5}{4}R)$ in the sense of Lemma \ref{MMP}. Therefore, by Lemma \ref{MMP}, we have a sequence $\{x_j\}_{j=j_0}^{\infty}\subset B(x_0,\frac{3}{2}R)$ such that 
\begin{equation}
A_j=A(x_j)\ge \sup\limits_{B(x_0,\frac{3}{2}R)}A-\frac{1}{j}>0
\end{equation}
and 
\begin{eqnarray}\label{643}
\frac{1}{j}&\ge& \Delta^{ac} \Phi\cdot H(x_j)-2KA(x_j)+\frac{1}{N}\Phi H^2(x_j)\nonumber\\
&&-2\left(N+1\right)\frac{|\nabla \Phi|^2}{\Phi^2}A(x_j)-C(N,c,C_U)\left(K+R^{-2}\right)A(x_j).
\end{eqnarray}
Multiplying $\Phi(x_j)$ on both sides of \eqref{643} gives
\begin{eqnarray}\label{644}
\frac{1}{j}&\ge& \Delta^{ac} \Phi\cdot A(x_j)-2K\Phi A(x_j)+\frac{1}{N}A^2(x_j)\nonumber\\
&&-2\left(N+1\right)\frac{|\nabla \Phi|^2}{\Phi}A(x_j)-C(N,c,C_U)\left(K+R^{-2}\right)\Phi A(x_j).
\end{eqnarray}
Hence, by the property of $\Phi$ as before, we have
\begin{equation}\label{645}
\frac{1}{j}\ge \frac{1}{N}A^2_j-2KA_j-C(N)\left(\frac{\sqrt{K}}{R}+\frac{1}{R^2}\right)A_j-C(N,c,C_U)\left(K+R^{-2}\right) A_j.
\end{equation}
 Letting $j\to\infty$ in \eqref{645}, we obtain
\begin{equation}
\sup\limits_{B(x_0,\frac{3}{2}R)}A\le C(N,c,C_U)\left(K+\frac{1}{R^2}\right),
\end{equation}
which implies
\begin{equation}
\sup\limits_{B(x_0,R)}\frac{|\nabla u|^2}{u^2}\le C(N,c,C_U)\left(K+\frac{1}{R^2}\right).
\end{equation}
We completed the proof.

\end{proof}
\begin{rmk}
\rm	Notice that if $c\in (1,p(N))$, by Theorem \ref{main1}, we automatically have the logarithmic gradient estimate \eqref{log} without the condition "$f(u)\ge0$ when $u>0$". When $c\ge p(N)$, from \eqref{61}, we also obtain \eqref{62} by the condition \eqref{ube}, and the following proof does not require the condition $f(u)\ge 0$. Therefore,  if $c>1$, we can remove the condition "$f(u)\ge0$ when $u>0$".
\end{rmk}

To our knowledge, the logarithmic gradient estimate on $B(x_0,R)$ cannot directly derive the Harnack inequality of positive weak solutions, as it may not hold along every geodesic. Therefore, we add a continuity assumption on the solutions to ensure the Harnack inequality. Here, we adopt Garofalo--Mondino's argument in \cite[Proof of Theorem 1.4]{GM}.

\begin{lem}\label{logtohk}
If the positive weak solution of \eqref{GE} is continuous,	then logarithmic gradient estimate $1$-implies Harnack inequality. Moreover, if \eqref{log} holds with constant $C_L$, then \eqref{cfhk} holds with $C_H=e^{2\sqrt{C_L(KR^2+1)}}$.
\end{lem}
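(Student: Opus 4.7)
The plan is to turn the pointwise gradient bound on $\ln u$ into a genuine Lipschitz bound, then integrate along a geodesic joining an arbitrary pair of points in $B(x_0,R)$. Setting $v = \ln u$, the hypothesis \eqref{log} is exactly the $L^\infty$ bound
\[
|\nabla v| \le \sqrt{C_L\left(K+\tfrac{1}{R^2}\right)} \qquad \mu\text{-a.e. on } B(x_0,R).
\]
Since $u$ is positive on compact subsets and continuous, $v$ is continuous, and in particular $v \in W^{1,2}_{loc}(B(x_0,R)) \cap L^\infty_{loc}(B(x_0,R))$.

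Next I would invoke the Sobolev-to-Lipschitz property, which holds on $\RCD$ spaces and upgrades the a.e.\ gradient bound to a genuine Lipschitz estimate: any $W^{1,2}_{loc}$ function whose minimal weak upper gradient is essentially bounded by $L$ admits an $L$-Lipschitz representative (for instance via the fact that $\RCD$ spaces are length spaces with the identification between minimal weak upper gradient and slope). Because two continuous functions coinciding $\mu$-a.e.\ on the support of $\mu$ must agree pointwise, $v$ itself is $L$-Lipschitz on $B(x_0,R)$ with $L = \sqrt{C_L(K+1/R^2)}$. This is the step that I expect to be the key technical issue and where I would lean on the $\RCD$ structure most heavily, following the argument in Garofalo--Mondino \cite{GM}; in particular one needs to check that the Sobolev-to-Lipschitz upgrade can be carried out on the ball $B(x_0,R)$ rather than only on the whole space (properness and the fact that $(X,d)$ is a geodesic metric space are used here).

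With the Lipschitz bound in hand, fix any $x, y \in B(x_0,R)$. By properness and the geodesic property of $\RCD$ spaces there is a length-minimizing geodesic connecting $x$ and $y$ of length $d(x,y) \le 2R$ (and this geodesic can be taken to lie in $B(x_0,R)$ for the purposes of the Lipschitz estimate on that ball, or one simply applies the global Lipschitz bound along any minimizing curve). Evaluating $v$ at the endpoints yields
\[
|v(x)-v(y)| \le L\, d(x,y) \le 2R\sqrt{C_L\left(K+\tfrac{1}{R^2}\right)} = 2\sqrt{C_L(KR^2+1)}.
\]
Exponentiating and taking the supremum and infimum of $u$ over $B(x_0,R)$ gives
\[
\sup_{B(x_0,R)} u \le e^{2\sqrt{C_L(KR^2+1)}} \inf_{B(x_0,R)} u,
\]
which is exactly \eqref{cfhk} with $C_H = e^{2\sqrt{C_L(KR^2+1)}}$, completing the proof.
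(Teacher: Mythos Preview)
Your approach is essentially the same as the paper's --- both turn the a.e.\ bound on $|\nabla \ln u|$ into an oscillation bound for $\ln u$ via integration along geodesics, and both point to Garofalo--Mondino for the mechanism. The paper carries this out directly with optimal geodesic plans between shrinking balls $B(x_0,r)$ and $B(x,r)$ and passes to the limit $r\to 0$ using continuity, whereas you invoke the Sobolev-to-Lipschitz property as a black box; these are two packagings of the same idea.

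There is one genuine soft spot in your write-up. You claim you can join arbitrary $x,y\in B(x_0,R)$ by a minimizing geodesic and apply the Lipschitz bound along it, but that geodesic need not remain in $B(x_0,R)$, and you only have $|\nabla v|\le L$ there; your parenthetical ``global Lipschitz bound'' does not exist. The Sobolev-to-Lipschitz upgrade on the ball only yields Lipschitz continuity with respect to the intrinsic distance of $B(x_0,R)$, not the ambient $d$. The fix is exactly what the paper does: compare every point to the center. A geodesic from $x_0$ to $x$ satisfies $d(\gamma_t,x_0)=t\,d(x_0,x)<R$, hence stays in the ball, giving $|v(x)-v(x_0)|\le L\,d(x,x_0)<\sqrt{C_L(KR^2+1)}$; the same for $y$; then combine to get $|v(x)-v(y)|\le 2\sqrt{C_L(KR^2+1)}$ and the stated $C_H$. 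With this correction your argument is complete and matches the paper's constant.
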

\begin{proof}
 For fixed $x\in B(x_0,R)$ and $r\in(0,R-d(x_0,x))$, we define a map $\rho:AC^2([0,1],X)\times [0,1]\to X$ as
\begin{equation}
\rho(l,\tau)=l(\tau),
\end{equation} 
and the function $\kappa:[0,1]\rightarrow\mathbb{R}$ as
\begin{equation}
\kappa(\tau)= \ln u(\rho(l,\tau)).
\end{equation}
Meanwhile, we define (recall that we assume $supp(\mu)=X$)
\begin{equation}
\mu_0^r=\mu_0(B(x_0,r))^{-1}\cdot\mu_0|_{B(x_0,r)}\quad\text{and}\quad 	\mu_1^r=\mu_1(B(x,r))^{-1}\cdot\mu_1|_{B(x,r)}.
\end{equation} 

Let $\P^r\in OptGeo(\mu_0^r,\mu_1^r)$, then the logarithmic gradient estimate in \eqref{log} implies
\begin{equation}\label{kk}
\frac{|\nabla u|^2}{u^2}(\rho(l,\tau))\le C_L\left(K+\frac{1}{R^2}\right)
\end{equation}
holds for $\P^r$-a.e. $l$ and each $\tau\in[0,1]$.

By chain rule and \eqref{kk}, we get
\begin{eqnarray}\label{kk2}
&&\int\kappa(1)-\kappa(0)d\P^r=\int\int_{0}^{1}\kappa'(\tau)d\tau d\P^r\nonumber\\
&\le& \int\int_{0}^{1}|\nabla\ln u|\cdot |\dot{l}|d\tau d\P^r\nonumber\\
&\le&\sqrt{C_L\left(K+\frac{1}{R^2}\right)}\int d(l(0),l(1))d\P^r.
\end{eqnarray}
Notice that  $\P^r$ is concentrated along geodesic connecting points of $B(x_0,r)$ and points of $B(x,r)$ (see \cite{GM}) and continuity of solution, we get
\begin{equation}
\lim\limits_{r\to 0}\int\kappa(1)-\kappa(0)d\P^r
%=\lim\limits_{r\to 0}\int e^{-as}f(l(1),s)-e^{-at}f(l(0),t)d\P^r
=\ln\left(\frac{u(x)}{u(x_0)}\right)
\end{equation}
and 
\begin{equation}
\lim\limits_{r\to 0}\text{RHS of \eqref{kk2}}= \sqrt{C_L\left(K+\frac{1}{R^2}\right)}\int d(x_0,x)\le\sqrt{C_L\left(KR^2+1\right)}.
\end{equation}
That is, $u(x)\le e^{\sqrt{C_L\left(KR^2+1\right)}}u(x_0)$. Same inequality holds by $x_0,y$ substituting $x,x_0$,
and hence we derive the desired Harnack inequality \eqref{cfhk}.

\end{proof}

\begin{lem}\label{l61}
	%	Let $(X,d,\mu)$ be a \RCDn\,\,space with $N\in[1,\infty)$ and $K\ge 0$. If $u$ is a positive solution of \eqref{GE} on $B(x_0,2R)\subset X$ and 
	We assume that the nonlinear term $f$ satisfies:\\
	{\rm (1)} $f(t)>0$ on $(0,\infty)$,\\
	{\rm (2)} $\Lambda<p_S(N)$,\\
	{\rm (3)} $\frac{tf'}{f}$ is non-decreasing on $(0,\infty)$.\\
	Then  Harnack inequality \eqref{cfhk} $2$-implies universal boundedness estimate \eqref{ube}. Moreover, if \eqref{cfhk} holds with constant $C_{H}$, then \eqref{ube} holds with $C_U=C(N,\Lambda,C_{H})$.	
	
\end{lem}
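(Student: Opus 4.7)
The plan is to derive an $L^1$-mean bound on $f(u)/u$ by testing the log-form of \eqref{GE}, then convert this mean bound into a pointwise sup bound using the Harnack inequality together with the monotonicity of $tf'/f$, and finally invoke a dichotomy on $\sqrt{K}R$ to reach the desired form $C_U(K+R^{-2})$.

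Since $u$ is positive on $B(x_0,2R)$, the chain rule in Lemma~\ref{rule} shows that $v:=\log u$ lies in $W^{1,2}_{loc}(B(x_0,2R))$ and satisfies
\begin{equation*}
\Delta v+|\nabla v|^2+\frac{f(u)}{u}=0
\end{equation*}
distributionally. Choose a Lipschitz cut-off $\phi$ as in Lemma~\ref{cut off} with $\phi\equiv 1$ on $B(x_0,R)$, $\mathrm{supp}\,\phi\subset B(x_0,2R)$ and $|\nabla\phi|\le C(N)/R$. Testing against $\phi^2$, integrating by parts, and applying Young's inequality $2\phi\langle\nabla v,\nabla\phi\rangle\le\phi^2|\nabla v|^2+|\nabla\phi|^2$ to absorb the gradient term yields the mean estimate
\begin{equation*}
\int_{B(x_0,R)}\frac{f(u)}{u}\,d\mu\le\frac{C(N)}{R^2}\,\mu(B(x_0,2R)).
\end{equation*}

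By the Harnack inequality on $B(x_0,2R)$, the values of $u$ lie in $[m,M]$ with $M\le C_H m$. Integration of $(\log f)'(t)\le\Lambda/t$ from condition~(2) gives $f(t_2)/f(t_1)\le(t_2/t_1)^\Lambda$ for $t_1\le t_2$, and combining this with condition~(3) and a short case analysis (the monotonicity of $tf'/f$ forces $f(t)/t$ to be either monotone on $[m,M]$ or to admit at most one interior minimum) yields the oscillation bound $\sup_{B(x_0,R)} f(u)/u\le C(\Lambda,C_H)\inf_{B(x_0,R)} f(u)/u$. Combining with the mean estimate above produces
\begin{equation*}
\sup_{B(x_0,R)}\frac{f(u)}{u}\le C(N,\Lambda,C_H)\cdot\frac{\mu(B(x_0,2R))}{R^2\,\mu(B(x_0,R))}.
\end{equation*}
To bring this into the form $C_U(K+R^{-2})$, dichotomize on $\sqrt{K}R$. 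When $\sqrt{K}R\le 1$, Bishop--Gromov volume comparison bounds the volume ratio by $C(N)$ and $R^{-2}\ge K$, finishing this case. When $\sqrt{K}R>1$, set $R':=1/\sqrt{K}<R$; since Harnack on $B(x_0,2R)$ restricts to Harnack on $B(x_0,2R')$ with the same constant $C_H$, applying the previous case on $B(x_0,R')$ gives $\sup_{B(x_0,R')} f(u)/u\le C(N,\Lambda,C_H)\,K$, and the oscillation bound on $B(x_0,2R)$ transfers this back to $B(x_0,R)$.

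The main obstacle is the oscillation estimate. Condition~(2) supplies only an upper bound $\Lambda$ on $tf'/f$, and the matching lower estimate on $f(t_2)/f(t_1)$ for $t_1\le t_2$ in the Harnack range $[m,C_H m]$ depends on $\inf_{[m,M]} tf'/f=(tf'/f)(m)$, which is \emph{a priori} unconstrained by $\Lambda$ and $C_H$. Resolving this requires showing that a highly negative $(tf'/f)(m)$ would force $f(m)/m$ to be disproportionately large compared with the mean $\int_{B(x_0,R)} f(u)/u\,d\mu$ of Step~1; the Harnack quasi-constancy of $u$ together with continuity makes $f(u)/u$ comparable to $f(m)/m$ on a $\mu$-non-negligible portion of $B(x_0,R)$, which in turn forces the needed quantitative control on $(tf'/f)(m)$ in terms of $\Lambda$, $C_H$ and $N$.
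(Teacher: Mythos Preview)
Your approach differs fundamentally from the paper's, and the oscillation step contains a genuine gap that your proposed resolution does not close.

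The paper uses no integral mean estimate. It runs the Bernstein method with the first-kind auxiliary function $F = u^{-\b}\bigl(|\nabla w|^2/w^2 + d\,f(u)/u\bigr)$ (that is, $\g=1$, $\e=0$ in \eqref{F}, with $w=u^{-\b}$): via Lemma~\ref{k1} and a case analysis on $N,\Lambda$ one chooses $\b=\b(N,\Lambda)$ and $d=d(N,\Lambda)$ so that $\Delta F\ge -2KF + \tfrac{2}{\b}\langle\nabla F,\nabla\ln w\rangle + L\,u^{\b}F^2$ on $B(x_0,\tfrac32R)$. The cut-off and weak-maximum-principle machinery then yields
\[
\sup_{B(x_0,R)} u^{-\b}\Bigl(\frac{|\nabla u|^2}{u^2}+\frac{f(u)}{u}\Bigr)\le C(N,\Lambda)\Bigl(\inf_{B(x_0,2R)}u\Bigr)^{-\b}\Bigl(K+\frac{1}{R^2}\Bigr),
\]
and Harnack enters only at this last algebraic step via $(u/\inf u)^{\b}\le C_H^{\b}$. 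No oscillation control on $f(u)/u$ is ever invoked, and the hypothesis $\Lambda<p_S(N)$ is exactly what allows a choice of $\b$ making the Bernstein coefficients $U,V,W$ positive.

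Your argument, by contrast, hinges on the claim $\sup_{B_R} f(u)/u\le C(\Lambda,C_H)\inf_{B_R} f(u)/u$, and this is false under (1)--(3): those hypotheses impose no lower bound on $\lambda=\inf tf'/f$. Take $f$ with $tf'(t)/f(t)=-1/t$ near $0$ (e.g.\ $f(t)\sim e^{1/t}$ for small $t$, modified smoothly for large $t$ so that $\Lambda<p_S(N)$); then $tf'/f$ is non-decreasing, yet for $u$ with range $[m,C_Hm]$ one computes
\[
\frac{f(m)/m}{f(C_Hm)/(C_Hm)}\ \sim\ C_H\,\exp\!\Bigl(\frac{1-C_H^{-1}}{m}\Bigr)\ \longrightarrow\ \infty\qquad(m\to0^+),
\]
so no constant in $(N,\Lambda,C_H)$ bounds the oscillation. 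Your proposed rescue does not work: a very negative $(tf'/f)(m)$ does not by itself force $f(m)/m$ to be large (these are independent data of $f$), and even granting that, Harnack provides no quantitative lower bound on $\mu(\{u\text{ close to }m\})$, so the mean integral need not detect $f(m)/m$. A further warning sign is that your argument nowhere uses $\Lambda<p_S(N)$ in an essential way, whereas in the paper this threshold is precisely what makes the Bernstein scheme close.
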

\begin{proof}
	To prove that estimate \eqref{ube} holds on $B(x_0,R)$, under the assumption that positive solutions obey the Harnack inequality on $B(x_0,2R)$ and the nonlinear term satisfies $(1)$-$(3)$ in Lemma \ref{l61}, we proceed as follows.
	
	Define $w$ as \eqref{tran} and its first kind auxiliary function $F$ as \eqref{F} with $\e=0, \g=1$ and some undetermined $\b>0$, $d>0$, then Lemma \ref{k1} gives
	
	\begin{eqnarray}
\Delta F\ge-2KF+\frac{2}{\b}\left\langle\nabla F,\nabla \ln w\right\rangle+u^{-\b}\left(U\frac{|\nabla w|^4}{w^4}+V\frac{|\nabla w|^2}{w^2}\frac{f(u)}{u}+W\left(\frac{f(u)}{u}\right)^2\right)
\end{eqnarray}
on $B(x_0,\frac{3}{2}R)$, where
\begin{displaymath}
\begin{aligned}
U=&\,\left(\frac{2}{N}\left(1+\frac{1}{\b}\right)^2-1\right)\left(\frac{1}{\b}+1\right),\\
V=&\,\frac{4}{N}(1+\b)+\b+2\left(1-\frac{uf'}{f}\right)+d\left(\frac{1}{\b^2}\frac{u^2f''}{f}-\frac{2}{\b}\frac{uf'}{f}+1+\frac{1}{\b}\right),\\
W=&\,\frac{2\b^2}{N}+d\left(\b+1-\frac{uf'}{f}\right).\nonumber
\end{aligned}
\end{displaymath}	
Now, we claim that there exist $\b=\b(N,\Lambda)>0$, $d=d(N,\Lambda)>0$ and $L=L(N,\Lambda)>0$ such that 	
\begin{eqnarray}\label{claim}
\Delta F\ge-2KF+\frac{2}{\b}\left\langle\nabla F,\nabla \ln w\right\rangle+Lu^{\b}F^2
\end{eqnarray}
on $B(x_0,\frac{3}{2}R)$.

For demonstrating the claim, we divide the proof into three cases as follows.

First, we see that 	
\begin{equation}
	\left(\frac{tf'(t)}{f}\right)'\ge0
\end{equation}
is equivalent to	
\begin{equation}
	\frac{t^2f''}{f}\ge\frac{tf'}{f}\left(\frac{tf'}{f}-1\right),
\end{equation}
which implies 	
\begin{eqnarray}
	V\ge\frac{4}{N}(1+\b)+\b+2\left(1-\frac{uf'}{f}\right)+d\left(\frac{1}{\b}\frac{uf'}{f}-1\right)\left(\frac{1}{\b}\left(\frac{uf'}{f}-1\right)-1\right).
\end{eqnarray}

Case 1: $N\in[1,2]$. We choose $\b=\max(1,\Lambda)$ and $d=d(N,\Lambda)\in(0,1)$ such that 	$V\ge 1$ and $W\ge\frac{2}{N}$. Then we just set $L=\frac{1}{2}\min(U,V,W)$ and we get the desired result.

Case 2: $N\in(2,\infty)$ and $\Lambda<\frac{N+1}{N-2}$. We choose $\b=\b(N,\Lambda)\in(0,\frac{2}{N-2})$ such that 
\begin{eqnarray}
	V_0&=&\frac{4}{N}(1+\b)+\b+2\left(1-\Lambda\right)>0.
\end{eqnarray}
For this fixed $\b$, we choose $d=d(N,\Lambda)\in(0,1)$ such that	$V>\frac{V_0}{2}$ and $W\ge\frac{\b^2}{N}$.  Then we also set $L=\frac{1}{2}\min(U,V,W)$ and we get the desired result. 
	
Case 3: $N\in(2,\infty)$ and $\Lambda\in[\frac{N+1}{N-2},p_S(N))$. 
For obtaining the uniform lower bound of $V$, for any $\b\in(0,\frac{2}{N-2})$ and $d\in(0,\frac{2\b^2}{N(\Lambda-1-\b)}]$, we define the following quadratic polynomial	
\begin{eqnarray}
Q(x,\b,d)=\frac{4}{N}(1+\b)+\b+2\left(1-x\right)+d\left(\frac{x}{\b}-1\right)\left(\frac{x-1}{\b}-1\right).
\end{eqnarray}	
Notice that 
\begin{equation}
\text{the symmetry axis of}\,\,\, Q=\frac{1}{2}+\b+\frac{\b^2}{d}\ge \Lambda,
\end{equation}
where we use $\Lambda\in[\frac{N+1}{N-2},p_S(N))$. Therefore,
\begin{equation}
V\ge Q\left(\frac{uf'}{f},\b,d\right)\ge Q(\Lambda,\b,d)=\frac{4}{n}(1+\b)+\b+2\left(1-\Lambda\right)+\frac{d}{\b^2}\left(\Lambda-\b\right)\left(\Lambda-1-\b\right).
\end{equation}
If we choose $d_0=\frac{2\b^2}{N(\Lambda-1-\b)}$, then there exists $\b=\b(N,\Lambda)\in(0,\frac{2}{N-2})$ such that $Q(\Lambda,\b,d_0)\ge C_0>0$, where $C_0=C_0(N,\Lambda)$. For this fixed $\b$, we can choose $d=d(N,\Lambda)\in (0,d_0)$ such that $V\ge Q(\Lambda,\b,d)\ge \frac{C_0}{2}>0$ and $W>0$. Hence we completed the proof of claim.
	
	By the claim, we can obtain universal boundedness estimate as before.

Let $A:=\Phi F$ be the auxiliary function, where $\Phi$ is an undetermined cut-off function as in Lemma \ref{cut off}
by choosing $\alpha=\frac{M_1}{2M_2}$, where $M_1=\sup\limits_{B(x_0,R)} A$ ($>0$ or proof is finished) and $M_2=\sup\limits_{B(x_0,\frac{3}{2}R)} A$. First, by Radon--Nikodym decomposition, we get 
\begin{equation}\label{6s1}
\Delta^{s}A=\Delta^s\Phi \cdot F+\Phi\cdot \Delta^s F\ge 0
\end{equation}
on $B(x_0,\frac{3R}{2})$ because the singular part of $\d \Phi$ and $\d F$ are non-negative by Lemma \ref{cut off} and \eqref{claim}.
By chain rule and $\Phi\ge\alpha$,  we  have	
\begin{eqnarray}\label{622}
\d^{ac}A=\Delta^{ac} \Phi\cdot F+\d^{ac}F\cdot \Phi+2\left\langle\frac{\nabla \Phi}{\Phi} ,\nabla A \right\rangle-2A\cdot\frac{|\nabla \Phi|^2}{\Phi^2}.
\end{eqnarray}
Substituting \eqref{claim} into \eqref{622}, we obtain
\begin{eqnarray}\label{632}
\d^{ac}A&\ge& -2KA+\frac{2}{\b}\left\langle\nabla A,\nabla \ln w\right\rangle+Lu^{\b}\Phi F^2\nonumber\\
&&-\frac{2}{\b}F\left\langle\nabla \Phi,\nabla \ln w\right\rangle+\Delta^{ac} \Phi\cdot F+2\left\langle\frac{\nabla \Phi}{\Phi} ,\nabla A \right\rangle-2A\cdot\frac{|\nabla \Phi|^2}{\Phi^2}.
\end{eqnarray}	
Cauchy-Schwarz inequality and basic inequality again give	
	\begin{eqnarray}\label{csb}
		-\frac{2}{\b}F\left\langle\nabla \Phi,\nabla \ln w\right\rangle&\ge&-\frac{L}{2}\Phi F\frac{|\nabla w|^2}{w^2}-\frac{2}{\b^2 L}F\frac{|\nabla \Phi|^2}{\Phi}\nonumber\\
		&\ge&-\frac{L}{2}u^{\b}\Phi F^2-\frac{2}{\b^2 L}A\frac{|\nabla \Phi|^2}{\Phi^2}.
	\end{eqnarray}
	Combining \eqref{632} with \eqref{csb}, we have
\begin{equation}
	\d^{ac}A+\left\langle\nabla A,\nabla B\right\rangle\ge \frac{L}{2}u^{\b}\Phi F^2-2KA+\Delta^{ac} \Phi\cdot F-2\left(1+\frac{1}{\b^2 L}\right)A\cdot\frac{|\nabla \Phi|^2}{\Phi^2},
\end{equation}
	where $B=-2\ln \Phi-\frac{2}{\b}\ln w$.
	
Notice that $A$ achieves its strict maximum in $B(x_0,\frac{5}{4}R)$ in the sense of Lemma \ref{MMP}. Therefore, by Lemma \ref{MMP}, we have a sequence $\{x_j\}_{j=j_0}^{\infty}\subset B(x_0,\frac{3}{2}R)$ such that $u(x_j)\ge\inf\limits_{B(x_0,2R)}u>0$,
\begin{equation}
A(x_j)\ge \sup\limits_{B(x_0,\frac{3}{2}R)}A-\frac{1}{j}>0
\end{equation}
and 
\begin{eqnarray}\label{633}
\frac{1}{j}\ge\frac{L}{2}u^{\b}\Phi F^2(x_j)+\Delta^{ac} \Phi\cdot F(x_j)-2KA(x_j)-2\left(1+\frac{1}{\b^2 L}\right)\frac{|\nabla \Phi|^2}{\Phi^2}A(x_j).
\end{eqnarray}
Multiplying $\Phi(x_j)$ on both sides of \eqref{633} yields
\begin{eqnarray}\label{634}
\frac{1}{j}\ge\frac{L}{2}u^{\b} A^2(x_j)+ \Delta^{ac} \Phi\cdot A(x_j)-2K\Phi A(x_j)-2\left(1+\frac{1}{\b^2 L}\right)\frac{|\nabla \Phi|^2}{\Phi}A(x_j).
\end{eqnarray}	
  Using property of $\Phi$ and letting $j\to \infty$, we see	
	\begin{equation}
	\frac{L}{2}\left(\inf\limits_{B(x_0,2R)}u\right)^{\b}\sup\limits_{B(x_0,\frac{3}{2}R)}A\le C(N)\left(K+\frac{1}{R^2}\right),
	\end{equation}
	which implies
		\begin{equation}\label{636}
	\sup\limits_{B(x_0,R)}u^{-\b}\left(\frac{|\nabla u|^2}{u^2}+\frac{f(u)}{u}\right)\le C(N,\Lambda)\left(\inf\limits_{B(x_0,2R)}u\right)^{-\b}\left(K+\frac{1}{R^2}\right).
	\end{equation}
	Then combining Harnack inequality on $B(x_0,2R)$ and \eqref{636} derives 
	\begin{equation}\label{637}
	\sup\limits_{B(x_0,R)}\left(\frac{|\nabla u|^2}{u^2}+\frac{f(u)}{u}\right)\le C(N,\Lambda)C_{H}^{\b}\left(K+\frac{1}{R^2}\right).
	\end{equation}
	We completed the proof.

\end{proof}
\begin{rmk}
	\rm (I) If one carefully modifies  the property of cut-off function, one can find that  universal boundedness estimate $k$-implies logarithmic gradient estimate and Harnack inequality $k$-implies universal boundedness estimate for any $\kappa>1$ under same conditions of Lemma \ref{l31} and \ref{l61}, respectively.\\
	(II) If we divide Harnack inequality into two cases:\\
	(1) Curvature-dependent Harnack inequality:
	\begin{equation}\label{cdh}
	\sup\limits_{B(x_0,R)} u\le e^{C\left(\sqrt{K}R+1\right)}\cdot\inf\limits_{B(x_0,R)}u.
	\end{equation}
	(2) Curvature-free Harnack inequality:
	\begin{equation}\label{cdh}
	\sup\limits_{B(x_0,R)} u\le C\inf\limits_{B(x_0,R)}u.
	\end{equation}
	Here $C$ only depends on $N$ and the nonlinear term $f$.

 The findings of this section can be summarized in the following diagram:\\

	\tikzstyle{startstop} = [rectangle, minimum width=2cm, minimum height=1cm, text centered, draw=black]
	\tikzstyle{point} = [coordinate,on grid,]
	\tikzstyle{arrow} = [thick,->,>=stealth]
	%\tikzstyle{arrow} = [thick,<-,>=stealth]
	\begin{tikzpicture}[node distance=3cm]
	\node (UB)[startstop]	{Universal boundedness estimate};
	\node (CFH)[startstop,right of=UB, xshift=6cm]  {Curvature-free Harnack};
	\node (LG) [startstop,below of=UB] {Logarithmic gradient estimate};
	\node (CDH) [startstop, right of=LG, xshift=6cm] {Curvature-dependent Harnack};
	\node [point, right of=CDH, node distance=0.2cm] (point1){};
	\node [point, above of=point1, node distance=0.5cm] (point2){};
	\node [point, right of=CFH, node distance=0.2cm] (point3){};	
	\node [point, below of=point3, node distance=0.5cm] (point4){};	
	\draw [arrow](UB)--node[left]{Bernstein}(LG);
	\draw [arrow](LG)--node[above]{Continuity}(CDH);
	\draw [arrow](CFH)--node[above]{Bernstein}(UB);
	\draw [arrow](CFH)--(CDH);
	\draw [arrow](point2)--node[right]{$K=0$}(point4);	
	\end{tikzpicture}
\end{rmk}

\section{Proof of Theorem \ref{main2} and  Theorem \ref{S}}

In this section, we will prove Theorem \ref{main2} and Theorem \ref{S} using the first kind auxiliary function for $N\ge 4$ and  second auxiliary function for $N\in[1,4)$ respectively. When $N\in[1,4)$, by proving the universal boundedness estimate, we can directly obtain the logarithmic gradient estimate through Lemma \ref{l31}. The key to our proof lies in combining truncation method and Bernstein-Yau method. In addition, the role of $\e$ in the constructions of $F,G$ will become clear. 

For unified description, throughout this section, we assume the nonlinear term $f$ satisfies:

\noindent{\rm (1)} $f(t)>0$ on $(0,\infty)$,\\
{\rm (2)} $\lambda\ge 1$,\\
{\rm (3)} $\Lambda<p_S(n)$,\\
{\rm (4)} $\frac{tf'}{f}$ is non-decreasing on $(0,\infty)$.

As the proof of Lemma \ref{l61}, we notice that the condition $(4)$ is equivalent to 
\begin{equation}\label{7k}
\frac{t^2f''}{f}\ge \frac{tf'}{f}\left(\frac{tf'}{f}-1\right).
\end{equation}
Hence above (1), (3), (4) are stronger than the conditions stated in Theorem \ref{main}. Without loss of generality, we assume $\Lambda\ge p(N)$ throughout this section.

The following lemma provides a basic estimate for the coefficients in Lemma \ref{k1}, which plays a crucial role in proving Theorem \ref{main2}.

\begin{lem}\label{ab}
	Let $N\ge 4$, $\e>0, \g=1$, and $U, V, W$ be defined as in Lemma \ref{k1}. Then there exist constants $\b=\b(N,\Lambda)\in(0,\frac{2}{N-2})$, $d=d(N,\Lambda)>0$ and $L=L(N,\Lambda)>0$ such that for any $\e>0$,
	\begin{equation}
	U\ge U_0>0,\nonumber
	\end{equation}
	\begin{equation}
	V\ge V_0-L\chi_{\{u< L\epsilon\}},\nonumber
	\end{equation}
	\begin{equation}
	W\ge W_0-L\chi_{\{u< L\epsilon\}},\nonumber
	\end{equation}
	where $U_0,V_0,W_0$ are positive constants which only depend on $N, \Lambda$.	
\end{lem}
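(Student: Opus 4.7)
My plan is to view $U$, $V$, $W$ as functions of the auxiliary variables $s := u/(u+\varepsilon) \in [0,1)$ and $r := uf'/f \in [\lambda, \Lambda]$, after replacing $u^2 f''/f$ by its pointwise lower bound $r(r-1)$ (which follows from $(tf'/f)' \geq 0$ rearranging to $\tfrac{t^2 f''}{f} \geq \tfrac{tf'}{f}\bigl(\tfrac{tf'}{f} - 1\bigr)$). With $\gamma = 1$, each of $U, V, W$ then becomes a polynomial in $s$ of degree at most two with coefficients depending on $(N, \beta, d, r)$. For $U$ one has
\[
U = \tfrac{2}{N}\bigl(1+\tfrac{1}{\beta}\bigr)^2 - 2 + \bigl(\tfrac{1}{\beta}-1\bigr)(s^2 - 2s),
\]
so for any $\beta \in (0, \tfrac{2}{N-2}]$ (noting $\tfrac{2}{N-2} \leq 1$ since $N \geq 4$) the correction is non-positive on $[0,1)$ and minimized at $s = 1$. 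The minimum factors as $\bigl(1+\tfrac{1}{\beta}\bigr)\bigl[\tfrac{2}{N}\bigl(1+\tfrac{1}{\beta}\bigr) - 1\bigr]$, strictly positive precisely when $\beta < \tfrac{2}{N-2}$; this gives $U \geq U_0 > 0$ uniformly.

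For $V$ and $W$, the values at $s = 1$ coincide, after the substitution $q \geq r(r-1)$, with the coefficients whose positivity was established in the proof of Lemma \ref{l61}. I will therefore reuse the case split there (Case 2 when $\Lambda < \tfrac{N+1}{N-2}$, Case 3 when $\Lambda \in [\tfrac{N+1}{N-2}, p_S(N))$) to produce a compatible choice of $\beta \in (0, \tfrac{2}{N-2})$ and $d > 0$, depending only on $(N, \Lambda)$, with $V|_{s=1} \geq 2V_0 > 0$ and $W|_{s=1} \geq 2W_0 > 0$ uniformly in $r \in [\lambda, \Lambda]$.

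A direct expansion yields the clean continuity formulas
\[
V(s) - V(1) = -\beta(1-s) + \tfrac{d(1-\beta)}{\beta}(1-s)^2, \qquad W(s) - W(1) = -d\beta(1-s),
\]
both independent of $r$. Dropping the non-negative quadratic term gives $V(s) \geq V(1) - \beta(1-s)$ and $W(s) \geq W(1) - d\beta(1-s)$. On $\{u \geq L\varepsilon\}$ one has $1-s \leq 1/(L+1)$, so choosing $L$ large (depending only on $N, \Lambda$) forces $V \geq V_0$ and $W \geq W_0$ there. On the complementary set $\{u < L\varepsilon\}$, $s$ is confined to $[0, L/(L+1)]$ and $r$ to $[\lambda, \Lambda]$, so $V$ and $W$ are bounded below by a universal constant $-M = -M(N, \Lambda)$; enlarging $L$ once more so that $L \geq V_0 + M$ and $L \geq W_0 + M$ delivers the stated inequalities. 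The main obstacle is the bookkeeping, since the single letter $L$ must simultaneously define the threshold, absorb the continuity loss in $s$, and dominate the worst-case shortfall on $\{u < L\varepsilon\}$; all three constraints are quantitative in $(N, \Lambda)$ and hence reconcilable by taking $L$ sufficiently large.
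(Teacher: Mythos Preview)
Your proposal is correct and follows essentially the same route as the paper. Both arguments reduce to the $s=1$ case, where $V|_{s=1}$ and $W|_{s=1}$ coincide with the $V(f), W(f)$ whose positivity is obtained via the quadratic $Q(x,\b,d)$, and then control the perturbation through the identities $V-V(f)=d(\tfrac{1}{\b}-1)(1-s)^2-\b(1-s)$ and $W-W(f)=-d\b(1-s)$; the paper simply states the last step more tersely, while you spell out the $L$--bookkeeping, and you delegate the choice of $(\b,d)$ to Lemma~\ref{l61} rather than redoing the $Q$-analysis inline.
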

\begin{proof}
	First,
	if $\g=1$, then	
	\begin{eqnarray}
	U&=&\left(\frac{1}{\b}-1\right)\left(\frac{u}{u+\e}-2\right)\frac{u}{u+\e}+\frac{2}{N}\left(1+\frac{1}{\b}\right)^2-2\nonumber\\
	&\ge&\left(\frac{2}{N}\left(1+\frac{1}{\b}\right)-1\right)\left(1+\frac{1}{\b}\right)\nonumber\\
	&>&0,
	\end{eqnarray}
	when $\frac{1}{\b}-1\ge0$ and $\b<\frac{2}{N-2}$, which is equivalent to $\b<\frac{2}{N-2}$ by $N\ge 4$. We define
	\begin{equation}
	U_0=\left(\frac{2}{N}\left(1+\frac{1}{\b}\right)-1\right)\left(1+\frac{1}{\b}\right).\nonumber
	\end{equation}
	At this moment,  
	\begin{eqnarray}
	V&\ge&\frac{4}{N}(1+\b)+2\left(1-\frac{uf'}{f}\right)+d\left(\frac{1}{\b^2}\frac{uf'}{f}-\frac{2}{\b}\right)\left(\frac{uf'}{f}-1\right)\nonumber\\
	&&+\frac{u}{u+\e}\left(\b+d\left(\left(\frac{1}{\b}-1\right)\frac{u}{u+\e}+2-\frac{2}{\b}\right)\right)\nonumber\\
	&=&V(f)+d\left(\frac{1}{\b}-1\right)\frac{\e^2}{(u+\e)^2}-\frac{\b\e}{u+\e},\nonumber\\
	W&=&W(f)-\frac{d\b\e}{u+\e},\nonumber
	\end{eqnarray}
	where
	\begin{eqnarray}
	V(f)&=&\frac{4}{N}(1+\b)+2\left(1-\frac{uf'}{f}\right)+\b+d\left(1+\frac{1}{\b}\left(1-\frac{uf'}{f}\right)\right)\left(1-\frac{1}{\b}\frac{uf'}{f}\right),\nonumber\\
	W(f)&=&\frac{2}{N}\b^2+d\left(\b+1-\frac{uf'}{f}\right).\nonumber
	\end{eqnarray}
	We choose $d_0=\frac{2\b^2}{N(\Lambda-1-\b)}>0$ ($\Lambda\ge\frac{N+3}{N-1}>\frac{N}{N-2}$ when $N\ge 4$), where $\b\in(0,\frac{2}{N-2})$ is undetermined. Define 
	\begin{equation}
	Q(x,\b,d)=\frac{4}{N}(1+\b)+2\left(1-x\right)+\b+d\left(1+\frac{1-x}{\b}\right)\left(1-\frac{x}{\b}\right),
	\end{equation}
	where $d\in(0,d_0]$ and $\b\in(0,\frac{2}{N-2})$ is arbitrary, and notice that 
	\begin{eqnarray}
	\text{the symmetry axis of}\,\,\, Q&=&\frac{1}{2}+\b+\frac{\b^2}{d}\nonumber\\
	&\ge&\b+\frac{1}{2}+\frac{N}{2}(\Lambda-1-\b)\ge\Lambda,
	\end{eqnarray}
	when $\b<\Lambda-\frac{N-1}{N-2}$. Hence, $V(f)\ge Q(\Lambda,\b,d)$. If we choose $d=d_0$ first, then for any $\b\in\left(2\left(\frac{N-1}{N+2}\Lambda-1\right),\Lambda-\frac{N-1}{N-2}\right)$, we have $Q(\Lambda,\b,d_0)>0$. Then we can choose a fixed $\b\in\left(2\left(\frac{N-1}{N+2}\Lambda-1\right),\frac{2}{N-2}\right)$, for this fixed $\b$, by continuity of $Q$, we can fix a $d=d(N,\Lambda)\in(0,d_0)$ such that $Q(\Lambda,\b,d)>0$. It is obvious that for these fixed $\b,d$, $U_0,V(f),W(f)$ both have positive lower bound and the bound only depend on $N,\Lambda$. Therefore, there exist $L=L(N,\Lambda)>0$, $V_0=V_0(N,\Lambda)>0$ and $W_0=W_0(N,\Lambda)>0$ such that
	\begin{eqnarray}
	V&\ge&V_0-L\chi_{\{u< L\epsilon\}},\nonumber\\
	W&\ge&W_0-L\chi_{\{u< L\epsilon\}}.\nonumber
	\end{eqnarray}
	Then the proof is complete.	
	
\end{proof}
\begin{rmk}\label{7r1}
	\rm	Actually, we can choose any $\b\in\left(2\left(\frac{N-1}{N+2}\Lambda-1\right),\frac{2}{N-2}\right)$ and $d\in\left(0,\frac{2\b^2}{N(\Lambda-1-\b)}\right)$ in the proof of Lemma \ref{ab}.
\end{rmk}
With the help of Lemma \ref{k1} and Lemma \ref{ab}, we can give a proof of Theorem \ref{main2} for $N\ge 4$.

\begin{proof}[Proof of Theorem~\ref{main2} in the case $N \ge 4$]
	
	By Lemma \ref{k1} and Lemma \ref{ab}, for $\g=1$ in the construction of $F$, we can choose positive numbers $\b=\b(N,\Lambda)$, $d=d(N,\Lambda)$, $L=L(N,\Lambda)$ and $C=C(N,\Lambda)$ such that for any $\e>0$,
	\begin{equation}\label{70}
	\d F\ge-2KF+2\left(\frac{1}{\b}-1+\frac{u}{u+\e}\right)\left\langle\nabla F,\nabla\ln w\right\rangle+C(u+\e)^{\b}F^2-L\frac{f}{u}\chi_{\{u< L\epsilon\}}F
	\end{equation}
on $B(x_0,\frac{3}{2}R)$.	Notice that $t^{-1}f(t)$ is non-decreasing on $(0,\infty)$ (by $\lambda\ge1$), hence \eqref{70} yields
	\begin{equation}\label{71}
	\d F\ge-2KF+2\left(\frac{1}{\b}-1+\frac{u}{u+\e}\right)\left\langle\nabla F,\nabla\ln w\right\rangle+C(u+\e)^{\b}F^2-\frac{f(L\e)}{\e}F
	\end{equation}
	on $B(x_0,\frac{3}{2}R)$.
	
	As before, we can obtain the boundedness of $F$ via \eqref{71} as follows.
	
	Let $A:=\Phi F$ be the auxiliary function, where $\Phi$ is an undetermined cut-off function as Lemma \ref{cut off}
	by choosing $\alpha=\frac{M_1}{2M_2}$, where $M_1=\sup\limits_{B(x_0,R)} A>0$ (or proof is finished) and $M_2=\sup\limits_{B(x_0,\frac{3}{2}R)} A$. By Radon--Nikodym decomposition, we get 
	\begin{equation}\label{7s}
	\Delta^{s}A=\Delta^s\Phi \cdot F+\Phi\cdot \Delta^s F\ge 0
	\end{equation}
	on $B(x_0,\frac{3R}{2})$ because the singular part of $\d \Phi$ and $\d F$ are non-negative by Lemma \ref{cut off} and \eqref{71}.
	By chain rule and $\Phi\ge\alpha$, we  have	
	\begin{eqnarray}\label{72}
	\d^{ac}A=\Delta^{ac} \Phi\cdot F+\d^{ac}F\cdot \Phi+2\left\langle\frac{\nabla \Phi}{\Phi} ,\nabla A \right\rangle-2A\cdot\frac{|\nabla \Phi|^2}{\Phi^2}.
	\end{eqnarray}
	Substituting \eqref{71} into \eqref{72}, we get
	\begin{eqnarray}\label{73}
	\d^{ac}A&\ge& -2KA+2\left(\frac{1}{\b}-1+\frac{u}{u+\e}\right)\left\langle\nabla A,\nabla \ln w\right\rangle-2\left(\frac{1}{\b}-1+\frac{u}{u+\e}\right)F\left\langle\nabla \Phi,\nabla \ln w\right\rangle\nonumber\\
	&&+C(u+\e)^{\b}\Phi F^2+\Delta^{ac} \Phi\cdot F+2\left\langle\frac{\nabla \Phi}{\Phi} ,\nabla A \right\rangle-2A\cdot\frac{|\nabla \Phi|^2}{\Phi^2}-\frac{f(L\e)}{\e}A.
	\end{eqnarray}	
	Cauchy-Schwarz inequality and basic inequality again give	
	\begin{eqnarray}\label{7cs}
-2\left(\frac{1}{\b}-1+\frac{u}{u+\e}\right)F\left\langle\nabla \Phi,\nabla \ln w\right\rangle	&\ge&-\frac{C}{2}\Phi F\frac{|\nabla w|^2}{w^2}-\frac{2}{\b^2 C}F\frac{|\nabla \Phi|^2}{\Phi}\nonumber\\
	&\ge&-\frac{C}{2}(u+\e)^{\b}\Phi F^2-\frac{2}{\b^2 C}A\frac{|\nabla \Phi|^2}{\Phi^2},
	\end{eqnarray}
	and for any $\delta>0$,
	\begin{eqnarray}\label{7cs2}
	2\left(\frac{1}{\b}-1+\frac{u}{u+\e}\right)\left\langle\nabla A,\nabla \ln w\right\rangle&\ge&	-\frac{1}{\b^2\delta}(u+\e)^{\b}|\nabla A|^2-\delta(u+\e)^{-\b}\frac{|\nabla w|^2}{w^2}\nonumber\\
	&\ge&-P(u,\b,\e,\delta)|\nabla A|^2-\delta F,
	\end{eqnarray}
	where $$P(u,\b,\e,\delta)=\frac{1}{\b^2\delta}\left(\sup\limits_{B(x_0,2R)}u+\e\right)^{\b}.$$ Combining \eqref{73}, \eqref{7cs} and \eqref{7cs2}, we get
	\begin{eqnarray}
	\d^{ac}A+\left\langle\nabla A,\nabla B\right\rangle
	&\ge& \frac{C}{2}(u+\e)^{\b}\Phi F^2-\left(2K+\frac{f(L\e)}{\e}\right)A\nonumber\\
	&&+(\Delta^{ac} \Phi-\delta) F-2\left(1+\frac{1}{\b^2 C}\right)A\frac{|\nabla \Phi|^2}{\Phi^2},
	\end{eqnarray}
	where $B=-2\ln \Phi+P(u,\b,\e,\delta)A$.

Notice that $A$ achieves its strict maximum in $B(x_0,\frac{5}{4}R)$ in the sense of Lemma \ref{MMP}. Therefore, by Lemma \ref{MMP}, we have a sequence $\{x_j\}_{j=j_0}^{\infty}\subset B(x_0,\frac{3}{2}R)$ such that
\begin{equation}
A_j=A(x_j)\ge \sup\limits_{B(x_0,\frac{3}{2}R)}A-\frac{1}{j}>0
\end{equation}
and 
\begin{eqnarray}\label{733}
\frac{1}{j}\ge\frac{C}{2}\e^{\b}\Phi F^2_j+(\Delta^{ac} \Phi-\delta) F_j-\left(2K+\frac{f(L\e)}{\e}\right) A_j-2\left(1+\frac{1}{\b^2 C}\right)\frac{|\nabla \Phi|^2}{\Phi^2}A_j,
\end{eqnarray}
where $F_j:=F(x_j)$. Multiplying $\Phi(x_j)$ on both sides of \eqref{733} gives
\begin{eqnarray}\label{734}
\frac{1}{j}\ge\frac{C}{2}\e^{\b} A^2_j+ (\Delta^{ac} \Phi-\delta) A_j-\left(2K+\frac{f(L\e)}{\e}\right)\Phi A_j-2\left(1+\frac{1}{\b^2 C}\right)\frac{|\nabla \Phi|^2}{\Phi}A_j.
\end{eqnarray}	
  Using property of $\Phi$ and letting $j\to \infty$ first and then letting $\delta\to 0^{+}$, we see	
\begin{equation}
\sup\limits_{B(x_0,\frac{3}{2}R)}A\le C(N,\Lambda)\e^{-\b}\left(K+\frac{1}{R^2}+\frac{f(L\e)}{\e}\right),
\end{equation}
which yields
\begin{equation}\label{736}
\sup\limits_{B(x_0,R)}(u+\e)^{-\b}\left(\frac{|\nabla u|^2}{u^2}+\frac{f(u)}{u}\right)\le C(N,\Lambda)\e^{-\b}\left(K+\frac{1}{R^2}+\frac{f(L\e)}{\e}\right).
\end{equation}
Then we completed the proof.
\end{proof}

Via Theorem \ref{main2} ($N\ge4$ case) and Definition \ref{d0}, we immediately get a proof of Theorem \ref{S} in the case $N\ge4$.
\begin{proof}[Proof of Theorem \ref{S} in the case $N\ge 4$]
By Remark \ref{7r1}, we can choose 
\begin{equation}
	\b_0=\frac{1}{2}\left(\rho(N,\Lambda)+\min\left(\b,\frac{2}{N-2}\right)\right),
\end{equation}
and inequality \eqref{736} holds for such $\b_0$, where $\rho(N,\Lambda)$ is defined as in Theorem \ref{S}. Hence Theorem \ref{main2} ($N\ge 4$) can be read as
\begin{equation}\label{7360}
	\sup\limits_{B(x_0,R)}(u+\e)^{-\b_0}\left(\frac{|\nabla u|^2}{u^2}+\frac{f(u)}{u}\right)\le C(N,\Lambda)\e^{-\b_0}\left(K+\frac{1}{R^2}+\frac{f(L\e)}{\e}\right).
\end{equation}

If $\lim\limits_{t\to 0^+}t^{-1}f(t)=0$ and $t^{-1-\b_0}f(t)$ is non-decreasing on $(0,\infty)$ (with $\b_0\in(\rho(N,\Lambda),\frac{2}{N-2})$)\footnote{Notice that $t^{-1-\b}f(t)$ is $h$-inverse bounded implies $t^{-1-\b_0}f(t)$ is $h$-inverse bounded (hence is non-decreasing) because $f(t)>0$ and $\b\ge\b_0$.}, then we can choose $\e>0$ such that
\begin{equation}\label{ed}
	\frac{f(L\e)}{L\e}=K+\frac{1}{R^2}.
\end{equation}

 For $\mu$-a.e. $y\in B(x_0,R)$, if $u(y)\le\e$, \eqref{7360} gives
\begin{equation}\label{737}
	\left(\frac{|\nabla u|^2}{u^2}+\frac{f(u)}{u}\right)(y)\le C(N,\Lambda)\left(K+\frac{1}{R^2}\right),
\end{equation}
if $u(y)>\e$, then \eqref{7360} gives
\begin{equation}
	\frac{f(u)}{u^{1+\b_0}}(y)\le C(N,\Lambda)\frac{f(L\e)}{(L\e)^{1+\b_0}},
\end{equation}
which yields that $u(y)\le C(N,\Lambda)\e$ because $t^{-1-\b_0}f(t)$ is $h$-inverse bounded. By \eqref{7360} again, we also get 
\begin{equation}\label{738}
\left(\frac{|\nabla u|^2}{u^2}+\frac{f(u)}{u}\right)(y)\le C(N,\Lambda,\b)\left(K+\frac{1}{R^2}\right),
\end{equation}
Combining \eqref{737} and \eqref{738}, we completed the proof.
	
\end{proof}

In order to ensure that the coefficients on the right-hand side of \eqref{k22} are positive, we use the second kind auxiliary function $G$. The advantage of this approach is that the coefficient of $w^{-4}|\nabla w|^4$ does not depend on $\epsilon$. However, a clear disadvantage is that we require $\lambda>2$.
\begin{lem}\label{ac}
	Let $N\in[1,4)$, $\e>0$, $\g=1$, and $X$, $Y$, $Z$ be defined as in Lemma \ref{k2}. We also assume $\lambda>2$, then there exist constants $\b=\b(N,\Lambda)\in(0,\infty)$ if $N\in[1,2]$ or  $\b=\b(N,\Lambda)\in(0,\frac{2}{N-2})$ if $N\in(2,4)$, $d=d(N,\Lambda)>0$ and $L=L(N,\l,\Lambda)>0$ such that for any $\e>0$ 
	\begin{equation}
	X\ge X_0>0,\nonumber
	\end{equation}
	\begin{equation}
	Y\ge Y_0-L\chi_{\{u< L\epsilon\}},\nonumber
	\end{equation}
	\begin{equation}
	Z\ge Z_0-L\chi_{\{u< L\epsilon\}},\nonumber
	\end{equation}
	where $X_0,Y_0,Z_0$ are positive constants which only depend on $N, \Lambda$.	
	
\end{lem}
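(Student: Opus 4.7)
The plan is to substitute $\g=1$ in the formulas of Lemma \ref{k2}, fix $\b$ first to make $X$ strictly positive, and then choose $d$ so small that the main parts of $Y$ and $Z$ are bounded below by positive constants, with the $\e$-singular corrections absorbed into the truncation term $-L\chi_{\{u<L\e\}}$. The scheme parallels Lemma \ref{ab}; the advantage of switching to the second kind auxiliary function $G$ is that the leading coefficient $X$ is now $\e$-free, while the cost is the extra hypothesis $\l>2$ needed for $Y$.

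At $\g=1$ a direct computation gives
\[
X=\tfrac{2}{N}\Bigl(1+\tfrac{1}{\b}\Bigr)^{2}-1-\tfrac{1}{\b},
\]
which is strictly positive iff $\tfrac{1}{\b}>\tfrac{N-2}{2}$. For $N\in[1,2]$ this holds for every $\b>0$, whereas for $N\in(2,4)$ it forces $\b<\tfrac{2}{N-2}$; in either case such a $\b$ is available and I would fix it. For $Z$, the hypothesis $\l\ge 1$ gives $\tfrac{uf'}{f}\in[\l,\Lambda]\subset[1,\Lambda]$, so $d(1-\tfrac{uf'}{f})\ge d(1-\Lambda)$; on $\{u\ge L\e\}$ the ratio $\tfrac{u}{u+\e}$ is at least $\tfrac{L}{L+1}$, and $Z$ approaches $\tfrac{2\b^2}{N}+d\b+d(1-\Lambda)$ as $L\to\infty$, which is positive for $d$ small in terms of $N,\b,\Lambda$. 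A crude bound on $\{u<L\e\}$ produces the error term $-L\chi_{\{u<L\e\}}$.

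The heart of the argument is the analysis of $Y$. Writing $x=\tfrac{uf'}{f}\in[\l,\Lambda]$ and $\eta=\tfrac{u+\e}{u}\ge 1$, I would apply \eqref{7k} to replace $\tfrac{u^2 f''}{f}$ by $x(x-1)$ and rearrange using the identity $x(x-1)+2-2x=(x-1)(x-2)$ to obtain
\[
Y\ge\Bigl(\tfrac{4}{N}(1+\b)+2+\b\Bigr)\tfrac{1}{\eta}-2x+d\Bigl(1-\tfrac{1}{\b}\Bigr)+\tfrac{d\eta^2}{\b^2}(x-1)(x-2)+\tfrac{2d\eta(1-\b)}{\b^2}(x-1).
\]
The role of $\l>2$ is now transparent: the quadratic $(x-1)(x-2)\ge(\l-1)(\l-2)>0$ on $[\l,\Lambda]$, so the dominant $\e$-sensitive term has a definite positive sign. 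On $\{u\ge L\e\}$, $\eta\in[1,1+\tfrac{1}{L}]$ is near $1$; choosing $L$ large and $d$ small (both depending on $N,\l,\Lambda$) gives $Y\ge Y_0>0$. On $\{u<L\e\}$ every factor of $\eta$ is bounded by $L+1$, so $Y$ is bounded below by a constant depending only on $N,\l,\Lambda$, absorbed into $-L\chi_{\{u<L\e\}}$.

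The main obstacle is precisely this $Y$-analysis, which mixes $\e$-singular factors $\eta^{k}$ with a sign-indefinite polynomial in $x$. Without \eqref{7k} there is no useful lower bound for $\tfrac{u^2 f''}{f}$, and without $\l>2$ the dominant quadratic could change sign inside $[\l,\Lambda]$; together they convert $Y$ into a manifestly nonnegative leading expression plus a controlled linear remainder. The simultaneous requirement $X_0>0$ is what restricts the argument to $N<4$, in line with the statement.
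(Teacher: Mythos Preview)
Your plan matches the paper's approach for the easier parameter ranges, but the key step ``choose $d$ small'' fails in the regime that actually matters, namely $N\in(2,4)$ with $\Lambda\ge\frac{N+1}{N-2}$. At $\eta=1$ your displayed lower bound for $Y$ reduces, when $d\to 0$, to
\[
\frac{4}{N}(1+\b)+2+\b-2x,
\]
which at $x=\Lambda$ is positive only when $\b>\frac{2N}{N+4}\bigl(\Lambda-1-\tfrac{2}{N}\bigr)$. A short computation shows this threshold exceeds $\frac{2}{N-2}$ precisely when $\Lambda\ge\frac{N+1}{N-2}$; since $\frac{N+1}{N-2}<p_S(N)=\frac{N+2}{N-2}$, this range is nonempty and allowed by the hypotheses of Section~7. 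In that range no admissible $\b$ makes the $d\to 0$ limit of $Y$ positive, so your choice of ``$d$ small'' cannot yield $Y\ge Y_0>0$. The paper resolves this (its Case~3, mirroring Lemma~\ref{ab}) by treating the right-hand side as a quadratic $Q(x,\b,d)$ in $x$, taking $d$ near the specific value $d_0=\frac{2\b^2}{N(\Lambda-1-\b)}$ so that the axis of symmetry of $Q$ exceeds $\Lambda$, and then choosing $\b\in\bigl(2(\tfrac{N-1}{N+2}\Lambda-1),\tfrac{2}{N-2}\bigr)$; only with this coupled choice do $X_0,Y_0,Z_0$ all come out positive. You need this quadratic analysis, not a perturbative one.

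A smaller slip: on $\{u<L\e\}$ the ratio $\eta=\frac{u+\e}{u}$ is \emph{not} bounded by $L+1$; it is unbounded as $u\to 0$. The conclusion you want there is still true, but for the opposite reason: the positive $\frac{d\eta^2}{\b^2}(x-1)(x-2)$ term (this is where $\l>2$ is genuinely used) dominates the $O(\eta)$ negative terms as $\eta\to\infty$, so the whole expression is bounded below by a constant depending only on $N,\l,\Lambda$. The paper makes this explicit by writing the perturbation as $\frac{d}{\b^2}\frac{2u\e+\e^2}{u^2}(\l-1)(\l-2)-\frac{2d\Lambda}{\b}\frac{\e}{u}$ plus a bounded piece.
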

\begin{proof}
	As before, if $\g=1$, we have
	\begin{equation}
	X_0:=X=\left(\frac{2}{N}\left(1+\frac{1}{\b}\right)-1\right)\left(1+\frac{1}{\b}\right)>0,\nonumber
	\end{equation}
	if $N\le 2$ and $\b>0$ or $N\in(2,4)$ and $\b\in(0,\frac{2}{N-2})$.
	By \eqref{7k}, we can see 
	\begin{eqnarray}
	Y&\ge&\left(\frac{4}{N}(1+\b)+2+\b\right)\frac{u}{u+\e}-2\frac{uf'}{f}+\frac{2d}{\b}\left(\frac{1}{\b}\frac{u+\e}{u}\left(\frac{uf'}{f}-1\right)\right)\nonumber\\
	&&+d\left(\frac{1}{\b^2}\frac{(u+\e)^2}{u^2}\left(\frac{uf'}{f}-1\right)\left(\frac{uf'}{f}-2\right)+1+\frac{1}{\b}-\frac{2}{\b}\frac{u+\e}{u}\left(\frac{uf'}{f}-1\right)\right)\nonumber\\
	&=&Y(f)-\left(\b+2+\frac{4}{N}\left(1+\b\right)\right)\frac{\e}{u+\e}\nonumber\\
	&&+d\left(\frac{1}{\b^2}\frac{2u\e+\e^2}{u^2}\left(\frac{uf'}{f}-1\right)\left(\frac{uf'}{f}-2\right)+\frac{2}{\b}\left(\frac{1}{\b}-1\right)\left(\frac{uf'}{f}-1\right)\frac{\e}{u}\right)\nonumber\\
	&\ge&Y(f)-\left(\b+2+\frac{4}{N}\left(1+\b\right)\right)\frac{\e}{u+\e}+d\left(\frac{1}{\b^2}\frac{2u\e+\e^2}{u^2}\left(\lambda-1\right)\left(\lambda-2\right)-\frac{2\Lambda}{\b}\frac{\e}{u}\right),\nonumber\\
	Z&=&Z(f)-\frac{d\b\e}{u+\e}-\frac{2\b^2}{N}\frac{\e^2+2u\e}{(u+\e)^2},\nonumber
	\end{eqnarray}
	where
	\begin{eqnarray}
	Y(f)&=&\frac{4}{N}(1+\b)+2\left(1-\frac{uf'}{f}\right)+\b+d\left(1+\frac{1}{\b}\left(1-\frac{uf'}{f}\right)\right)\left(1-\frac{1}{\b}\frac{uf'}{f}\right),\nonumber\\
	Z(f)&=&\frac{2}{N}\b^2+d\left(\b+1-\frac{uf'}{f}\right).\nonumber
	\end{eqnarray}
	Now, we divide the following argument into three cases.
	
	Case 1: $N\in[1,2]$. We choose $\b\in\left(\left(\frac{1}{2}+\frac{2}{N}\right)^{-1}\left(\Lambda-1-\frac{2}{N}\right),\infty\right)$ and  $d=d(N,\Lambda)>0$ such that $Y(f)\ge Y_0>0$ and $Z(f)\ge Z_0>0$, where $Y_0, Z_0$ only depend on $N,\Lambda$. Then  there exists $L=L(N,\lambda, \Lambda)>0$ such that
	\begin{eqnarray}
	Y&\ge&\frac{Y_0}{2}-L\chi_{\{u< L\epsilon\}},\nonumber\\
	Z&\ge&\frac{Z_0}{2}-L\chi_{\{u< L\epsilon\}}.\nonumber
	\end{eqnarray}
	Here, we use the condition $\lambda>2$ to ensure  the coefficient of $\frac{\e^2}{u^2}$ is positive and so there exists $L=L(N,\lambda, \Lambda)>0$ such  that
	\begin{equation}
	-\left(\b+2+\frac{4}{n}\left(1+\b\right)\right)\frac{\e}{u+\e}+d\left(\frac{1}{\b^2}\frac{2u\e+\e^2}{u^2}\left(\lambda-1\right)\left(\lambda-2\right)-\frac{2\Lambda}{\b}\frac{\e}{u}\right)\ge -L
	\end{equation}
	and 
	\begin{equation}
	-\left(\b+2+\frac{4}{n}\left(1+\b\right)\right)\frac{\e}{u+\e}+d\left(\frac{1}{\b^2}\frac{2u\e+\e^2}{u^2}\left(\lambda-1\right)\left(\lambda-2\right)-\frac{2\Lambda}{\b}\frac{\e}{u}\right)\ge -\frac{Y_0}{2}
	\end{equation}
	on the set $\{x:u(x)\ge L\e\}$.

	Case 2:	$N\in(2,3)$ and $\Lambda<\frac{N+1}{N-2}$. As in Case 1, we choose $\b\in\left(\left(\frac{1}{2}+\frac{2}{N}\right)^{-1}\left(\Lambda-1-\frac{2}{N}\right),\frac{2}{N-2}\right)$ and suitable $d=d(N,\Lambda)>0$ such that $Y(f)\ge Y_0>0$ and $Z(f)\ge Z_0>0$, where $Y_0, Z_0$ only depend on $N,\Lambda$. The remaining part is same as Case 1.

	Case 3: $N\in(2,3)$ and $\Lambda\in[\frac{N+1}{N-2},p_S(N))$ or $N\in[3,4)$ (notice $\Lambda\ge p(N)$). As proof of Lemma \ref{ab}, we can choose $d_0=\frac{2\b^2}{N(\Lambda-1-\b)}$, where $\b\in(0,\frac{2}{N-2})$ is undetermined. Define
	$Q(x,\b,d)$ as proof of Lemma \ref{ab},
	where $d\in(0,d_0]$ and $\b\in(0,\min(\frac{2}{N-2},\Lambda-\frac{N-1}{N-2}))$ is arbitrary, and we know
	\begin{equation}
	\text{the symmetry axis of}\,\,\, Q(\cdot,\b,d)=\frac{1}{2}+\b+\frac{\b^2}{d}\ge \Lambda.
	\end{equation}
	 Hence, $Y(f)\ge Q(\Lambda,\b,d)$. If we choose $d=d_0$, then there exists $\b=\b(N,\Lambda)\in(0,\frac{2}{N-2})$ (any $\b\in\left(2\left(\frac{N-1}{N+2}\Lambda-1\right),\frac{2}{N-2}\right)$ is satisfied) such that $Q(\Lambda,\b,d_0)>0$. For this fixed $\b$, by continuity of $Q$, we can fix a $d=d(N,\Lambda)\in(0,d_0)$ such that $Q(\Lambda,\b,d)>0$. So, for these fixed $\b,d$, $Y(f)\ge Y_0>0$, $Z(f)\ge Z_0>0$,  where $Y_0, Z_0$ only depend on $N,\Lambda$. Then, by same argument as Case 1, we finish the proof.
	
\end{proof}
\begin{rmk}\label{7r2}
\rm	Actually, when $N\in(2,4)$, any $\b\in\left(2\left(\frac{N-1}{N+2}\Lambda-1\right),\frac{2}{N-2}\right)$ is satisfied if $\Lambda\ge\frac{N+1}{N-2}$ or any $\b\in\left(2\left(\frac{N-1}{N+2}\Lambda-1\right),\min(\frac{2}{N-2},\Lambda-\frac{N-1}{N-2})\right)$ (not empty) is satisfied if $N\in[3,4)$.
\end{rmk}

Now, we can prove the Theorem \ref{main2} in  the case $N\in[1,4)$.

\begin{proof}[Proof of Theorem \ref{main2} in the case $N\in[1,4)$]
	
	By Lemma \ref{k2} and Lemma \ref{ac}, for $\g=1$ in the construction of $G$, we can choose positive numbers $\b=\b(N,\Lambda)$, $d=d(N,\Lambda)$, $L=L(N,\l,\Lambda)$ and $C=C(N,\Lambda)$ such that for any $\e>0$,
	\begin{equation}\label{711}
	\d G\ge-2KG+\frac{2}{\b}\left\langle\nabla G,\nabla\ln w\right\rangle+Cw^{-1}G^2-\frac{f(L\e)}{\e}G
	\end{equation}
	on $B(x_0,\frac{3}{2}R)$, because $t^{-1}f(t)$ is non-decreasing on $(0,\infty)$ (by $\lambda>1$).
	
	As before, we can obtain the boundedness of $G$ via \eqref{711} as follows.
	
	Let $A:=\Phi G$ be the auxiliary function, where $\Phi$ is an undetermined cut-off function as Lemma \ref{cut off}
	by choosing $\alpha=\frac{M_1}{2M_2}$, where $M_1=\sup\limits_{B(x_0,R)} A>0$ (or proof is finished) and $M_2=\sup\limits_{B(x_0,\frac{3}{2}R)} A$.   First, as proof of Theorem \ref{main2} in the case $N\ge 4$, we get $\Delta^{s}A\ge 0$
	on $B(x_0,\frac{3R}{2})$.
	By chain rule and $\Phi\ge\alpha$, as before, we  have	
	\begin{eqnarray}\label{721}
	\d^{ac}A=\Delta^{ac} \Phi\cdot G+\d^{ac}G\cdot \Phi+2\left\langle\frac{\nabla \Phi}{\Phi} ,\nabla A \right\rangle-2A\cdot\frac{|\nabla \Phi|^2}{\Phi^2}.
	\end{eqnarray}
	Substituting \eqref{711} into \eqref{721}, we get
	\begin{eqnarray}\label{731}
	\d^{ac}A&\ge& -2KA+\frac{2}{\b}\left\langle\nabla A,\nabla \ln w\right\rangle-\frac{2}{\b}G\left\langle\nabla \Phi,\nabla \ln w\right\rangle\nonumber\\
	&&+Cw^{-1}\Phi G^2+\Delta^{ac} \Phi\cdot G+2\left\langle\frac{\nabla \Phi}{\Phi} ,\nabla A \right\rangle-2A\cdot\frac{|\nabla \Phi|^2}{\Phi^2}-\frac{f(L\e)}{\e}A.
	\end{eqnarray}	
	Cauchy-Schwarz inequality and basic inequality again give	
	\begin{eqnarray}\label{7cs1}
	-\frac{2}{\b}G\left\langle\nabla \Phi,\nabla \ln w\right\rangle	&\ge&-\frac{C}{2}\Phi G\frac{|\nabla w|^2}{w^2}-\frac{2}{\b^2 C}G\frac{|\nabla \Phi|^2}{\Phi}\nonumber\\
	&\ge&-\frac{C}{2}w^{-1}\Phi G^2-\frac{2}{\b^2 C}A\frac{|\nabla \Phi|^2}{\Phi^2}.
	\end{eqnarray}
	Combining \eqref{731} and \eqref{7cs1}, we get
	\begin{eqnarray}
		\d^{ac}A+\left\langle\nabla A,\nabla B\right\rangle\ge \frac{C}{2}w^{-1}\Phi G^2-\left(2K+\frac{f(L\e)}{\e}\right)A+\Delta^{ac} \Phi\cdot G-2\left(1+\frac{1}{\b^2 C}\right)A\frac{|\nabla \Phi|^2}{\Phi^2},\nonumber
	\end{eqnarray}
	where $B=-2\ln \Phi-\frac{2}{\b}\ln w$.
	
	Notice that $A(x)$ achieves its strict maximum in $B(x_0,\frac{5}{4}R)$ in the sense of Lemma \ref{MMP}. Therefore, by Lemma \ref{MMP}, we have a sequence $\{x_j\}_{j=j_0}^{\infty}\subset B(x_0,\frac{3}{2}R)$ such that
	\begin{equation}
	A_j=A(x_j)\ge \sup\limits_{B(x_0,\frac{3}{2}R)}A-\frac{1}{j}>0
	\end{equation}
	and 
	\begin{eqnarray}\label{7331}
	\frac{1}{j}\ge\frac{C}{2}\e^{\b}\Phi G^2(x_j)+\Delta^{ac} \Phi\cdot G(x_j)-\left(2K+\frac{f(L\e)}{\e}\right) A_j-2\left(1+\frac{1}{\b^2 C}\right)\frac{|\nabla \Phi|^2}{\Phi^2}A_j.
	\end{eqnarray}
	Multiplying $\Phi(x_j)$ on both sides of \eqref{7331},
	 letting $j\to \infty$ and using property of $\Phi$, we obtain	
	\begin{equation}
	\sup\limits_{B(x_0,\frac{3}{2}R)}A\le C(N,\Lambda)\e^{-\b}\left(K+\frac{1}{R^2}+\frac{f(L\e)}{\e}\right),
	\end{equation}
	which yields
	\begin{equation}\label{7361}
	\sup\limits_{B(x_0,R)}(u+\e)^{-\b}\left(\frac{|\nabla u|^2}{(u+\e)^2}+\frac{f(u)}{u}\right)\le C(N,\Lambda)\e^{-\b}\left(K+\frac{1}{R^2}+\frac{f(L\e)}{\e}\right).
	\end{equation}
We completed the proof.
\end{proof}

\begin{proof}[Proof of Theorem \ref{S} in the case $N\in[1,4)$]
	By Remark \ref{7r2}, we choose 
	\begin{equation}
		\b_0=\frac{1}{2}\left(\rho(N,\Lambda)+\min\left(\b,\frac{2}{N-2},\Lambda-\frac{N-1}{N-2}\right)\right),
	\end{equation}
	and we know that Theorem \ref{main2} ($N<4$) can be read as
	\begin{equation}\label{73600}
	\sup\limits_{B(x_0,R)}(u+\e)^{-\b_0}\left(\frac{|\nabla u|^2}{(u+\e)^2}+\frac{f(u)}{u}\right)\le C(N,\Lambda)\e^{-\b_0}\left(K+\frac{1}{R^2}+\frac{f(L\e)}{\e}\right).
\end{equation}
By same reason as case $N\ge 4$, we can choose $\e>0$ such that $\frac{f(L\e)}{L\e}=K+R^{-2}$. For $\mu$-a.e. $y\in B(x_0,R)$, if $u(y)\le\e$, \eqref{73600} gives
	\begin{equation}\label{7371}
	\frac{f(u)}{u}(y)\le C(N,\l,\Lambda)\left(K+\frac{1}{R^2}\right),
	\end{equation}
	if $u(y)>\e$, then
	\begin{equation}
	\frac{f(u)}{u^{1+\b_0}}(y)\le C(N,\l,\Lambda)\frac{f(L\e)}{(L\e)^{1+\b_0}},
	\end{equation}
	which yields that $u(y)\le C(N,
	\l,\Lambda,\b)\e$ because $t^{-1-\b_0}f(t)$ is $\max(1,h)$-inverse bounded. By \eqref{73600} again, we also get \eqref{7371}. Hence $$\sup\limits_{B(x_0,R)}\frac{f(u)}{u}\le C(N,\l,\Lambda)\left(K+\frac{1}{R^2}\right).$$ By Lemma \ref{l31}, we know that universal boundeness estimate $\kappa$-implies logarithmic gradient estimate for $\kappa>1$\footnote{One can get universal boundedness estimate on $B(x_0,\frac{3}{2}R)$ by modifying the property of cut-off function and choose $\kappa=\frac{3}{2}$.}, and we  completed the proof.
	
\end{proof}

\section{Further results for Lichnerowicz type equation}

In this section, we focus on the Lichnerowicz type equation on \RCDn\,spaces with $K\ge 0$ and $N\ge1$ and aim to strengthen the result stated in Theorem \ref{main1}. Specifically, we will prove the logarithmic gradient estimate of Lichnerowicz-type equation and consequently obtain a Liouville theorem for it even when $K>0$. We write the equation as follows.
\begin{equation}\label{leq}
	\d u+au-bu^{\sigma}+cu^{\tau}=0,
\end{equation}
where $\sigma>1$, $\tau<1$ and $a\ge0,b\ge0,c\ge0$.

As a special case of Lemma \ref{k4}, we immediately get the following lemma.
\begin{lem}\label{k8}
	Let $u$ be a positive solution of \eqref{leq} on $B(x_0,2R)$. The function $w$ is defined by equation \eqref{tran} and $F$ is its first kind auxiliary function as \eqref{F} with $\e=\g=d=0$. Then 
	\begin{eqnarray}\label{81}
	\Delta F&\ge&2\left(\frac{1}{\b}-1\right)\left\langle\nabla F,\nabla \ln w\right\rangle+\left(\frac{2}{N}\left(1+\frac{1}{\b}\right)^2-2\right)F^2+\frac{2\b^2}{N}\left(a-bu^{\sigma-1}+cu^{\tau-1}\right)^2\nonumber\\
	&&+\left(\left(\frac{4}{N}(1+\b)+2\right)\left(a-bu^{\sigma-1}+cu^{\tau-1}\right)-2\left(a-b\sigma u^{\sigma-1}+c\tau u^{\tau-1}\right)-2K\right)F\nonumber
	\end{eqnarray}
	on $B(x_0,\frac{3}{2}R)$.	
\end{lem}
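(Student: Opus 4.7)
The plan is to view Lemma \ref{k8} as a direct specialization of Lemma \ref{k4} to the nonlinearity $f(u) = au - bu^{\sigma} + cu^{\tau}$, followed by a bookkeeping step that replaces the generic $\frac{|\nabla w|^2}{w^2}$ quantity by $F$ itself. Since $\epsilon = \gamma = d = 0$ in the definition \eqref{F} of the first kind auxiliary function, we have the identity $F = |\nabla w|^2 / w^2$, which is the key simplification that lets every occurrence of $|\nabla w|^2/w^2$ on the right-hand side of \eqref{51} collapse into a factor of $F$.

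First I would compute the two relevant quantities for this particular $f$:
\begin{equation*}
\frac{f(u)}{u} = a - b u^{\sigma-1} + c u^{\tau-1}, \qquad f'(u) = a - b\sigma u^{\sigma-1} + c\tau u^{\tau-1}.
\end{equation*}
Then I would substitute these into the conclusion of Lemma \ref{k4}, which applies because $u$ is a positive weak solution of \eqref{GE} on $B(x_0,2R)$ with the nonlinearity $f(u) = au - bu^{\sigma} + cu^{\tau}$ (continuity/regularity in $u > 0$ is automatic since $\sigma, \tau$ are real exponents and $u$ is bounded below by a positive constant on compact subsets). The hypothesis $N \in [1,\infty)$ and $K \ge 0$ of a \RCDn\ space is inherited from the ambient assumption, so Lemma \ref{k4} applies on $B(x_0, \tfrac{3}{2} R)$.

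Next I would use $\frac{|\nabla w|^2}{w^2} = F$ to rewrite
\begin{equation*}
\left(\left(\tfrac{4}{N}(1+\beta)+2\right)\tfrac{f(u)}{u} - 2 f'(u)\right)\frac{|\nabla w|^2}{w^2} = \left(\left(\tfrac{4}{N}(1+\beta)+2\right)(a - b u^{\sigma-1} + c u^{\tau-1}) - 2(a - b\sigma u^{\sigma-1} + c\tau u^{\tau-1})\right) F,
\end{equation*}
and finally absorb the $-2KF$ term from Lemma \ref{k4} into the linear-in-$F$ coefficient. After these substitutions the inequality lines up with \eqref{81} exactly.

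There is no real analytic obstacle here; the content is entirely that of Lemma \ref{k4}, which already packaged the Bochner inequality, the chain/Leibniz rules, and the Randon–Nikodym decomposition. The only thing to be careful about is that Lemma \ref{k4} was stated under the convention that $f$ is $C^2$ on $(0,\infty)$ with the solution positive, which is still valid here for each fixed sign pattern and magnitude of $a,b,c$ (non-negativity of $f$ is not required), so no additional hypotheses are needed. Thus the lemma follows immediately upon substitution and regrouping.
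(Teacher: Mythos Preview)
Your proposal is correct and follows exactly the paper's own approach: the paper simply states that Lemma \ref{k8} is obtained ``as a special case of Lemma \ref{k4}'' and gives no further argument. Your write-up just makes explicit the substitution $f(u)=au-bu^{\sigma}+cu^{\tau}$, the identities $\frac{f(u)}{u}=a-bu^{\sigma-1}+cu^{\tau-1}$ and $f'(u)=a-b\sigma u^{\sigma-1}+c\tau u^{\tau-1}$, and the fact that $F=|\nabla w|^2/w^2$ when $\e=\g=d=0$, together with absorbing $-2KF$ into the linear-in-$F$ term; this is precisely what the paper intends.
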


By Lemma \ref{k8}, we readily get the following elliptic inequality, which is crucial for deriving the Liouville theorem of equation \eqref{leq} on negatively curved spaces.

\begin{lem}\label{k9}
	Let $u$ be a positive solution of \eqref{leq} on $B(x_0,2R)$, $w$ be defined by \eqref{tran} and $F$ be its first kind auxiliary function as \eqref{F} with $\e=\g=d=0$. Then for any $\delta\in(0,\frac{4}{\sqrt{N}(\sqrt{N}-1)})$\footnote{Here, we set $\frac{4}{\sqrt{N}(\sqrt{N}-1)}=\infty$ when $N=1$.}, there exists $\b=\b(N,\sigma,\delta)>0$ such that 
	\begin{eqnarray}\label{91}
	\Delta F\ge2\left(\frac{1}{\b}-1\right)\left\langle\nabla F,\nabla \ln w\right\rangle+MF^2+\left(L_{abc}(N,\sigma,\tau,\delta)-2K\right)F\nonumber
	\end{eqnarray}
	on $B(x_0,\frac{3}{2}R)$, where $M=M(N,\sigma)>0$ and
	\begin{eqnarray}
		L_{abc}(N,\sigma,\tau,\delta)=\begin{cases}
		2(\sigma-1)a\qquad\qquad\qquad\,\,\text{if} \qquad \sigma\in(1,1+\frac{2}{\sqrt{N}(\sqrt{N}-1)})\\
		\delta a \qquad\qquad\qquad\qquad\quad\,\,\,\, \text{if}\qquad\sigma\in[1+\frac{2}{\sqrt{N}(\sqrt{N}-1)},\infty).
		\end{cases}
	\end{eqnarray}		
\end{lem}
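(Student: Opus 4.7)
The starting point is Lemma \ref{k8}. My plan is to recast its linear-in-$F$ coefficient via a sharper identity for $[{\tfrac{4}{N}(1+\b)+2}]P(u)-2Q(u)$ (where $P(u)=a-bu^{\sigma-1}+cu^{\tau-1}$ and $Q(u)=a-\sigma bu^{\sigma-1}+\tau cu^{\tau-1}$), then perform AM-GM/complete-the-square against the already-present quadratic $\tfrac{2\b^2}{N}P(u)^2$, with $\b$ as a free parameter depending on $N$, $\sigma$ and $\delta$. The key identity is
\begin{equation*}
\Bigl[\tfrac{4}{N}(1+\b)+2\Bigr]P(u)-2Q(u)=\tfrac{4(1+\b)}{N}P(u)+2(\sigma-1)bu^{\sigma-1}+2(1-\tau)cu^{\tau-1}.
\end{equation*}
Since $\tau<1$ and $c\ge 0$, the last term contributes a non-negative multiple of $F$ that I simply discard. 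Writing $\alpha:=\tfrac{2}{N}(1+\tfrac{1}{\b})^2-2$, Lemma \ref{k8} then becomes
\begin{equation*}
\d F\ge 2\bigl(\tfrac{1}{\b}-1\bigr)\langle\nabla F,\nabla\ln w\rangle+\alpha F^2+\tfrac{4(1+\b)}{N}P(u)F+2(\sigma-1)bu^{\sigma-1}F+\tfrac{2\b^2}{N}P(u)^2-2KF.
\end{equation*}

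I would then restrict to $\b\in(0,\tfrac{1}{\sqrt{N}-1})$, which forces $\alpha>0$, split $\alpha F^2=MF^2+(\alpha-M)F^2$, and try to show that the expression
$(\alpha-M)F^2+\tfrac{4(1+\b)}{N}P(u)F+2(\sigma-1)bu^{\sigma-1}F+\tfrac{2\b^2}{N}P(u)^2\ge L_{abc}F$ for a suitable $M>0$. \textbf{Case 1:} subtract $L_{abc}F=2(\sigma-1)aF$ and introduce $s:=a-bu^{\sigma-1}$; the remainder is a quadratic form $(\alpha-M)F^2+\eta sF+\tfrac{2\b^2}{N}s^2$ with $\eta:=\tfrac{4(1+\b)}{N}-2(\sigma-1)$, plus the manifestly non-negative pieces coming from the $cu^{\tau-1}$-contribution inside $P(u)^2$. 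Non-negativity of this form in $(F,s)$ is equivalent to the discriminant condition $M\le\alpha-\tfrac{N\eta^2}{8\b^2}$, which after a short algebraic simplification turns into
\begin{equation*}
N(\sigma-1)^2-4(1+\b)(\sigma-1)+4\b^2<0.
\end{equation*}
Viewed as a quadratic in $\b$ (or in $\sigma-1$), this is solvable within $\b\in(0,\tfrac{1}{\sqrt{N}-1})$ exactly when $\sigma-1<\tfrac{2}{\sqrt{N}(\sqrt{N}-1)}$, which is the Case 1 hypothesis; the critical value corresponds to the coalescence of the two roots at $\b=\tfrac{1}{\sqrt{N}-1}$.

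\textbf{Case 2:} because $\sigma-1\ge\tfrac{2}{\sqrt{N}(\sqrt{N}-1)}$, there is room to choose $\b\in(0,\tfrac{1}{\sqrt{N}-1})$ simultaneously guaranteeing $\alpha>0$ and $2(\sigma-1)-\tfrac{4(1+\b)}{N}\ge\delta$ for any preassigned $\delta\in(0,\tfrac{4}{\sqrt{N}(\sqrt{N}-1)})$; this is precisely the reason the upper bound on $\delta$ takes the form it does. For such a $\b$ the term $\tfrac{4(1+\b)}{N}P(u)F+\tfrac{2\b^2}{N}P(u)^2$, viewed as a quadratic in $P(u)$, is bounded below (after expending a controlled fraction of $\alpha F^2$ via AM-GM) by a non-positive multiple of $F^2$ that is absorbed into $(\alpha-M)F^2$, so the linear term $2(\sigma-1)bu^{\sigma-1}F$ alone yields the requested $\delta F$ contribution, and $M>0$ is taken to be the leftover positive fraction of $\alpha$.

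The main obstacle is the Case 1 discriminant analysis, where one must explicitly exhibit $\b=\b(N,\sigma)\in(0,\tfrac{1}{\sqrt{N}-1})$ rendering $N(\sigma-1)^2-4(1+\b)(\sigma-1)+4\b^2$ strictly negative while simultaneously securing $M>0$. The sharpness of the threshold $\sigma=1+\tfrac{2}{\sqrt{N}(\sqrt{N}-1)}$ and of the upper bound $\tfrac{4}{\sqrt{N}(\sqrt{N}-1)}$ on $\delta$ both trace back to this root-coalescence phenomenon at the boundary $\b=\tfrac{1}{\sqrt{N}-1}$ of the admissible range for $\b$.
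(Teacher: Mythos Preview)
Your Case 1 is correct and in fact tidier than the paper's treatment: the paper splits this regime into two sub-cases ($\sigma\le 1+\tfrac2N$ handled by an AM--GM between $(\alpha-2)F^2$ and $\tfrac{2\b^2}{N}P(u)^2$, and $1+\tfrac2N<\sigma<1+\tfrac{2}{\sqrt N(\sqrt N-1)}$ handled by the explicit choice $\b=\tfrac N2(\sigma-1)-1$ with the $P(u)^2$ term simply dropped), whereas your single discriminant condition $N(\sigma-1)^2-4(1+\b)(\sigma-1)+4\b^2<0$ unifies both. One caveat: your phrase ``manifestly non-negative pieces coming from the $cu^{\tau-1}$-contribution'' glosses over the cross term $\tfrac{4\b^2}{N}s\,cu^{\tau-1}$, which can be negative; however, if one minimizes the full quadratic in $s$ first, the residual $t$-dependence comes out as $2(\sigma-\tau)\,cu^{\tau-1}F\ge0$, so the conclusion survives. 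Your ``exactly when'' claim is also slightly loose (for some $N$ and $\sigma$ just above the threshold the discriminant inequality is still solvable), but only the ``if'' direction is needed.

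Your Case 2, however, has a genuine gap. You propose to absorb $\tfrac{4(1+\b)}{N}P(u)F+\tfrac{2\b^2}{N}P(u)^2$ entirely into $(\alpha-M)F^2$ and then claim that the leftover $2(\sigma-1)bu^{\sigma-1}F$ supplies the required $\delta F$. But $bu^{\sigma-1}$ has no uniform positive lower bound (indeed $b$ may be zero), so this term cannot produce a constant multiple of $F$. Relatedly, your condition $2(\sigma-1)-\tfrac{4(1+\b)}{N}\ge\delta$ does not yield the full range $\delta\in(0,\tfrac{4}{\sqrt N(\sqrt N-1)})$ at the threshold value of $\sigma$; there it only gives $\delta<\tfrac{4}{N(\sqrt N-1)}$.

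The paper's route in this regime is to extract the constant from the $a$-term rather than the $bu^{\sigma-1}$-term. Writing $\tfrac{4(1+\b)}{N}P(u)=\tfrac{4(1+\b)}{N}a+\bigl[-\tfrac{4(1+\b)}{N}\bigr]bu^{\sigma-1}+\tfrac{4(1+\b)}{N}cu^{\tau-1}$, one chooses $\b$ close enough to $\tfrac{1}{\sqrt N-1}$ that $\tfrac{4(1+\b)}{N}>\delta$; since $\sigma-1\ge\tfrac{2}{\sqrt N(\sqrt N-1)}$ forces $2(\sigma-1)-\tfrac{4(1+\b)}{N}\ge0$, the $bu^{\sigma-1}$ coefficient is non-negative and can be dropped, and the $P(u)^2$ term is not even needed. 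This gives $L_{abc}\ge\delta a$ (the paper's printed ``$\delta$'' in this case is a typo for ``$\delta a$''), and the upper bound $\tfrac{4}{\sqrt N(\sqrt N-1)}$ on $\delta$ now arises naturally from the constraint $\b<\tfrac{1}{\sqrt N-1}$ needed for $\alpha>0$.
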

\begin{rmk}
\rm From the following proof, we can see that if $\sigma\in(1,1+\frac{2}{\sqrt{N}(\sqrt{N}-1)})$, then $\b$ does not depend on $\delta$; if $\sigma\in[1+\frac{2}{\sqrt{N}(\sqrt{N}-1)},\infty)$ and $\delta\to \frac{4}{\sqrt{N}(\sqrt{N}-1)}$, then $\b(N,\sigma,\delta)\to \frac{1}{\sqrt{N}-1}$.
\end{rmk}
\begin{proof}
	When $\sigma\in(1,1+\frac{2}{N}]$,  then combining Lemma \ref{k8} and basic inequality give
	\begin{eqnarray}\label{9111}
	\Delta F&\ge&2\left(\frac{1}{\b}-1\right)\left\langle\nabla F,\nabla \ln w\right\rangle+2F^2+\frac{4}{N}\sqrt{(1+\b)^2-2N\b^2}|a-bu^{\sigma-1}+cu^{\tau-1}|\nonumber\\
	&&+\left(\left(\frac{4}{N}(1+\b)+2\right)\left(a-bu^{\sigma-1}+cu^{\tau-1}\right)-2\left(a-b\sigma u^{\sigma-1}+c\tau u^{\tau-1}\right)-2K\right)F\nonumber\\
	&&
	\end{eqnarray}
	on $B(x_0,\frac{3}{2}R)$.	We fix $\b\in(0,\frac{1}{\sqrt{2N}-1})$ such that 
	\begin{equation}\label{9112}
	\frac{4}{N}(1+\b)-\frac{4}{N}\sqrt{(1+\b)^2-2N\b^2}=2(\sigma-1).
	\end{equation}
	Combining \eqref{9111} and \eqref{9112}, we obtain the desired result because $\tau<1$.
	
When $\sigma\in(1+\frac{2}{N},1+\frac{2}{\sqrt{N}(\sqrt{N}-1)})$, we can choose $\b=\frac{N}{2}(\sigma-1)-1$, then Lemma \ref{k8} yields
	\begin{eqnarray}\label{92}
	\Delta F\ge2\left(\frac{1}{\b}-1\right)\left\langle\nabla F,\nabla \ln w\right\rangle+\left(\frac{2}{N}\left(1+\frac{1}{\b}\right)^2-2\right)F^2+\left(2(\sigma-1)a-2K\right)F\nonumber
	\end{eqnarray}
	on $B(x_0,\frac{3}{2}R)$.
		
When $\sigma\in[1+\frac{2}{\sqrt{N}(\sqrt{N}-1)},\infty)$, for any $\delta\in(0,\frac{4}{\sqrt{N}(\sqrt{N}-1)})$, we can choose $\b\in(\frac{1}{N},\frac{1}{\sqrt{N}-1})$ such that $\frac{4}{N}(1+\b)>\delta$.	Then using Lemma \ref{k8} also derives the desired result.

\end{proof}

At the aid of Lemma \ref{k9}, with same argument as before, we have
\begin{thm}\label{t8}
Let $u\in\W(B(x_0,2R))\cap L^{\infty}(B(x_0,2R))$ be a positive solution of \eqref{leq} on $B(x_0,2R)$, then for any $\delta\in(0,\frac{4}{\sqrt{N}(\sqrt{N}-1)})$, we have
\begin{equation}
	\sup\limits_{B(x_0,R)} \frac{|\nabla u|^2}{u^2}\le C\left(\frac{1}{R^2}+\frac{\sqrt{K}}{R}+\left(2K-L_{abc}(N,\sigma,\tau,\delta)\right)^{+}\right),
\end{equation}
 where $C=C(N,\sigma)>0$	and $L_{abc}(N,\sigma,\tau,\delta)$ is defined as in Lemma \ref{k9}.
\end{thm}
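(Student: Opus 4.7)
The plan is to run the Bernstein/weak-maximum-principle template used in the proof of Theorem \ref{main}, but with the elliptic inequality from Lemma \ref{k9} in place of Lemma \ref{k3}. Since $w = u^{-\b}$ with $\b = \b(N,\sigma,\delta)$ supplied by Lemma \ref{k9}, the first kind auxiliary function reduces to $F = \b^2 |\nabla u|^2/u^2$, so bounding $F$ on $B(x_0,R)$ is the same as bounding $|\nabla u|^2/u^2$ up to a constant depending only on $N$ and $\sigma$ (the parameter $\delta$ will be fixed throughout).

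Concretely, the steps are as follows. First, for fixed $\delta\in(0,4/(\sqrt{N}(\sqrt{N}-1)))$, invoke Lemma \ref{k9} to get positive $\b=\b(N,\sigma,\delta)$ and $M=M(N,\sigma)$ with
\[
\d F \ge 2\Bigl(\tfrac{1}{\b}-1\Bigr)\langle \nabla F,\nabla\ln w\rangle + MF^2 + (L_{abc}-2K)F
\]
on $B(x_0,\tfrac{3}{2}R)$. Second, form $A:=\Phi F$ with the cut-off $\Phi$ from Lemma \ref{cut off}, choosing $\alpha=M_1/(2M_2)$ (where $M_1=\sup_{B(x_0,R)}A$, $M_2=\sup_{B(x_0,3R/2)}A$) so that the supremum is attained strictly inside the support of $\Phi=1$; by Radon--Nikodym decomposition $\d^{s}A\ge 0$, while the absolutely continuous part expands by chain rule exactly as in Section~4. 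Third, absorb the cross term $-2(\tfrac{1}{\b}-1)F\langle\nabla\Phi,\nabla\ln w\rangle$ via Cauchy--Schwarz, losing half of the $MF^2$ term and picking up a benign multiple of $|\nabla\Phi|^2\Phi^{-1}F$. Fourth, apply the weak maximum principle Lemma \ref{MMP} on $\Omega=B(x_0,\tfrac{3}{2}R)$, $U=B(x_0,\tfrac{5}{4}R)$ to obtain a sequence $\{x_j\}$ realizing the approximate supremum of $A$; multiply the resulting pointwise inequality through by $\Phi(x_j)$ and use the bounds $|\nabla\Phi|^2/\Phi\le C/R^2$ and $\d^{ac}\Phi\ge -C(\sqrt{K}/R+1/R^{2})$ from Lemma \ref{cut off}. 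Passing $j\to\infty$ yields
\[
\tfrac{M}{2}\sup_{B(x_0,3R/2)} A^2 \le \Bigl[C(N,\sigma)\bigl(\tfrac{\sqrt{K}}{R}+\tfrac{1}{R^2}\bigr)+(2K-L_{abc})^{+}\Bigr]\sup_{B(x_0,3R/2)} A,
\]
which, after dividing and recalling $F=\b^2|\nabla u|^2/u^2$, gives the claimed estimate.

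The main (minor) obstacle is the correct bookkeeping of the linear term $(L_{abc}-2K)F$: when $L_{abc}\ge 2K$ it is non-negative and can simply be discarded, but when $L_{abc}<2K$ it must be moved to the forcing side of the inequality, where, together with $\Phi\le 1$, it contributes exactly $(2K-L_{abc})A$ and so produces the $(2K-L_{abc})^{+}$ in the final bound. Everything else is a mechanical repetition of the argument of Section~4, which is why the paper states that the result follows ``with same argument as before''.
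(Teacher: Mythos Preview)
Your proposal is correct and follows essentially the same route as the paper: invoke Lemma \ref{k9}, set $A=\Phi F$, absorb the cross term via Cauchy--Schwarz, apply Lemma \ref{MMP}, and read off the bound. The only detail you gloss over that the paper makes explicit is why the final constant $C$ is independent of $\delta$: the paper observes (from the proof of Lemma \ref{k9}) that either $\b=\b(N,\sigma)$ already, or else $\b>1/N$ uniformly in $\delta$, so in both cases $1/\b^2$ and $1/\b^4$ are bounded by constants depending only on $N,\sigma$.
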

\begin{proof}

We choose fixed $\b=\b(N,\Lambda,\delta)>0$ and $M=M(N,\sigma)>0$ such that Lemma \ref{k9} holds.  
Let us define $A:=\Phi F$ as the auxiliary function, where $F$ is  same as in Lemma \ref{k8}, and $\Phi$ is an undetermined cut-off function as defined in Lemma \ref{cut off} with $\alpha=\frac{M_1}{2M_2}$, where $M_1=\sup\limits_{B(x_0,R)} A>0$ (or the proof is finished) and $M_2=\sup\limits_{B(x_0,\frac{3}{2}R)} A$. By Radon--Nikodym decomposition, we see
$\Delta^{s}A\ge 0$
on $B(x_0,\frac{3R}{2})$, because the singular part of $\d \Phi$ and $\d F$ are non-negative by Lemma \ref{cut off} and Lemma \ref{k9}.
By $\Phi\ge\alpha$ and chain rule, we have
\begin{eqnarray}\label{9aeq}
\d^{ac}A=\Delta^{ac} \Phi\cdot F+\d^{ac}F\cdot \Phi+2\left\langle\frac{\nabla \Phi}{\Phi} ,\nabla A \right\rangle-2A\cdot\frac{|\nabla \Phi|^2}{\Phi^2}.
\end{eqnarray}
By Lemma \ref{k9} and chain rule  again, we see
\begin{eqnarray}\label{912}
\d^{ac}A&\ge& 2\left(\frac{1}{\b}-1\right)\left\langle\nabla A,\nabla \ln w\right\rangle+M\Phi F^2-\left(2K-L_{abc}(N,\sigma,\tau,\delta)\right)^{+}F\nonumber\\
&&-2\left(\frac{1}{\b}-1\right)F\left\langle\nabla \Phi,\nabla \ln w\right\rangle+\Delta^{ac} \Phi\cdot F+2\left\langle\frac{\nabla \Phi}{\Phi} ,\nabla A \right\rangle-2A\frac{|\nabla \Phi|^2}{\Phi^2}.
\end{eqnarray}
Cauchy-Schwarz inequality and basic equality provide
\begin{eqnarray}\label{9bb}
-2\left(\frac{1}{\b}-1\right)F\left\langle\nabla \Phi,\nabla \ln w\right\rangle\ge-\left(\frac{1}{\b}-1\right)^2\frac{2}{M}\frac{|\nabla\Phi|^2}{\Phi}F-\frac{M}{2}\Phi F^2.
\end{eqnarray}
Then substituting \eqref{9bb} into \eqref{912} yields
\begin{eqnarray}
&&\d^{ac}A+\left\langle\nabla A,\nabla B\right\rangle\nonumber\\
&&\ge \Delta^{ac} \Phi\cdot F+\frac{M}{2}\Phi F^2-2\left(\frac{1}{M}\left(\frac{1}{\b}-1\right)^2+1\right)\frac{|\nabla \Phi|^2}{\Phi^2}A-\left(2K-L_{abc}(N,\sigma,\tau,\delta)\right)^{+}F,\nonumber
\end{eqnarray}
where $B=2\left(1-\frac{1}{\b}\right)\ln w-2\ln \Phi$. Notice that $A$ achieves its strict maximum in $B(x_0,\frac{5}{4}R)$ in the sense of Lemma \ref{MMP}. Therefore, by Lemma \ref{MMP}, we have a sequence $\{x_j\}_{j=j_0}^{\infty}\subset B(x_0,\frac{3}{2}R)$ such that 
\begin{equation}
A_j=A(x_j)\ge \sup\limits_{B(x_0,\frac{3}{2}R)}A-\frac{1}{j}>0
\end{equation}
and 
\begin{equation}\label{943}
\frac{1}{j}\ge \Delta^{ac} \Phi\cdot F_j+\frac{M}{2}\Phi F^2_j-2\left(\frac{1}{M}\left(\frac{1}{\b}-1\right)^2+1\right)\frac{|\nabla \Phi|^2}{\Phi^2}A_j-\left(2K-L_{abc}(N,\sigma,\tau,\delta)\right)^{+}F_j,
\end{equation}
where $F_j=F(x_j)$. Multiplying $\Phi(x_j)$ on both sides of \eqref{943}, using the property of $\Phi$ and letting $j\to\infty$ in \eqref{943}, we obtain
\begin{equation}
\sup\limits_{B(x_0,\frac{3}{2}R)}A\le C(N,\sigma)\left(\left(\frac{1}{M}\left(\frac{1}{\b}-1\right)^2+1\right)\left(\frac{\sqrt{K}}{R}+\frac{1}{R^2}\right)+\left(2K-L_{abc}(N,\sigma,\tau,\delta)\right)^{+}\right),\nonumber
\end{equation}
which implies
\begin{equation}\label{919}
\sup\limits_{B(x_0,R)}\frac{|\nabla u|^2}{u^2}\le \frac{C(N,\sigma)}{\b^2}\left(\left(\frac{1}{\b^2}+1\right)\left(\frac{\sqrt{K}}{R}+\frac{1}{R^2}\right)+\left(2K-L_{abc}(N,\sigma,\tau,\delta)\right)^{+}\right).
\end{equation}
Notice that $\b$ only depends on $N,\sigma$ if $\sigma<1+\frac{2}{\sqrt{N}(\sqrt{N}-1)}$ and $\b>\frac{1}{N}$ if $\sigma\ge1+\frac{2}{\sqrt{N}(\sqrt{N}-1)}$ (see proof of Lemma \ref{k9}). So we finish the proof.

\end{proof}
For any $a\ge0$, $n\ge 1$ and $\sigma>1$, we define
\begin{eqnarray}\label{con}
	L(n,a,\sigma)=\begin{cases}
		0\qquad\qquad\qquad\quad\text{if}\quad a=0,\\
		(\sigma-1)a\qquad\quad\,\,\,\,\text{if}\quad a>0,\sigma\in(1,1+\frac{2}{\sqrt{n}(\sqrt{n}-1)}),\\
		\frac{2a}{\sqrt{n}(\sqrt{n}-1)}\qquad\quad\,\,\,\,\text{if}\quad a>0,\sigma\in[1+\frac{2}{\sqrt{n}(\sqrt{n}-1)},\infty).
	\end{cases}
\end{eqnarray}
The following Liouville theorem is direct corollary of Theorem \ref{t8}.

\begin{thm}\label{t81}
	Let $u$ be a positive solution of \eqref{leq} on \RCDn\,metric measure space $X$ and $L(N,a,\sigma)$ is defined by \eqref{con}, if $K\le L(N,a,\sigma)$, then $u$ is constant.
\end{thm}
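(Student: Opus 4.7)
The plan is to derive Theorem \ref{t81} directly from the logarithmic gradient estimate in Theorem \ref{t8} by letting $R\to\infty$, and to check that the curvature threshold $L(N,a,\sigma)$ defined in \eqref{con} is exactly calibrated so that the residual term $(2K-L_{abc}(N,\sigma,\tau,\delta))^+$ can be forced to vanish, possibly after optimising in $\delta$.

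Fix an arbitrary basepoint $x_0\in X$. For any admissible $\delta\in(0,\tfrac{4}{\sqrt{N}(\sqrt{N}-1)})$ and every $R>0$, Theorem \ref{t8} yields
\[
\sup_{B(x_0,R)}\frac{|\nabla u|^2}{u^2}\le C(N,\sigma)\Bigl(\tfrac{1}{R^2}+\tfrac{\sqrt{K}}{R}+\bigl(2K-L_{abc}(N,\sigma,\tau,\delta)\bigr)^+\Bigr).
\]
I would then split into the three regimes used to define $L(N,a,\sigma)$. If $a=0$, the hypothesis forces $K=0$ and since $L_{abc}\ge 0$ the positive part already vanishes. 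If $a>0$ and $\sigma\in(1,1+\tfrac{2}{\sqrt{N}(\sqrt{N}-1)})$, Lemma \ref{k9} gives $L_{abc}=2(\sigma-1)a=2L(N,a,\sigma)$, so $K\le L(N,a,\sigma)$ immediately yields $(2K-L_{abc})^+=0$. If $a>0$ and $\sigma\ge 1+\tfrac{2}{\sqrt{N}(\sqrt{N}-1)}$, then $L_{abc}=\delta$ is a free parameter; since $K\le L(N,a,\sigma)=\tfrac{2}{\sqrt{N}(\sqrt{N}-1)}$, I can choose $\delta\in(2K,\tfrac{4}{\sqrt{N}(\sqrt{N}-1)})$ whenever $K<\tfrac{2}{\sqrt{N}(\sqrt{N}-1)}$, again killing the residual term.

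Sending $R\to\infty$ in each subcase then gives $|\nabla u|/u\equiv 0$ $\mu$-a.e.\ on $X$. Since $X$ is proper (hence connected components are open and closed) and $u$ is continuous and positive, vanishing of the minimal weak upper gradient forces $u$ to be constant on $X$.

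The one delicate step is the endpoint $K=\tfrac{2}{\sqrt{N}(\sqrt{N}-1)}$ in the third regime, where every admissible $\delta$ satisfies $\delta<2K$. Here I would invoke the remark after Lemma \ref{k9}: the parameter $\b(N,\sigma,\delta)$ remains bounded away from zero as $\delta\uparrow\tfrac{4}{\sqrt{N}(\sqrt{N}-1)}$, so the constant $C(N,\sigma)/\b^2$ appearing in \eqref{919} stays uniformly bounded in $\delta$. Consequently, after first sending $R\to\infty$, I can take $\delta\uparrow 2K$ and use the bounded prefactor to conclude $(2K-\delta)^+\to 0$ forces $|\nabla u|\equiv 0$ as well. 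This limiting argument at the critical curvature value is the only mildly non-trivial step; everything else is a direct application of Theorem \ref{t8}.
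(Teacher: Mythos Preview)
Your proposal is correct and follows exactly the route the paper intends: Theorem \ref{t81} is stated there as a ``direct corollary'' of Theorem \ref{t8} with no further argument, and letting $R\to\infty$ after matching $2K$ against $L_{abc}$ in each regime is precisely that corollary spelled out. Your treatment of the endpoint $K=\tfrac{2}{\sqrt{N}(\sqrt{N}-1)}$ via the uniform lower bound $\b>\tfrac{1}{N}$ from the proof of Theorem \ref{t8} (so that the prefactor in \eqref{919} is $\delta$-independent) is the right way to close that case, and the paper leaves this implicit.

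One small imprecision: you invoke continuity of $u$ to pass from $|\nabla u|\equiv 0$ to constancy, but Definition \ref{ws} does not assume continuity. The cleaner justification is that \RCD\ spaces are geodesic (hence connected), and on such a space a function in $W^{1,2}_{loc}$ with vanishing minimal weak upper gradient is $\mu$-a.e.\ constant directly from the test-plan definition; no pointwise regularity is needed.
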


\begin{rmk}
	\rm From Theorem \ref{t81}, we immediately see that the global classical positive solution of Allen-Cahn equation ($a=b=1,c=0,\sigma=3$) or static Fisher-KPP equation ($a=b=1,c=0,\sigma=2$) on hyperplane $\mathbb{H}^2$ must be constant $1$. We  suspect that the constant $L(N,a,\sigma)$ may not be optimal in the above Liouville theorem. If one improves this bound, then one obtains the Liouville theorem for \eqref{leq} on more negatively curved spaces.
\end{rmk}

\section{Appendix}\label{S9}
We provide an illustrative example to demonstrate the optimality of Theorem \ref{sthm} in certain cases. For \RCDO\,spaces, Theorem \ref{sthm} is confirmed as optimal through the classical singular solutions of Lane-Emden equation on the Euclidean spaces (see \cite{GS,PQS,SZ}). Therefore, we examine the solutions of \eqref{LE} on a \RCDn\,space with $K>0$. For any fixed real numbers $N>3$ and $K>0$, we will construct a \RCDn\,space and a global positive solution of \eqref{LE} on the space. We then verify that this solution satisfies the equality case in Theorem \ref{sthm} if we ignore the sharpness of the constant $C(N,\alpha)$.

Define 
\begin{equation}
	n=\sup\{k\in\mathbb{N}:k<N\},\nonumber
\end{equation}
\begin{equation}
	\Gamma(n,\a)=\frac{1}{2}\left(n-2-\frac{4}{\a-1}\right),
\end{equation}
and for an undetermined $\mu>0$, we define
\begin{equation}
	f(x)=\Gamma(n,\a)\ln(\mu^2+|x|^2).
\end{equation}

The metric measure space is defined as $(\mathbb{R}^n,d,e^{-f}dx)$, where $d$ is the standard metric in $\mathbb{R}^n$ and $dx$ is Lebesgue measure. In this smooth metric measure space, we know that 
\begin{equation}
	\d=\d_{\mathbb{R}^n}-\left\langle\nabla f,\nabla\cdot\right\rangle.
\end{equation}

For any $\a\in(1,\frac{N+2}{N-2})$ (hence $\a\in(1,\frac{n+2}{n-2})$), one can verify that
\begin{equation}
	u(x)=\left(\frac{\mu}{\mu^2+|x|^2}\sqrt{\frac{4n}{\a-1}}\right)^{\frac{2}{\a-1}}\nonumber
\end{equation}
satisfies the equation
\begin{equation}
	\d u+u^{\alpha}=0 \qquad\text{in $\mathbb{R}^n$}.
\end{equation}

Then, we calculate the Bakry-\'{E}mery Ricci curvature of the aforementioned metric measure space:

\begin{eqnarray}
	Ric^{N}_{f}:&=&Ric+\nabla^2f-\frac{1}{N-n}df\otimes df\nonumber\\
	&=&\frac{\Gamma(n,\a)}{(\mu^2+|x|^2)^{2}}\left[2\delta_{ij}(\mu^2+|x|^2)-4\left(1+\frac{\Gamma(n,\a)}{N-n}\right)x_i x_j\right]dx^{i}dx^{j}.
\end{eqnarray}
A basic computation shows that the maximal lower bound of $Ric^{N}_{f}$ is given by the following formula (notice that $\Gamma(n,\a)<0$ at present case).
\begin{eqnarray}
	\min_{x\in\mathbb{R}^n}mRic^N_f(x)=
	\begin{cases}
		\frac{2\Gamma(n,\a)}{\mu^2}\qquad\qquad\qquad\qquad\quad\text{if $N-n\ge-\frac{2}{3}\Gamma(n,\a)$}\nonumber\\
		-\frac{\Gamma(n,\a)(N-n+2\Gamma(n,\a))^2}{4(N-n)(N-n+\Gamma(n,\a))\mu^2}\qquad\,\,\text{if $N-n\le-\frac{2}{3}\Gamma(n,\a)$},
	\end{cases}
\end{eqnarray}
where $mRic^N_f(x)$ represents the minimal eigenvalue of $Ric^N_f(x)$. 
Now, we can choose $\mu>0$ such that 
\begin{equation}
	-K=	\min_{x\in\mathbb{R}^n}mRic^N_f.
\end{equation}
 According to \cite[Proposition 4.21]{EKS}, we can conclude that this metric measure space is an \RCDn\,space but not a ${\rm RCD^{*}}(-(K-\e),N)$ space for any $\e>0$. And for the solution $u$, we clearly see 
\begin{equation}\label{99}
	\sup\limits_{\mathbb{R}^n}\left(\frac{|\nabla u|^2}{u^2}+u^{\a-1}\right)=C(N,\a)K,
\end{equation}
where $C(N,\alpha)$ is a positive numbere that depends on $N,\a$. Therefore, \eqref{99} illustates that Theorem \ref{sthm} is optimal.

\begin{rmk}
	The above example does not illustrate that Theorem \ref{sthm} is optimal for the local solution of the Lane-Emden equation, which is valid for same equation on Euclidean spaces. We also suspect that Theorem \ref{sthm} is  optimal at $N\in[1,3]$ case.
\end{rmk}

\section*{Acknowledgements}We are  grateful to the referees for the careful reading of our manuscript and for the insightful comments and valuable suggestions, which have greatly helped us to improve the presentation and clarity of our results.  The author is
supported by Scientific Research Startup Project of Jiangsu Normal University (Project
No: 24XFRS051), the General Program of Basic Scientific Research in Institutions of
Higher Education of Jiangsu Province (Grant No: 25KJB110002) and National Natural
Science Foundation of China (Grant No: 12526552).

\section*{Statement}
The corresponding author declares that there is no conflict of interest.
Data sharing is not applicable to this article, as no datasets were generated or analyzed during the current study.

\end{document}